\numberwithin{equation}{section}
\newtheorem{theorem}{Theorem}[section]
\newtheorem{lemma}[theorem]{Lemma}
\newtheorem{proposition}[theorem]{Proposition}
\newtheorem{remark}[theorem]{Remark}
\newtheoremstyle{example}{}{}{}{}{}{}{}{}
\newtheorem*{definition*}{Definition}
\newtheorem*{theorem*}{Theorem}
\newtheorem*{lemma*}{Lemma}
\newtheorem*{proposition*}{Proposition}
\newtheorem{example}[theorem]{Example}
\newtheorem{definition}[theorem]{Definition}
\newcommand{\diffto}{\xrightarrow{\raisebox{-0.2 em}[0pt][0pt]{\smash{\ensuremath{\sim}}}}}
\newcommand{\rmap}{\longrightarrow}
\newcommand{\Iff}{\Longleftrightarrow}
\newcommand{\acts}{\curvearrowright}
\newcommand{\la}{\langle}
\newcommand{\ra}{\rangle}
\newcommand{\id}{\operatorname{id}}
\newcommand{\pr}{\operatorname{pr}}
\newcommand{\X}{\mathfrak{X}}
\newcommand{\bd}{\partial}
\newcommand{\dd}{\mathrm{d}}
\newcommand{\R}{\mathbb{R}}
\newcommand{\C}{\mathbb{C}}
\newcommand{\abk}{\langle\cdot,\cdot\rangle}
\newcommand{\sbk}{[\cdot,\cdot]}
\begin{document}
	
\title{Coregular submanifolds and Poisson submersions}
\author[L. Brambila]{Lilian Cordeiro Brambila}
\address{Departamento de Matem\'{a}tica, Universidade Federal do Paran\'{a}, DAMAT-CT, UTFPR}
\email{lilianc@utfpr.edu.br}
\author[P. Frejlich]{Pedro Frejlich}
\address{Departamento de Matem\'atica Pura e Aplicada, IME, UFRGS}
\email{frejlich.math@gmail.com}
\author[D. Mart\'inez Torres]{David Mart\'{i}nez Torres}
\address{Departamento de Matem\'{a}tica Aplicada, ETSAM, UPM}
\email{df.mtorres@upm.es}
\begin{abstract}
We analyze \emph{submersions with Poisson fibres}. These are submersions whose total space carries a Poisson structure, on which the ambient Poisson structure pulls back, as a Dirac structure, to Poisson structures on each individual fibre. Our ``Poisson-Dirac viewpoint’’ is prompted by natural examples of Poisson submersions with Poisson fibers --- in toric geometry and Poisson-Lie groups ---  whose analysis was not possible using the existing tools in the Poisson literature.

The first part of the paper studies the Poisson-Dirac perspective of inducing Poisson structures on submanifolds. This is a rich landscape, in which subtle behaviours abound --- as illustrated by a surprising ``jumping phenomenon'' concerning the complex relation between the induced and the ambient symplectic foliations, which we discovered here. These pathologies, however, are absent from the well-behaved and abundant class of \emph{coregular} submanifolds, with which we are mostly concerned here.

The second part of the paper studies Poisson submersions with Poisson fibres --- the natural Poisson generalization of flat symplectic bundles. These Poisson submersions have coregular Poisson-Dirac fibres, and behave functorially with respect to such submanifolds. We discuss the subtle collective behavior of the Poisson fibers of such Poisson fibrations, and explain their relation to pencils of Poisson structures.

The third and final part applies the theory developed to Poisson submersions with Poisson fibres which arise in Lie theory. We also show that such submersions are a convenient setting for the associated bundle construction, and we illustrate this by producing new Poisson structures with a finite number of symplectic leaves. 

Some of the points in the paper being fairly new, we illustrate the many fine issues that appear with an abundance of (counter-)examples.
\end{abstract}

\maketitle

\setcounter{tocdepth}{1}
 \tableofcontents

\section{Introduction}

The condition that a submanifold of a Poisson manifold  ``inherit'' a Poisson structure is somewhat subtle, since bivectors cannot be pulled back as such. However, we understand since the work of Ted Courant on Dirac structures \cite{Cou} that there is a \underline{canonical  candidate} to an ``induced Poisson structure". Indeed, a Poisson structure $\pi_M$ on $M$ may be regarded as a Dirac structure via its graph:
\[
\mathrm{Gr}(\pi_M) = \{ \pi_M^{\sharp}(\xi)+\xi  \ | \  \xi \in  T^*M\} \subset TM \oplus T^*M=\colon\mathbb{T}M
\]

It turns out that every submanifold $X$ inherits a canonical (pullback) Lagrangian family $i^!\mathrm{Gr}(\pi_M) \subset \mathbb{T}X$ (see Section \ref{sec : Poisson-Dirac submanifolds} for more details), and \underline{at most} one bivector $\pi_X \in \X^2(X)$ can exist on $X$ for which
\begin{align*}
 \mathrm{Gr}(\pi_X)=i^!\mathrm{Gr}(\pi_M).
\end{align*}
When that is the case, $\pi_X$ is automatically Poisson, and we say that $(X,\pi_X)$ is a {\bf Poisson-Dirac submanifold} of $(M,\pi_M)$. This notion was first introduced in \cite[Section 8]{CF}.

This is the most general recipe  to ``induce''
Poisson structures on submanifolds, but it is rather difficult to check, in that the existence and smoothness of $\pi_X$ are \emph{assumed} as opposed to \emph{deduced} from some more straightforwardly verifiable condition, and deciding whether $\pi_X$ is smooth  or not may be technically challenging in practice.

In the literature concerning submanifolds of a Poisson manifold, it was (mistakenly) thought that the induced symplectic foliation on a Poisson-Dirac submanifold would behave well with respect to the ambient symplectic foliation --- in the sense that the leaves of the Poisson-Dirac submanifold would arise as (clean) intersections of the submanifold with the ambient leaves. For example, Poisson-Dirac submanifolds and their clean (or split) counterparts are confused in:
\begin{itemize}
\item The foundational paper \cite{CF} (cf. \cite[Definition 4]{CF} and \cite[Corollary 10]{CF});
\item In \cite{Za} (cf. \cite[Definition 5]{Za} and \cite[Definition 6]{Za});
\item In \cite{Paula} (see \cite[Lemma A.1]{Paula});
\item In \cite{Arathoon} (see \cite[Theorem 2.1]{Arathoon}).
\end{itemize}

It turns out, however, that the symplectic foliation of the Poisson-Dirac submanifold can be \ul{wildly} different from that of the ambient manifold. In fact, the first contribution in this note is the discovery of a
\vspace{0.1cm}
\begin{mdframed}
{\bf Jumping phenomenon: }A leaf of a Poisson-Dirac submanifold need not lie inside a leaf of the ambient manifold;
\end{mdframed}
\vspace{0.1cm}
\noindent see Examples \ref{ex : intersections not manifolds} and \ref{ex : jumping phenomenon}.\\

One example of a condition on a submanifold $X$ of a Poisson manifold $(M,\pi_M)$, which turns out to be easy to check, and ensures that $X$ has the structure of clean Poisson-Dirac submanifold, is to require that $X$ be {\bf split} --- that is, that there exist a splitting $TM|_X=TX\oplus E$ with $\pi_M|_X = \pi_X$ modulo $\Gamma(\wedge^2E)$. This condition turns out to be equivalent to demanding that Hamiltonian flows of $(X,\pi_X)$ be the restriction of Hamiltonian flows of $(M,\pi_M)$ --- otherwise said, split manifolds are those Poisson-Dirac submanifolds in which the jumping phenomenon alluded to above is ruled out by fiat.

{\bf Coregular} Poisson-Dirac submanifolds, first introduced by Courant \cite[Theorem 3.2.1]{Cou}, are a particular case of split submanifolds. A submanifold $X$ is a coregular Poisson-Dirac submanifold if $TX$ and the image $\pi_M^{\sharp}(N^*X)$ of the conormal bundle of $X$  meet trivially, and their direct sum $TX \oplus \pi_M^{\sharp}(N^*X)$ is a vector subbundle of $TM|_X$. Said otherwise, these are split Poisson-Dirac submanifolds, in which the projection to the normal bundle of $X$
\begin{align*}
 & \mathrm{Q}_X \colon N^*X \to NX, & \mathrm{Q}_X(\xi) = \pi_M^{\sharp}(\xi)+TX
\end{align*}has constant rank.  Coregular submanifolds comprise, among others:
\begin{itemize}
    \item {\bf Poisson submanifolds:} submanifolds $X$ to which every Hamiltonian vector field of $(M,\pi_M)$ is tangent; that is,
    \begin{align*}
        \pi_M^{\sharp}(T^*M|_X) \subset TX ;
    \end{align*}
    \item {\bf Poisson transversals:} submanifolds $X$ which meet each leaf of $(M,\pi_M)$ transversally and symplectically; that is,
    \begin{align*}
        TM|_X = TX \oplus \pi_M^{\sharp}(N^*X);
    \end{align*}
    \item Any {\bf point} in a Poisson manifold.
\end{itemize}

We devote Section \ref{sec : Poisson-Dirac submanifolds} to the a detailed description of this whole hierarchy of regularity conditions
\begin{figure}[H]
 \centering
 \includegraphics[keepaspectratio=true,scale=0.8]{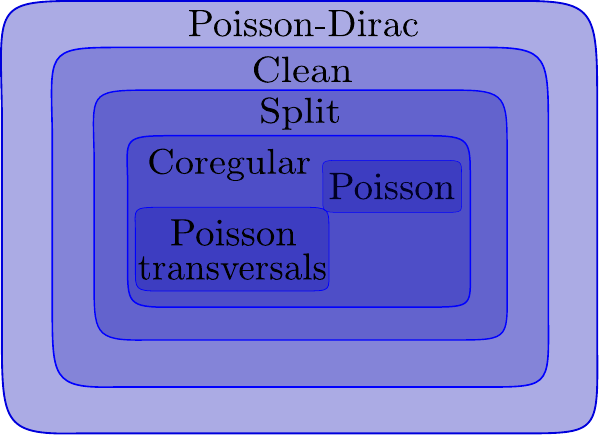}
 \caption{Hierarchy of induced Poisson structures}
\label{fig:hierarchyintro1}
\end{figure}
\noindent with a heavy emphasis on (counter-)examples. This complements (and in some cases, corrects) the endeavours in \cite{CF},  \cite{MeP}, \cite{PT1}, \cite{Xu}, and to some extent those in \cite{CZ}, \cite{Za} (which discuss distinguished submanifolds of Poisson manifolds which do not necessarily inherit Poisson structures). \\

 The main object of interest in the present paper are submersions
\begin{align*}
 p \colon (\Sigma,\pi_{\Sigma}) \rmap (M,\pi_M)
\end{align*}between Poisson manifolds. The natural ``compatibility'' condition one can impose is:
\begin{enumerate}[a)]
 \item that $p$ be a Poisson map.
\end{enumerate}
\begin{align*}
 & \{ f \circ p,g \circ p\}_{\Sigma} = \{f,g\}_M \circ p, & f,g \in C^{\infty}(M).
\end{align*}
This is the correct Poisson-theoretic notion to ensure that $p$ intertwines Hamiltonian flows of $f$ and $f \circ p$. Note, however, that this does not imply that $p$ intertwines the induced symplectic foliations
on the total space and base --- in the sense that the preimage of leaves is saturated. For example, symplectic realizations exist for any Poisson manifold \cite{Ka,We}, and those only intertwine symplectic foliations in this sense if $(M,\pi_M)$ is symplectic.\\

In light of the discussion on submanifolds, another natural ``compatibility'' condition one can impose is:

\begin{enumerate}[b)]
 \item that fibres of $p$ have an induced Poisson structure.
\end{enumerate}

\begin{definition*}
 A submersion $p \colon (\Sigma,\pi_{\Sigma}) \to M$ from a Poisson manifold $(\Sigma,\pi_{\Sigma})$ has {\bf Poisson fibres} if each of its fibres is a Poisson-Dirac submanifold.
\end{definition*}

Submersions with Poisson fibres encompass previously studied classes (such as vertical, coupling \cite{BF,Vo,Wade} and almost-coupling Poisson structures \cite{BF,Va}). All these cases assume a good collective behaviour of the Poisson-Dirac fibers --- local triviality or, at the very least, the existence of a compatible Ehresmann connection. Our Poisson-Dirac standpoint makes no such assumptions.

Of great importance to what follows is that \ul{Poisson submersions} with Poisson fibres
\[
p \colon (\Sigma,\pi_{\Sigma}) \rmap (M,\pi_M)
\]have in fact coregular fibres, and behave functorially in the sense below:

\begin{theorem*}
A Poisson submersion $p \colon (\Sigma,\pi_{\Sigma}) \to (M,\pi_M)$ has Poisson fibres exactly when each vertical tangent space inherits a Poisson structure. In that case, its fibres are coregular. 

Moreover, for a coregular Poisson-Dirac submanifold $Y \subset M$,
  \begin{enumerate}[a)]
   \item $X\colon=p^{-1}(Y) \subset \Sigma$ is a coregular Poisson-Dirac submanifold;
   \item $p \colon(X,\pi_X) \to (Y,\pi_Y)$ is a Poisson submersion with Poisson fibres.
  \end{enumerate}
\end{theorem*}

Such good functorial behavior of Poisson submersion with Poisson fibres, as opposed to general Poisson 
submersions, has implications regarding the symplectic foliations of the total space and base.

\begin{theorem*}
 A Poisson submersion with Poisson fibres $p\colon(\Sigma,\pi_{\Sigma}) \to (M,\pi_M)$ pulls symplectic leaves of 
 $M$ back to Poisson submanifolds of $\Sigma$, over which $p$ restricts to coupling Poisson submersions.
\end{theorem*}

 These results convey a clear picture of a Poisson submersion with Poisson fibres
\[
p\colon(\Sigma,\pi_{\Sigma}) \to (M,\pi_M):
\]it maps leaves of $\pi_{\Sigma}$ into leaves of $\pi_M$, in which case the ensuing restrictions between symplectic leaves
\[
p\colon\mathrm{S}_{\Sigma}(x) \to \mathrm{S}_{M}(p(x))
\]are \emph{flat symplectic bundles}. So, very much like a Poisson structure on a manifold makes precise the idea of a assembling symplectic manifolds, a Poisson submersion with Poisson fibres makes precise the idea of assembling flat symplectic bundles.

In fact, the good  behavior with respect to the symplectic foliations --- or even with respect to their distributions --- \ul{characterizes} Poisson submersion with Poisson fibres among Poisson submersions:
\begin{theorem*} The following statements for a Poisson submersion $p:(\Sigma,\pi_{\Sigma}) \to (M,\pi_M)$ are equivalent:
 \begin{enumerate}[(a)]
  \item It is a Poisson submersion with Poisson fibres.
  \item It maps symplectic leaves of $(\Sigma,\pi_{\Sigma})$ into symplectic leaves of $(M,\pi_M)$.
  \item Its differential maps the characteristic distribution of $(\Sigma,\pi_{\Sigma})$ into the characteristic distribution of $(M,\pi_M)$.
 \end{enumerate}
\end{theorem*}

 Poisson submersions with Poisson fibres have their own version of the jumping phenomenon. A Poisson submersion with Poisson fibres over a symplectic manifold (that is, a coupling) induces, under a mild completeness assumption, \emph{diffeomorphic} Poisson structures on its fibres. However:

 \vspace{0.5cm} 
 
\begin{mdframed}
\noindent {\bf Jumping phenomenon:}  If the base manifold of Poisson submersions with Poisson fibres is not symplectic, the Poisson diffeomorphism type of fibres may very well \ul{change} as we change symplectic leaves in the base --- remarkably, they need not even assemble into a vertical Poisson structure.
\end{mdframed}

\vspace{0.5cm} 

\noindent In fact, for a Poisson submersion with Poisson fibres $p\colon(\Sigma,\pi_{\Sigma}) \to (M,\pi_M)$
\vspace{-0.3cm} 
\begin{theorem*}
 The Poisson fibres assemble into a vertical Poisson structure $\pi_V \in \Gamma(\wedge^2V)$ exactly when a Poisson structure $\pi_H \in \X^2(\Sigma)$ exists, whose symplectic foliation arises from Hamiltonian flows of functions on $M$. When that is the case, $\pi_V$ and $\pi_H$ commute and split $\pi_{\Sigma}$:
 \begin{align*}
     & \pi_{\Sigma}=\pi_V+\pi_H, & [\pi_H,\pi_V]=0.
 \end{align*}
\end{theorem*}

\noindent We refer to such objects as {\bf orthogonal pencils}, to highlight the somewhat surprising fact that the Poisson structure on the total space arises from a pencil of Poisson structures.

Yet again, the landscape is fraught with subtleties (e.g., orthogonal pencils need not be almost-coupling), and we devote the bulk of the Section \ref{sec : coregular Poisson submersions} to lay out the foundational aspects of the theory of Poisson submersions with Poisson fibres, striving to offer as many examples as a clear picture requires. Just as for submanifolds, our analysis yields a whole hierarchy of Poisson submersions
\begin{figure}[H]
 \centering
 \includegraphics[keepaspectratio=true,scale=0.8]{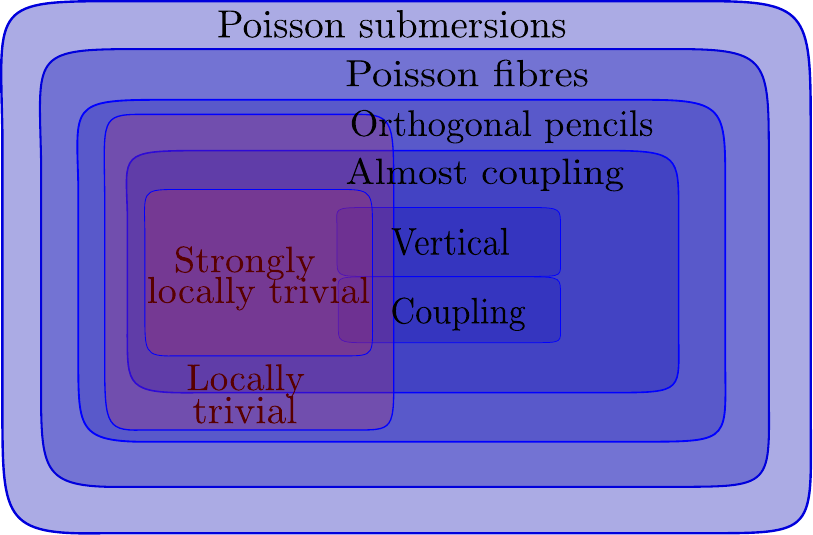}
 \caption{Hierarchy of Poisson submersions}
 \label{fig:hierarchyintro2}
\end{figure}

The remainder of the paper is devoted to applications and examples of Poisson submersions with Poisson fibres coming from Lie theory. The pointwise-to-global feature of coregular manifolds manifests itself in the following key:

\begin{lemma*}\normalfont
Let a Poisson-Lie group $(G,\pi_G)$ have a Poisson action on a Poisson manifold $(M,\pi_M)$. Then an orbit $X$ of $G \curvearrowright M$ is a coregular Poisson-Dirac submanifold iff $T_xX \subset (T_xM,\pi_{M,x})$ has an induced Poisson structure for some $x \in X$.
\end{lemma*}

Applied to the action of a complex vector space $A$ on a complex manifold $M$ by holomorphic transformations, there corresponds to each {\bf positive} bivector $\pi_A \in \wedge^2A$ --- that is, one whose leaves are K\"ahler manifolds --- an induced $A$-invariant Poisson  structure $\pi_M$ on $M$, such that every complex subspace of $A$ acts on
 $(M,\pi_M)$ by Poisson diffeomorphisms, and has coregular Poisson-Dirac orbits (Lemma \ref{lem : holomorphic actions}). One instance is the GIT presentation of a toric manifold $M_{\varDelta}$ --- a certain principal bundle $p\colon\Sigma_{\varDelta} \to M_{\varDelta}$ with structure group a complex torus, and total space $\Sigma_{\varDelta}$ an open set in $\mathbb{C}^d$, constructed out of a Delzant polytope $\varDelta$: 

\begin{proposition*}
Every positive bivector $\pi \in \wedge^2 \C^d$ turns the GIT presentation $p:\Sigma_{\varDelta} \to M_{\varDelta}$ of the toric variety $M_{\varDelta}$ into a Poisson submersion with Poisson fibres.
\end{proposition*}Following \cite{Ca}, we call those Poisson structures which arise from nondegenerate, positive bivectors (and which 
therefore have finitely many leaves) {\bf toric Poisson manifolds}. \\

The second class of examples concerns the ``standard'' (or Lu-Weinstein) Poisson-Lie group structure $\pi_G$ on a compact connected semisimple Lie group $G$ associated with a choice of maximal torus $T$ (and root ordering). This structure --- discovered in \cite{LW} --- descends to a ``standard'' Poisson structure $\pi_M$ on the manifold of full flags $M=G/T$, with finitely many leaves, the Bruhat cells:

\begin{proposition*}
 The quotient map $p:(G,\pi_G) \to (M,\pi_M)$ is a Poisson submersion with Poisson fibres, inducing the trivial Poisson structure on fibres.
\end{proposition*}

We conclude the paper with a discussion of the associated bundle construction in 
the context of Poisson submersions. Namely, a right principal $G$-bundle $p\colon P \to M$ 
equipped with a $G$-invariant Poisson structure $\pi_P$ determines a Poisson submersion
\[
 p \colon(P,\pi_P) \to (M,\pi_M).
\]If $G$ acts on the left of $(X,\pi_X)$ by Poisson diffeomorphisms, the associated bundle $\Sigma \colon=P \times_GX$ has an induced Poisson structure $\pi_{\Sigma}$, for which
\[
 p: (\Sigma,\pi_{\Sigma}) \to (M,\pi_M)
\]is again a Poisson submersion.
 We will see that good behavior of the principal Poisson submersion $p:(P,\pi_P) \to (M,\pi_M)$ is inherited by the associated bundle $p\colon (\Sigma,\pi_{\Sigma}) \to (M,\pi_M)$  (Lemma \ref{lem : associated inherits pencils}), but also that conditions can be imposed on $\pi_P$ and the action $G \curvearrowright (X,\pi_X)$ to ensure that the associated bundle behaves well even if the principal bundle itself does not (Lemma \ref{lem : self-made pencil}).

In the study of such ``Poisson associated bundles'', the need arises to impose some 
condition akin to local triviality. When the base of the submersion is symplectic, local triviality has a transparent, mandated meaning (see \cite{Fr}), but its meaning for general  Poisson submersions with Poisson fibres is less clear. For our purposes, it suffices to consider three notions, corresponding the the following local models of $\pi_{\Sigma}$: the product of Poisson structures on the base and on the fibre (\emph{strongly locally trivial}), its gauge-transform by a closed two-form (\emph{locally trivial}), and having the same singular foliation as those (\emph{locally trivial foliation}). The delicate issue will be that this hierarchy of local triviality notions will not always pass from principal to associated bundles. However:

 \begin{proposition*}
The associated bundle has locally trivial foliation, provided the orbits of $G \curvearrowright (X,\pi_X)$ lie inside symplectic leaves, and either:
\begin{enumerate}[a)]
\item The Poisson structures on the fibers of $p:P \to M$ are all trivial, or
\item The orbits of $G\curvearrowright X$ are isotropic submanifolds of the symplectic leaves of $(X,\pi_X)$.
\end{enumerate}
 \end{proposition*}
 
 \noindent In fact, in that case the leaf spaces of $(\Sigma,\pi_{\Sigma})$ and $(M,\pi_M) \times (X,\pi_X)$ are homeomorphic. This leads to the last application of the paper: the construction of Poisson submersions with Poisson fibres with finitely many leaves, using as models for base/fibres toric Poisson manifolds or manifolds of full flags.\\

\noindent {\bf Conventions} By a \emph{singular foliation} $\mathcal{S}$ on a smooth manifold $M$ we mean a partition of $M$ into initial, immersed submanifolds, in the sense of Stefan-Sussmann \cite{St,Su}. We note that all 
singular foliations in this paper will be the orbit partition of a Lie algebroid \cite{AS}. 

 A \emph{foliation} $\mathscr{F}$, tout court, is a singular foliation by equidimensional leaves (that is, an honest foliation in the usual sense). \emph{Submanifold} is to be understood as an embedded submanifold --- but in the Appendix we consider the case of embedded submanifolds whose connected components may have different dimensions. 

For a Poisson manifold $(M,\pi_M)$, we denote by $\mathrm{H}_f\colon=\pi_M^{\sharp}(\mathrm{d}f)$ the Hamiltonian vector field of a (possibly time-dependent) function $f$ on $M$, and by $\phi_{\mathrm{H}_f}^{t,s}$ its local flow. 

The symplectic leaf through $x \in M$ (the orbit through Hamiltonian local flows) is denoted by $\mathrm{S}_M(x)$, and $\mathcal{S}_M$ stands for the ensuing singular foliation of $M$. In all cases but one, the singular foliations considered in the paper will be of this kind (see e.g. \cite[Section 4.1]{CFM}). 

For references on the basics of Poisson geometry, with a viewpoint similar to the one espoused here, we also recommend \cite{MeD}, \cite{LPV} and \cite{MeP}. For those on Dirac structures, see e.g. \cite{Bu} and \cite{Gu}, and for Lie group structure theory, see \cite{Kn}. \\

\noindent {\bf Terminology} The terminology concerning submanifolds of Poisson manifolds is quite inconsistent, and we have adopted our own. One reason for this inconsistency was the hitherto unknown jumping phenomenon we discuss in the paper.  For instance:

--- ``Poisson-Dirac submanifolds" in our sense are those in \cite[Proposition 6]{CF} and \cite[Definition 6]{Za};

--- What we call ``clean Poisson-Dirac submanifolds" are \emph{also} called ``Poisson-Dirac submanifolds" in the same works \cite[Definition 4]{CF} and \cite[Definition 5]{Za} --- the jumping phenomenon was missing from the picture, so both classes were mistakenly conflated;

--- What we call ``split Poisson-Dirac submanifolds" were called ``Poisson-Dirac submanifolds with a Dirac splitting" in \cite[Proposition 7]{CF}, but merely called "Poisson-Dirac submanifolds" in \cite{LPV} and in \cite{MeP};

--- What we call ``coregular submanifold" appears without name in \cite[Theorem 3.2.1]{Cou} and as ``Poisson-Dirac submanifolds of constant rank" in \cite[p. 119]{CF}, and in \cite{CZ} as ``pre-Poisson Poisson-Dirac submanifold". Our terminology, however, agrees with that of \cite{CFM} and that of \cite{Geudens} in the case where the induced Dirac structure is in fact Poisson. Our choice of names seeks to accurately reflect the properties which characterize each of the classes in the hierarchy of Poisson-Dirac submanifolds.\\

\noindent {\bf Acknowledgements.} LB  was partially supported by CAPES 001. DMT was 
partially supported by FAPERJ E‐26/202.923/2015, FAPERJ E-26/010.001249/2016, FAPERJ  E‐26/202.908/2018, and CNPQ 304049/2018-2.

\section{Submanifolds}\label{sec : Poisson-Dirac submanifolds}

Recall that the \emph{generalized tangent bundle} $\mathbb{T}M \colon=TM \oplus T^*M$ carries the canonical symmetric pairing
\begin{align}\label{eq : symmetric bilinear pairing}
 & \abk \colon \mathbb{T}M \times_M \mathbb{T}M, & \langle u+\xi,v+\eta\rangle \colon = \iota_v\xi+\iota_u\eta,
\end{align}and the Dorfman bracket on the space of sections:
\begin{align}\label{eq : Dorfman}
 & \sbk \colon \Gamma(\mathbb{T}M) \times \Gamma(\mathbb{T}M) \rmap \Gamma(\mathbb{T}M), & [u+\xi,v+\eta] \colon =[u,v]+\mathscr{L}_u\eta-\iota_v\dd\xi.
\end{align}
A subspace $L_M \subset \mathbb{T}M$ is a {\bf Lagrangian family} if it meets each fibre $\mathbb{T}_xM$ in a Lagrangian vector subspace. No continuity is assumed for the map $x \mapsto L_{M,x}$; for example,

\begin{example}\label{ex : discontinuous Lagrangian family on R}
    The subspace
    \begin{align*}
    & L \subset \mathbb{T}\mathbb{R}, & L_t = \begin{cases}
        T_t\mathbb{R}, & \text{if} \ t \neq 0,\\
        T^*_0\mathbb{R}, & \text{if} \ t = 0
    \end{cases}
\end{align*}is a discontinuous Lagrangian family on the real line. 
\end{example}

We say that a Lagrangian family is {\bf smooth} if $L_M$ is a (smooth) subbundle. For example, given a two-form $\omega \in \Omega^2(M)$, a subbundle $E \subset TM$ or a bivector $\pi \in \X^2(M)$, the Lagrangian families
\[
\mathrm{Gr}(\omega) = \{u+\iota_u\omega \ | \ u \in TM\}, \qquad \mathrm{Gr}(E) = E \oplus E^{\circ}, \qquad \mathrm{Gr}(\pi) = \{\pi^{\sharp}(\xi)+\xi \ | \ \xi \in T^*M\}
\]are all smooth, and we refer to them as the {\bf graphs} of $\omega$, $E$ and $\pi$, respectively.

A {\bf Dirac structure} is a smooth Lagrangian family whose space of sections $\Gamma(L)$ is involutive with respect to the Dorfman bracket. Graphs corresponding to two-forms are Dirac structures iff the two-form is closed; similarly, the graph of a subbundle $E$ is a Dirac structure iff $E$ is the tangent bundle to a foliation, and the graph of a bivector is a Dirac structure iff the bivector is Poisson ---  equivalently, $L$ is the graph of a bivector exactly when $L$ meets the tangent bundle trivially:
\begin{align*}
    L \cap TM = 0.
\end{align*}

If $M$ is equipped with a Dirac structure $L_M$, and $f:N \to M$ is any smooth map, there is an induced {\bf pullback} Lagrangian family
\begin{align*}
 f^!(L_M):=\{u+f^*(\xi) \in \mathbb{T}N \ | \ f_*(u)+\xi \in L_M\}.
\end{align*}
Such a Lagrangian family may fail to be smooth:
\begin{example}
    For example, if $L_M$ is the Poisson structure $\pi_M = x_1\tfrac{\partial}{\partial x_1}\wedge \tfrac{\partial}{\partial x_2}$ on $M=\mathbb{R}^2$, and $i:\mathbb{R} \to M$ is $i(t) = (t,0)$, then $i^!(L_M)$ is the discontinuous Lagrangian family of Example \ref{ex : discontinuous Lagrangian family on R}.
\end{example}

However, if the pullback Lagrangian family $f^!(L_N)$ happens to be smooth, then it is automatically a Dirac structure \cite[Proposition 5.6]{Bu}, which we refer to as a {\bf pullback} Dirac structure.

In light of the discussion above, there is a canonical candidate to an \emph{induced} Dirac structure on a submanifold $X$ of a Dirac manifold $(M,L_M)$:\\

\begin{mdframed}
    A Dirac structure $L_M$ on $M$ {\bf induces} a Dirac structure $L_X$ on a submanifold $i:X \to M$ if the Lagrangian family $i^!(L_M)$ is smooth, in which case we call $i^!(L_M)$ the {\bf induced Dirac structure} on $X$.
\end{mdframed}

\begin{example}\label{ex : induced from foliation is foliation}\normalfont
 If $\mathscr{F}$ is a foliation on $M$, then $\mathrm{Gr}(\mathscr{F}) \colon=T\mathscr{F} \oplus N^*\mathscr{F}$ is a Dirac structure on $M$. In fact, a Dirac structure $L_M$ is of the form $\mathrm{Gr}(\mathscr{F})$ some foliation $\mathscr{F}$ on $M$ exactly when $\pr_{T}(L_M) = \pr_{T^*}(L_M)^{\circ}$ as subsets of $TM$ --- from which it follows by lower semicontinuity of $\mathrm{rank}\pr_{T}(L_M)$, and upper semicontinuity of $\mathrm{rank}\pr_{T^*}(L_M)^{\circ}$, that $\pr_{T}(L_M)$ is an (involutive) subbundle of $TM$. Moreover, if $i \colon X \to M$ is a submanifold, then
 \begin{align*}
  i^!\mathrm{Gr}(\mathscr{F})_x = i_*^{-1}(T_x\mathscr{F}) \oplus i^*(N^*_x\mathscr{F}) = i^*(N^*_x\mathscr{F})^{\circ} \oplus i^*(N^*_x\mathscr{F})
 \end{align*}for $x \in X$ shows that, if $i^!\mathrm{Gr}(\mathscr{F})$ is a Dirac structure on $X$, it must correspond to a foliation $\mathscr{F}_X$ on $X$. Note that $\mathscr{F}_X$ is nothing but the intersection of the ambient foliation $\mathscr{F}$ with $X$.
\end{example}

\subsection{Poisson-Dirac submanifolds}

Of special interest in this note is the case in which a Poisson structure induces, in the sense above, a Poisson structure on a submanifold:

\begin{definition}
 A {\bf Poisson-Dirac} submanifold $i:X \to M$ of a Poisson manifold
 $(M,\pi_M)$ is one in which $i^!\mathrm{Gr}(\pi_M)$ is smooth (and hence Dirac), and $i^!\mathrm{Gr}(\pi_M)$ meets $TX$ trivially. 
\end{definition}
That is: a Poisson-Dirac submanifold is one which inherits a Dirac structure which is Poisson.

\begin{example}\label{ex: vector space}\normalfont
 If $A$ is a vector space, any constant bivector $\pi_A \in \wedge^2A$ is Poisson. A linear subspace $B \subset A$
 inherits a (constant) Dirac structure $i^!\mathrm{Gr}(\pi_A) \subset B \oplus B^*$, and this is Poisson exactly when
 \begin{align}\label{eq : pointwise induced Poisson}
  \pi_A^{\sharp}(B^{\circ}) \cap B = 0.
 \end{align}In that case, for each $\xi \in B^*$, the element $\pi_B^{\sharp}(\xi)$ is given by 
 $\pi_A^{\sharp}(\widetilde{\xi})$, where $\widetilde{\xi} \in A^*$ is any element such that $\xi = \widetilde{\xi}|_B$
 and $\pi_A^{\sharp}(\widetilde{\xi}) \in B$.
 \end{example}
 
\begin{lemma}\label{lem : basic induced}
If $(X,\pi_X)$ is a Poisson-Dirac submanifold of $(M,\pi_M)$, and
\begin{align*}
    & \mathrm{S}_X(x) \subset X, & \mathrm{S}_M(x) \subset M,
\end{align*}denote, respectively, the symplectic leaves of $(X,\pi_X)$ and of $(M,\pi_M)$ passing through $x \in X$, then
 \begin{enumerate}[a)]
  \item $T_x\mathrm{S}_X(x) = T_xX \cap T_x\mathrm{S}_M(x)$;
  \item $\mathrm{S}_X(x)$ contains every Hamiltonian curve of $(M,\pi_M)$ which starts at $x$ and stays inside $X$.
 \end{enumerate}
\end{lemma}
\begin{proof}
 The tangent space $T_x\mathrm{S}_X(x)$ is the image of $\pi_X^{\sharp} \colon T^*_xX \to T_xX$, which, by definition of $\pi_X$, consists of those $\pi_M^{\sharp}(\xi)$ which are tangent to $X$ at $x$, and this proves a). This in turn implies that if $f \in C^{\infty}(I \times M)$ is a function whose Hamiltonian flow $\phi_{\mathrm{H}_f}^{t,0}$ is such that $\phi_{\mathrm{H}_f}^{t,0}(x) \in X$, then $f_X \colon =(\id,i)^*(f) \in C^{\infty}(I \times X)$ satisfies $\phi_{\mathrm{H}_{f_X}}^{t,0}(x) = \phi_{\mathrm{H}_f}^{t,0}(x)$, which proves b).
\end{proof}

There is an important subtlety concerning Poisson-Dirac submanifolds. Namely: if $X$ inherits a Poisson structure $\pi_X$ from $(M,\pi_M)$, there exist two induced partitions of $X$: one by the leaves $\mathrm{S}_X(x)$ of $\pi_X$, and the other by the subsets $X \cap \mathrm{S}_M(x)$. In general, however, the latter partition can be wildly misbehaved, as the next example illustrates.

\begin{example}\label{ex : intersections not manifolds}\normalfont \emph{An example of a Poisson-Dirac submanifold in which the partition $X \cap \mathrm{S}_M$ is not by smooth manifolds.} Let $i$ be the embedding
 \begin{align*}
 & i \colon \R^2 \rmap (\R^4,\pi=\tfrac{\partial}{\partial x_1} \wedge \tfrac{\partial}{\partial x_2}+x_3\tfrac{\partial}{\partial x_3} \wedge \tfrac{\partial}{\partial x_4}), & i(x,y)=(x,y,f(x,y)^2,f(x,y)^2),
\end{align*}where $f \in C^{\infty}(\R^2)$ is any smooth function. We claim that $i^!\mathrm{Gr}(\pi) = \mathrm{Gr}(\tfrac{\partial}{\partial x} \wedge \tfrac{\partial}{\partial y})$. Indeed, let $Z \subset \mathbb{R}^4$ denote the subset where $x_3=0$, and let $F \colon =f^{-1}(0) \subset \mathbb{R}^2$ its preimage under $i$. Observe that, in the complement of $Z$, the given Poisson structure is symplectic, corresponding to the closed two-form
\[
\omega = \dd x_2 \wedge \dd x_1 +\dd x_4 \wedge \tfrac{\dd x_3}{x_3} \in \Omega^2(\mathbb{R}^4 \diagdown Z).
\]
However, the pullback (as a Dirac structure) of the graph of a closed two-form coincides with the graph of the pullback form:
\[
i^!\mathrm{Gr}(\omega) = \mathrm{Gr}(i^*(\omega)) = \mathrm{Gr}(\dd x_2 \wedge \dd x_1)|_{\mathbb{R}^2 \diagdown F} = \mathrm{Gr}(\nu)|_{\mathbb{R}^2 \diagdown F},
\]where $\nu = \partial{x_1} \wedge \partial_{x_2} \in \X^2(\mathbb{R}^2)$.

On the other hand, for $(x,y) \in F$, we have that
\[
\mathrm{Gr}(\pi)_{i(x,y)} = \mathrm{Gr}(\nu)_{(x,y)} \times T^*_{(0,0)}\mathbb{R}^2
\]whereas
\begin{align*}
& i_{*,(x,y)} \colon T_{(x,y)}\mathbb{R}^2 \to T_{(x,y)}\mathbb{R}^2 \times T_{(0,0)}\mathbb{R}^2, & i_{*,(x,y)}(u) = (u,0),\\& i^*_{(x,y)} \colon T^*_{(x,y)}\mathbb{R}^2 \times T^*_{(0,0)}\mathbb{R}^2 \to T^*_{(x,y)}\mathbb{R}^2, & i^*_{(x,y)}(\xi,\eta) = \xi
\end{align*}reduce to the canonical injection and projection in the first factor. Therefore,
\begin{align*}
i^!\mathrm{Gr}(\pi)_{(x,y)} & = \{ u + i^*(\xi,\eta) \ | \ i_*(u)+(\xi,\eta) \in \mathrm{Gr}(\pi)_{i(x,y)}\}\\
& = \{ u + \xi \ | \ (u,0)+(\xi,\eta) \in \mathrm{Gr}(\pi)_{i(x,y)}\}\\
& = \{ u + \xi \ | \ (u,0)=\pi_{i(x,y)}(\xi,\eta)\}\\
& = \{ u + \xi \ | \ u=\nu_{i(x,y)}^{\sharp}(\xi)\}\\
& = \mathrm{Gr}(\nu)_{(x,y)}.
\end{align*} 

There are three important takeaways from this example:
\begin{enumerate}[a) ]
    \item Any closed subset $F \subset \R^2$ arises as the preimage of a suitable smooth function $f$ \cite[Section 2, Chapter 2]{Hirsch}. Therefore, the intersection of the embedding with a symplectic leaf can be very ill-behaved.
    \item Illustrating our previous point, if we take for $f$ the smooth function
    \begin{align*}
  f(x,y) = \begin{cases}
            e^{-\tfrac{1}{x^2}}\left(y-\sin(\tfrac{1}{x})\right), & x\neq 0;\\
            0 & x=0,
           \end{cases}
 \end{align*}then $F \subset \R^2$ is a typical example of a connected topological space which fails to be path-connected. (So, in particular, it is \emph{not} a manifold!)
    
    \noindent Nevertheless, that $X$ meets symplectic foliation in a rather pathological way does \emph{not} prevent the existence of a Poisson-Dirac structure on $X$.
    
    \item Hamiltonian flows of the Poisson-Dirac structure need \emph{not} lie in a single leaf of the ambient manifold. Indeed, any non-zero function with non-trivial zero locus is in fact a counterexample to the widely believed claim that the symplectic foliation of a Poisson-Dirac submanifold arises as the intersection of the ambient symplectic foliation with the submanifold. We call this a \underline{jumping phenomenon} to highlight the counterintuitive but important fact that the Hamiltonian flow
\[
 t \mapsto \phi^{t,0}_{\mathrm{H}_{f_X}}(x)
\]of a function $f_X \in C^{\infty}(X)$ may ``jump" between different leaves of the ambient manifold.
\end{enumerate}

In the concrete example b) above, something more striking occurs:
because $X=i(\mathbb{R}^2)$ is \emph{symplectic}, any two points are connected by a Hamiltonian path; however, because $F$ (which $i$ maps into a singular leaf) is connected but \emph{not} path connected, there exist $p_0,p_1 \in F$ which cannot possibly be endpoints of a Hamiltonian path in the ambient manifold $\mathbb{R}^4$ which lies in $X$.
\end{example}

Because a general Poisson-Dirac submanifold may intersect the ambient symplectic foliation in a poorly behaved fashion, the following language is in order: given an arbitrary subset $Y$ of a smooth manifold $M$, we shall say that two points $y_0,y_1$ in $Y$ lie in the same \emph{smooth path-connected component} if a smooth curve $c \colon I \to M$ exists, such that, for some $\epsilon>0$,
\begin{align*}
 c|_{[0,\epsilon]} = y_0, \qquad\qquad c(I) \subset Y, \qquad\qquad c|_{[1-\epsilon,1]} = y_1,
\end{align*}This is an equivalence relation on $Y$, and we denote by $\langle\langle Y \rangle\rangle_y$ the equivalence class of $y \in Y$. In this language, Lemma \ref{lem : basic induced} asserts that the partition of $X$ given by
\begin{align}\label{eq : ancillary partition}
 \mathrm{S}'_X(x) \colon =\langle\langle X \cap \mathrm{S}_M(x) \rangle\rangle_x
\end{align}\emph{refines} the partition $\mathrm{S}_X$ of $X$ given by the symplectic leaves of the induced Poisson structure, because any smooth path inside an ambient symplectic leaf is in fact a Hamiltonian path. Even when the partition $\mathrm{S}'_X$ consists exclusively of smooth manifolds, it may still be a strict refinement of $\mathrm{S}_X$:

\begin{example}\label{ex : jumping phenomenon}\normalfont
 The embedding
\begin{align*}
 & i \colon \R^2 \rmap (\R^4,\pi=\tfrac{\partial}{\partial x_1} \wedge \tfrac{\partial}{\partial x_2}+x_3\tfrac{\partial}{\partial x_3} \wedge \tfrac{\partial}{\partial x_4}), & i(x,y)=(x,y,x^3,0).
\end{align*}meets the leaves in smooth submanifolds:
\begin{align*}
 X \cap \mathrm{S}_M(ae_3+be_4) = \begin{cases}
                           \R_{\pm} \times \R, & \pm a >0;\\
                           \varnothing, & a = 0 \neq b;\\
                           \{0\} \times \R, & a=b=0.
                          \end{cases}
\end{align*}

The induced Lagrangian family $i^!\mathrm{Gr}(\pi_M)$ is the (symplectic) Poisson structure $\mathrm{Gr}(\tfrac{\partial}{\partial x}\wedge \tfrac{\partial}{\partial y})$. This can be argued in exactly the same fashion as in Example \ref{ex : intersections not manifolds}: on the open set $i^{-1}(\mathbb{R}^4 \diagdown Z)$ the family $i^!\mathrm{Gr}(\pi)$ is merely the graph of the pullback of the form which corresponds to $\pi$ on $\mathbb{R}^4 \diagdown Z$. Along points of $i^{-1}(Z)$, we again have that $i_*$ and $i^*$ are respectively the inclusion and the projection in the first factor, and so the computation along points of $Z$ is \emph{ipsis litteris} that of \ref{ex : intersections not manifolds}. 

Therefore, the intersection of the Poisson-Dirac submanifold $X$ with the symplectic foliation is a strict refinement of the partition of $X$ by its own symplectic foliation -- which consists of a single leaf.

\end{example}

\subsection{Clean Poisson-Dirac submanifolds}

In light of the discussion above, a first ``regularity'' condition to be imposed on Poisson-Dirac submanifolds is to require that there be no surprises when it comes to the induced singular foliation:

\begin{definition}\label{def : cleanly induced}
 A Poisson-Dirac submanifold $X$ of a Poisson manifold $(M,\pi_M)$ is {\bf clean} if the partition $\mathrm{S}_X$ by leaves of its induced Poisson structure $\pi_X$ coincides with the partition $\mathrm{S}'_X$ induced by the partition $\mathrm{S}_M$ of $M$ by leaves of $\pi_M$.
\end{definition}

The adjective ``clean'' is justified by the following

\begin{lemma}
 For a Poisson-Dirac submanifold $X$ of $(M,\pi_M)$, the following conditions are equivalent:
 \begin{enumerate}[i)]
  \item $X$ meets the leaves of $(M,\pi_M)$ cleanly --- that is, $X\cap \mathrm{S}_M(x)$ is an embedded submanifold of $\mathrm{S}_M(x)$ and $T(X\cap \mathrm{S}_M(x)) = TX \cap T\mathrm{S}_M(x)$;
  \item $X$ is a clean Poisson-Dirac submanifold.
 \end{enumerate}
\end{lemma}
\begin{proof}
Recall that $\mathrm{S}_X'$ refines $\mathrm{S}_X$, that is, each $\mathrm{S}_X(x)$ is a disjoint union
\begin{align*}
    & \mathrm{S}_X(x) =  \coprod_{y \in \Upsilon(x)}\mathrm{S}'_X(y), & \Upsilon(x) \subset \mathrm{S}_X(x).
\end{align*}

\noindent \emph{i) implies ii).} If $X$ meets $\mathrm{S}_M(x)$ cleanly for every $x \in X$, then $\mathrm{S}'_X(x)$ --- the smooth path connected component of $X \cap \mathrm{S}_M(x)$ through $x$ --- is by Lemma \ref{lem : app : leaf like clean is initial} an initial submanifold of $\mathrm{S}_X(x)$, with
\begin{align*}
    T_x\mathrm{S}'_X(x) & = T_xX \cap T_x\mathrm{S}_M(x) \\
    & = T_x\mathrm{S}_X(x).
\end{align*}Therefore, $\mathrm{S}'_X(x)$ is an open submanifold of $\mathrm{S}_X(x)$. Since the latter is connected and partitioned by $\mathrm{S}'_X$, we deduce that
\begin{align*}
    \mathrm{S}'_X(x) = \mathrm{S}_X(x)
\end{align*}for every $x \in X$. Therefore $X$ is a clean Poisson-Dirac submanifold.

\noindent \emph{ii) implies i).} By hypothesis, the smooth path connected component $\mathrm{S}'_X(x)$ of $X \cap \mathrm{S}_M(x)$ through $x$ is the symplectic leaf $\mathrm{S}_X(x)$ of $(X,\pi_X)$ through $x$. This implies that $X \cap \mathrm{S}_M(x)$ is a disjoint union of initial submanifolds $\mathrm{S}_X(x)$, for which according to a) in Lemma  \ref{lem : basic induced}  $T_y\mathrm{S}_X(x) = T_yX \cap T_y\mathrm{S}_M(x)$ for all $y \in \mathrm{S}_X(x)$. By Proposition \ref{prop:app-clean}, this implies that $X$ and $\mathrm{S}_M(x)$ meet cleanly.
\end{proof}

\begin{remark}\normalfont
The cleanness condition for Poisson-Dirac submanifolds is reminiscent of the transversality condition for Poisson transversals. In the case of the latter, we demand that a submanifold $X$ of a Poisson manifold $(M,\pi_M)$ meet the leaves of $M$ transversally,
\begin{align*}
    & T_xM = T_xX+T_x\mathrm{S}_M(x), & x \in X,
\end{align*}and that the intersections $X \cap \mathrm{S}_M(x)$ be symplectic submanifolds. It then \ul{follows} that the connected components of $X \cap \mathrm{S}_M(x)$ are the symplectic leaves of a smooth Poisson structure on $X$. 

The cleanness condition can be rightfully regarded as a relaxation of the transversality condition above. However, in contrast to Poisson transversals, a submanifold which meets the leaves of a Poisson structure cleanly and symplectically need \emph{not} be Poisson-Dirac, as the next two examples show.
\end{remark}

\begin{example}\label{ex : symplectic manifolds do not assemble into a Poisson structure}\normalfont
\emph{An example from \cite[Example 3]{CF} in which a submanifold which meets leaves cleanly and symplectically need not inherit an induced singular foliation.} On $\mathbb{C}^3$, equipped with complex coordinates $(z_1,z_2,z_3)$, we consider the Poisson structure of constant rank corresponding to the foliation given by $\dd z_2 = 0$ and $\dd z_3 - z_2\dd z_1 = 0$, and the pullback of the standard symplectic form $\tfrac{i}{2}\sum_1^3 \dd z_i \wedge \dd \overline{z}_i$ to leaves. Then the locus $X$ of $z_3=0$ meets leaves cleanly and symplectically:
\begin{align*}
 \mathrm{S}'_X(z_1,z_2,0) = \begin{cases}
                           \mathbb{C} \times \{ (0,0)\} & \text{if} \ z_2 = 0;\\
                            \{(z_1,z_2,0)\} & \text{if} \ z_2 \neq 0.
                          \end{cases}
\end{align*}This shows that the partition $\mathrm{S}'_X$ cannot even arise from a singular foliation (for it is not lower-semicontinuous).
\end{example}

\begin{example}\label{ex: induced foliation but not Poisson structure}\normalfont
\emph{An example in which the singular foliation induced on a submanifold which meets leaves cleanly and symplectically need not come from a Poisson structure.} Let $X \subset \R^{2}$ be the open unit disk and let $M \colon =X \times X \subset \R^{4}$ be endowed with the Poisson structure
\begin{align*}
\pi_M=(x_1^{2}+x^{2}_2+x_1)\tfrac{\partial}{\partial x_1}\wedge \tfrac{\partial}{\partial x_2}
+(x_3^{2}+x^{2}_4-x_3)\tfrac{\partial}{\partial x_3}\wedge
 	\tfrac{\partial}{\partial x_4}.
\end{align*}
This is the product of two dimensional Poisson structures which vanish in the circles of radius $\tfrac{1}{2}$ and center $\left(-\tfrac{1}{2},0\right)$ and $\left(\tfrac{1}{2},0\right)$, respectively.

The image of the origin under the diagonal embedding
\begin{align*}
 & i \colon X \rmap M, & i(t,s)=(t,s,t,s)
\end{align*}is the leaf of $\pi_M$ which consists of the origin alone, so the intersection of $X$ with $M$ is clean at that point. The image of a point $(t,s)$ which satisfies $t^2+s^2\pm t=0$  intersects transversely a two-dimensional ambient symplectic leaf. The complement of the union of the two circles $X_0=X\diagdown\{ t^2+s^2\pm t=0\}$  embeds in the four open symplectic leaves of $\pi_M$,
and the induced Lagrangian family is 
\begin{align*}
 & i^!\mathrm{Gr}(\pi_M)|_{X_0}=\mathrm{Gr}(\pi_{X_0}), &
 \pi_{X_0}=\tfrac{(t^{2}+s^{2})^{2}-t^{2}}{2t^{2}+2s^{2}}\tfrac{\partial}{\partial t}\wedge \tfrac{\partial}{\partial s}.
\end{align*}
Observe that the above formula extends to the intersection of the image of $X$ with the 2-dimensional leaves of $M$. 
Therefore $X$ intersects the symplectic leaves of $M$ cleanly in symplectic submanifolds and these submanifolds fit into
a  foliation of $X$. However, the leafwise symplectic forms do not come from a smooth Poisson tensor on $X$, because
\begin{align*}
 \underset{t \to 0}{\lim } \ \underset{s \to 0}{\lim } \ \mathrm{Gr}(\pi_{X_0})=\langle \tfrac{\partial}{\partial s}-2\dd t, \tfrac{\partial}{\partial t}+2\dd s\rangle \neq \langle  \dd t, \dd s\rangle = \underset{s \to 0}{\lim } \ \underset{t \to 0}{\lim } \ \mathrm{Gr}(\pi_{X_0})
\end{align*}shows that $\pi_{X_0}$ does not even extend to a continuous bivector on $X$.
\end{example}

\subsection{Split Poisson-Dirac submanifolds}

Among all Poisson structures $\pi_X$ on a Poisson-Dirac submanifold $X$ of a Poisson manifold $(M,\pi_M)$, the induced one is characterized by the property that Hamiltonian \emph{vectors} of $X$ are also Hamiltonian vectors for $M$: that is, for each $f \in C^{\infty}_c(X)$ and $x \in X$, there is an extension $\widetilde{f} \in C^{\infty}_c(M)$ of $f$, such that
\begin{align*}
 \mathrm{H}_f(x) = \mathrm{H}_{\widetilde{f}}(x).
\end{align*}This Poisson-Dirac submanifold is clean exactly when Hamiltonian \emph{curves} in $X$ are also Hamiltonian curves in $M$: that is, for each $f \in C^{\infty}_c(X \times I)$ and $x \in X$, there is an extension $\widetilde{f} \in C^{\infty}_c(M \times I)$ of $f$, such that
\begin{align*}
 \phi^{t,0}_{\mathrm{H}_f}(x) = \phi^{t,0}_{\mathrm{H}_{\widetilde{f}}}(x).
\end{align*}This leads to the next step in our hierarchy of good behavior, in which we require that every Hamiltonian \emph{flow} of $X$ be the restriction of a Hamiltonian flow of $M$, as ensured by:

\begin{definition}\label{def : poisson-dirac}
 A Poisson-Dirac submanifold $X$ of a Poisson manifold $(M,\pi_M)$ is {\bf split} if its Hamiltonian vector fields are the restriction of Hamiltonian vector fields of $M$.
\end{definition}

In contrast to the clean condition, there is a more convenient formulation of Definition \ref{def : poisson-dirac} involving a splitting condition: we say that an {\bf orthogonal splitting}  of a Poisson manifold $(M,\pi_M)$ along a submanifold $X$ is a splitting $TM|_X = TX \oplus E$ in which
\begin{align*}
 \pi_M|_X = \pi_X + \pi_E, \qquad \qquad \pi_X \in \Gamma(\wedge^2 TX), \qquad \qquad  \pi_E \in \Gamma(\wedge^2 E),
\end{align*}in which case it follows that $i^!\mathrm{Gr}(\pi_M) = \mathrm{Gr}(\pi_X)$, and so $\pi_X$ is the Poisson structure induced on $X$ by $(M,\pi_M)$.\footnote{These should not be confused with the Lie-Dirac submanifolds discussed in \cite[Section 8.3]{CF} and introduced in \cite{Xu} under the name of ``Dirac submanifolds".}

\begin{lemma}\label{lem : poisson-dirac, equivalent conditions}
The submanifolds along which a Poisson manifold has an orthogonal splitting are exactly the split Poisson-Dirac submanifolds.
\end{lemma}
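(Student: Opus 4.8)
I would prove the two inclusions separately, but first replace Definition \ref{def : poisson-dirac} by the more tractable pointwise condition (P): \emph{for every $g \in C^{\infty}_c(X)$ there is an extension $\widetilde g \in C^{\infty}_c(M)$ with $\mathrm{H}_{\widetilde g}$ tangent to $X$ along $X$ and $\mathrm{H}_{\widetilde g}|_X = \mathrm{H}_g$.} That split $\Rightarrow$ (P) follows by viewing a time-independent $g$ as an element of $C^{\infty}_c(X \times I)$ and differentiating the flow identity $\phi^{t,0}_{\mathrm{H}_g} = \phi^{t,0}_{\mathrm{H}_{\widetilde g}}|_X$ at $t=0$, which yields $\mathrm{H}_{\widetilde g_0}(x) = \mathrm{H}_g(x) \in T_xX$ for all $x \in X$; conversely, applying (P) slicewise to a time-dependent $f$ and assembling the extensions produces $\widetilde f$ whose $\mathrm{H}_{\widetilde f}$ is tangent to $X$ for all $t$, so $X$ is flow-invariant and $\phi^{t,0}_{\mathrm{H}_{\widetilde f}}|_X = \phi^{t,0}_{\mathrm{H}_f}$. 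Hence split $\iff$ (P), and I work with (P).

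Next I would record the linear dictionary that drives everything. For a complement $E$ to $TX$ in $TM|_X$, write $W := \operatorname{Ann}(E)$, a complement to $N^*X$ in $T^*M|_X$. A short computation using the skew-symmetry of $\pi_M^{\sharp}$ shows the equivalence of: (i) $\pi_M^{\sharp}(W) \subseteq TX$; (ii) $\pi_M^{\sharp}(N^*X) \subseteq E$; (iii) the mixed $TX \wedge E$ component of $\pi_M|_X$ vanishes, i.e. $\pi_M|_X = \pi_X + \pi_E$ with $\pi_X \in \Gamma(\wedge^2 TX)$ and $\pi_E \in \Gamma(\wedge^2 E)$. Thus an orthogonal splitting is exactly the datum of a smooth bundle map $s : T^*X \to T^*M|_X$ splitting the restriction $r : T^*M|_X \to T^*X$ (so $r \circ s = \id$) whose image lies in the pointwise set $C := (\pi_M^{\sharp})^{-1}(TX)$; one then sets $W := \operatorname{im} s$, $E := \operatorname{Ann}(W)$, and $\pi_M^{\sharp} \circ s = \pi_X^{\sharp}$ recovers the induced structure via the observation following the definition of orthogonal splitting.

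The implication orthogonal $\Rightarrow$ split is then immediate: given $E$, I extend $g$ so that $\dd\widetilde g|_X$ annihilates $E$ (make $\widetilde g$ constant along the fibres of a tubular neighbourhood modelled on $E$, then cut off), so $\dd\widetilde g(x) \in W$ and, by (i), $\mathrm{H}_{\widetilde g}(x) = \pi_M^{\sharp}(\dd\widetilde g(x)) \in T_xX$ equals $\pi_X^{\sharp}(\dd g(x)) = \mathrm{H}_g(x)$; this is (P). For split $\Rightarrow$ orthogonal I manufacture $s$ from (P). Locally I pick $g_1, \dots, g_n \in C^{\infty}_c(X)$ with $\dd g_1, \dots, \dd g_n$ a coframe, use (P) to obtain extensions $\widetilde g_i$ with $\mathrm{H}_{\widetilde g_i}$ tangent to $X$, and set $s(\dd g_i) := \dd\widetilde g_i|_X$, extended $C^{\infty}(X)$-linearly; then $r \circ s = \id$, and for $\xi = \sum a_i \dd g_i$ one has $\pi_M^{\sharp}(s(\xi)) = \sum a_i \mathrm{H}_{\widetilde g_i} \in TX$, so $\operatorname{im} s \subseteq C$. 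Finally I globalize with a partition of unity $\{\rho_\alpha\}$: the section $s := \sum_\alpha \rho_\alpha s^{(\alpha)}$ of $\operatorname{Hom}(T^*X, T^*M|_X)$ still satisfies $r \circ s = \id$ (since $\sum_\alpha \rho_\alpha = 1$) and $\operatorname{im} s \subseteq C$ (since each $C_x$ is a linear subspace and the combination is fibrewise affine), and $E := \operatorname{Ann}(\operatorname{im} s)$ is the sought orthogonal splitting.

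The main obstacle is exactly this passage from a pointwise to a smooth global splitting. Pointwise the complement always exists because $X$ is Poisson–Dirac — that is precisely surjectivity of $r$ on $C$, cf. \eqref{eq : pointwise induced Poisson} and Example \ref{ex: vector space} — but the subsets $C_x = (\pi_{M,x}^{\sharp})^{-1}(T_xX)$ and $\pi_{M,x}^{\sharp}(N^*_xX)$ jump rank, so neither is a subbundle and one cannot simply take $E = \pi_M^{\sharp}(N^*X)$ or $\operatorname{Ann}(C)$. The whole force of the split hypothesis is that these pointwise covector extensions are realized by honest functions with $X$-tangent Hamiltonian fields, which is what upgrades the fibrewise-linear choice to a \emph{smooth} bundle map $s$; and the partition-of-unity step succeeds only because the two defining properties ``$r \circ s = \id$ and $\operatorname{im} s \subseteq C$'' are stable under fibrewise affine combinations.
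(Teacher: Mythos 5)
Your argument is in substance the paper's own: the direction ``orthogonal splitting $\Rightarrow$ split'' is proved by extending functions so that their differentials along $X$ annihilate $E$, and the converse by manufacturing a bundle splitting $s\colon T^*X\to T^*M|_X$ (the paper's $\sigma$) out of locally chosen coordinate extensions and gluing with a partition of unity, using that the two defining conditions are convex/affine in $s$. Your linear dictionary (i)--(iii), the local construction of $s$ from the extensions $\widetilde g_i$, and the globalization step are all correct and coincide with the paper's proof.

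The one genuine soft spot is the arrow (P) $\Rightarrow$ split: ``applying (P) slicewise to a time-dependent $f$ and assembling the extensions'' does not, as stated, produce a smooth $\widetilde f\in C^{\infty}_c(M\times I)$ --- the extensions $\widetilde f_t$ are chosen independently for each $t$, and nothing forces $t\mapsto\widetilde f_t$ to be even measurable, let alone smooth. This matters because you route the implication ``orthogonal $\Rightarrow$ split'' through (P). The repair is already contained in your own construction: given $E$, the tubular-neighbourhood extension is canonical, so applied to a time-dependent $f$ it yields $\widetilde f$ smooth in $t$ with $\dd\widetilde f_t|_X$ annihilating $E$ for every $t$; hence $\mathrm{H}_{\widetilde f_t}$ is tangent to $X$ for all $t$, the flow preserves $X$, and it restricts to that of $f$. (The paper handles this by noting that $E\times I$ is an orthogonal splitting for $(\pi_M,0)$ on $M\times I$ along $X\times I$ and prescribing $\alpha|_{T(X\times I)}=\dd f$, $\alpha|_{E\times I}=0$.) With that substitution your proof is complete and is essentially the paper's.
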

\begin{proof}
Let $X$ be a submanifold of a Poisson manifold $(M,\pi_M)$. If $E \subset TM|_X$ is an orthogonal splitting along $X$, then $E \times I \subset T(M \times I)|_{X \times I}$ is an orthogonal splitting for $(\pi_M,0) \in \X^2(M \times I)$ along $X \times I$. Let $f \in C^{\infty}_c(X \times I)$ be a smooth function, and consider $\alpha \in \Gamma(T^*(M \times I)|_{X \times I})$ defined by
\begin{align*}
 & \alpha|_{T(X \times I)} = \dd f, & \alpha|_{E \times I} = 0.
\end{align*}Then $\alpha = \dd \widetilde{f}|_{X \times I}$ for some function $\widetilde{f} \in C^{\infty}_c(M \times I)$, and the Hamiltonian flow of $f$ is by construction the restriction to $X$ of the Hamiltonian flow of $\widetilde{f}$.

Conversely, suppose $X$ is a split Poisson-Dirac submanifold. Then an orthogonal splitting along $X$ may be constructed as $E \colon =\sigma(T^*X)^{\circ}$, where $\sigma \colon T^*X \to T^*M$ is any linear map satisfying
\begin{align*}
 & \sigma(\xi)|_X=\xi,  & \pi_X^{\sharp}(\xi)=\pi_M^{\sharp}\sigma(\xi)
\end{align*}for all $\xi \in T^*X$. Because these conditions are convex, it suffices to show that a such linear map exists in an open neighborhood $U \subset X$ of a point $x \in X$. But if $x_1,...,x_n \in C^{\infty}_c(X)$ define a coordinate chart on $U$, and $\widetilde{x}_1,...,\widetilde{x}_n \in C^{\infty}_c(M)$ are the extensions granted by the split Poisson-Dirac condition, the linear map
\begin{align*}
 & T^*U \rmap T^*M|_U, & \dd x_i \mapsto \dd \widetilde{x}_i
\end{align*}does the job.
\end{proof}

\begin{example}\label{ex : clean but not Poisson-Dirac}\normalfont
\emph{An example from \cite[Example 6]{CF} of a clean Poisson-Dirac submanifold which is not split.} Consider the embedding $i \colon \R^2 \to (M,\pi_M)$, where $M=\R^4$ and
 \begin{align*}
 & \pi_M = x_1^2\tfrac{\partial}{\partial x_1} \wedge \tfrac{\partial}{\partial x_2}+x_3\tfrac{\partial}{\partial x_3} \wedge \tfrac{\partial}{\partial x_4}, & i(t,s)=(t^2,0,t,s).
\end{align*}Then $i^!\mathrm{Gr}(\pi_M) = \mathrm{Gr}(t\tfrac{\partial}{\partial x_1} \wedge \tfrac{\partial}{\partial x_2})$ shows that $i$ maps leaves into leaves, and so its image is a clean Poisson-Dirac submanifold. However, it is not split, since the Hamiltonian vector field of $s \in C^{\infty}(\R^2)$ is not $i$-related to any Hamiltonian vector field of $M$:
\begin{align*}
 \mathrm{H}_s = -t\tfrac{\partial}{\partial t} \sim v = -2x_1\tfrac{\partial}{\partial x_1}-x_3\tfrac{\partial}{\partial x_3} = -2x_1\tfrac{\partial}{\partial x_1}+\mathrm{H}_{x_4}.
\end{align*}Here $v \in \X(M)$ is not Hamiltonian, because no Hamiltonian vector field on $(M,\pi_M)$ restricts to $x_1\tfrac{\partial}{\partial x_1}$ on $X$.
\end{example}

\begin{example}\normalfont
 Poisson submanifolds $X$ of a Poisson manifold $(M,\pi_M)$ are exactly those split Poisson-Dirac submanifolds for which any splitting $TM|_X=TX \oplus E$ along $X$ is orthogonal.
\end{example}

\begin{example}\normalfont
 Poisson transversals $X$ in a Poisson manifold $(M,\pi_M)$ are split Poisson-Dirac submanifolds with the unique orthogonal splitting $TM|_X=TX \oplus \pi_M^{\sharp}(N^*X)$ along $X$.
\end{example}

\subsection{Coregular submanifolds}
Among split Poisson-Dirac submanifolds, Poisson submanifolds and Poisson transversals are distinguished further by the property that  they are either saturated by, or transverse to ambient symplectic leaves. That is, the symplectic directions transverse to the submanifold fit into a subbundle (the trivial one and a full normal bundle, respectively). This suggests that we consider those Poisson-Dirac submanifolds $X$ of $(M,\pi_M)$, for which the image of the map
\begin{align*}
 & \mathrm{Q}_X \colon N^*X \rmap NX, & \mathrm{Q}_X(\xi) \colon =\pi_M^{\sharp}(\xi)+TX
\end{align*}is a vector bundle:

\begin{definition}\label{def: coregular} 
  A split Poisson-Dirac submanifold of a Poisson manifold $(M,\pi_M)$ is a {\bf coregular Poisson-Dirac} submanifold if $\mathrm{Q}_X \colon N^*X \rmap NX$ has locally constant rank.
\end{definition}

Thus, Poisson submanifolds are those Poisson-Dirac submanifolds for which $\mathrm{Q}_X$ vanishes, while Poisson transversals are those for which $\mathrm{Q}_X$ is surjective.

\begin{lemma}\label{lem : coregular is PD}For a submanifold $X$ of a Poisson manifold $(M,\pi_M)$, the following are equivalent:
\begin{enumerate}[i)]
    \item $X$ is a coregular Poisson-Dirac submanifold;
    \item $\pi_M^{\sharp}(N^*X) \subset TM|_X$ is a vector subbundle which meets $TX$ trivially\end{enumerate}
\end{lemma}
\begin{proof}
If $X$ is such that $\pi_M^{\sharp}(N^*X)\oplus TX$ is a vector subbundle of $TM|_X$, then any splitting $TM|_X = TX \oplus E$ in which $\pi_M^{\sharp}(N^*X) \subset E$ is an orthogonal splitting along $X$, and the rank of $Q_X$ is that of the vector bundle $\pi_M^{\sharp}(N^*X)$. Hence ii) implies i). Conversely, a Poisson-Dirac submanifold $X$ in a Poisson manifold $(M,\pi_M)$ in particular has an induced Poisson structure, whence $\pi_M^{\sharp}(N^*X) \cap TX = 0$. Hence $\pi_M^{\sharp}(N^*X)\oplus TX$ is a vector subbundle of $TM|_X$ if $\mathrm{Q}_X \colon N^*X \to NX$ has constant rank. So i) implies ii).
\end{proof}

\begin{example}\label{ex : point}\normalfont
 Every point in a Poisson manifold is coregular.
\end{example}

\begin{example}\label{ex : PD but not coregular}\normalfont \emph{An example of a split Poisson-Dirac submanifold which is not coregular.} Let $M=\mathfrak{so}(3)^*$ be dual to the Lie algebra of the group of
  oriented isometries of Euclidean 3-space, equipped with its canonical linear Poisson structure
 \begin{align*}
\pi_M=x_1\tfrac{\partial}{\partial x_2}\wedge \tfrac{\partial}{\partial x_3}+x_2\tfrac{\partial}{\partial x_3}\wedge \tfrac{\partial}{\partial x_1}+x_3\tfrac{\partial}{\partial x_1}\wedge \tfrac{\partial}{\partial x_2}
\end{align*}whose leaves are concentric spheres and the origin. Consider 
the embedding $i \colon \R \to M$ given by $i(t)=(t,0,0)$. Then $\pi_M$ has an orthogonal splitting along the image $X$ of $i$, namely
\begin{align*}
 & E=\langle \tfrac{\partial}{\partial x_2}, \tfrac{\partial}{\partial x_3}\rangle, & \pi_M^{\sharp}(E^{\circ}) = 0.
\end{align*}Hence $X$ is a split Poisson-Dirac submanifold. Note however that
\begin{align*}
 \pi_M^{\sharp}(N^*X) = \begin{cases}
                         E & \text{if} \ x_1 \neq 0;\\
                         0 & \text{if} \ x_1 = 0
                        \end{cases}
\end{align*}whence $X$ is not coregular.
\end{example}

The hierarchy of submanifolds illustrated in Figure \ref{fig:hierarchyintro1} is therefore strict, since there exist:

--- Poisson-Dirac submanifolds which are not clean (e.g. the jumping phenomenon of Example \ref{ex : jumping phenomenon});

--- clean Poisson-Dirac submanifolds which are not split (e.g. Example \ref{ex : clean but not Poisson-Dirac});

--- split Poisson-Dirac manifolds which are not coregular (e.g. Example \ref{ex : PD but not coregular}).

Nevertheless, for certain types of Poisson manifolds $(M,\pi_M)$, certain classes in the hierarchy illustrated in Figure \ref{fig:hierarchyintro1} may coincide. For instance:

\begin{example}\label{ex : coisotropic induced is PS}\normalfont
A submanifold $X$ in a Poisson manifold $(M,\pi_M)$ is called {\bf coisotropic} if
\[
\pi_M^{\sharp}(N^*X) \subset TX.
\]
A coisotropic which is Poisson-Dirac is necessarily a Poisson submanifold.
\end{example}

Another useful example occurs when the ambient manifold has locally constant rank:
\begin{proposition}\label{pro : regular holds no surprises} A Poisson-Dirac submanifold of a Poisson manifold of locally constant rank is necessarily coregular of locally constant rank.
\end{proposition}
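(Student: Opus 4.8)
The plan is to exploit the regularity of $(M,\pi_M)$ to play the rank of $\mathrm{Q}_X$ off against the rank of the induced Poisson structure $\pi_X$, and then to close the argument by a semicontinuity observation. Since $M$ has locally constant rank, its symplectic foliation is a genuine (regular) foliation $\mathscr{F}$, so that $T\mathscr{F}=\pi_M^{\sharp}(T^*M)$ is a subbundle of constant rank $2k$, and $\ker\pi_M^{\sharp}=(T\mathscr{F})^{\circ}$ is a subbundle of constant rank $\dim M-2k$. I would first record this, noting that $\ker\pi_M^{\sharp}=(\operatorname{im}\pi_M^{\sharp})^{\circ}$ holds for any bivector by skew-symmetry of $\pi_M$.

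Next I would compute the rank of $\mathrm{Q}_X$ pointwise by a dimension count. Restricting $\pi_M^{\sharp}$ to $N^*_xX=(T_xX)^{\circ}$, its kernel is $N^*_xX\cap\ker\pi_M^{\sharp}=(T_xX)^{\circ}\cap(T_x\mathscr{F})^{\circ}=(T_xX+T_x\mathscr{F})^{\circ}$, whence $\dim\pi_M^{\sharp}(N^*_xX)=2k-\dim(T_xX\cap T_x\mathscr{F})$. Because $X$ inherits a Poisson structure it is Poisson-Dirac, so $\pi_M^{\sharp}(N^*_xX)\cap T_xX=0$; hence the projection $TM|_X\to NX$ is injective on $\pi_M^{\sharp}(N^*_xX)$, and $\operatorname{rank}\mathrm{Q}_X(x)=\dim\pi_M^{\sharp}(N^*_xX)=2k-\dim(T_xX\cap T_x\mathscr{F})$. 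Invoking Lemma \ref{lem : basic induced}a), the last intersection is $T_x\mathrm{S}_X(x)$, whose dimension is exactly the rank of $\pi_X$ at $x$. This yields the key identity
\[
 \operatorname{rank}\mathrm{Q}_X(x)+\operatorname{rank}\pi_X(x)=2k \qquad\text{for all } x\in X.
\]

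With this identity at hand the conclusion follows from a short semicontinuity argument, which I expect to be the only conceptually delicate point. Both summands are ranks of smooth bundle maps --- namely $\mathrm{Q}_X:N^*X\to NX$ and $\pi_X^{\sharp}:T^*X\to TX$ --- and hence are lower semicontinuous; since their sum is the constant $2k$, each is forced to be upper semicontinuous as well, so both are locally constant. Local constancy of $\operatorname{rank}\mathrm{Q}_X$, combined with the Poisson-Dirac property already noted, gives coregularity of $X$ via Lemma \ref{lem : coregular is PD}; and local constancy of $\operatorname{rank}\pi_X$ says precisely that the induced Poisson manifold $(X,\pi_X)$ has locally constant rank, completing the proof.
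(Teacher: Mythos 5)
Your proof is correct, and it reaches the conclusion by a more direct route than the paper's. The paper first writes $\mathrm{Gr}(\pi_M)$ as a gauge transform $\mathcal{R}_{\omega_M}\mathrm{Gr}(\mathscr{F})$ of the Dirac structure of the symplectic foliation, so that $i^!\mathrm{Gr}(\pi_M)=\mathcal{R}_{i^*\omega_M}i^!\mathrm{Gr}(\mathscr{F})$ reduces everything to the foliation case of Example \ref{ex : induced from foliation is foliation}; coregularity is then extracted from the chain ``$TX\cap T\mathscr{F}$ smooth $\Leftrightarrow$ $TX+T\mathscr{F}$ smooth $\Leftrightarrow$ $N^*X\cap N^*\mathscr{F}=\ker\mathrm{Q}_X$ smooth.'' You instead quantify the same phenomenon: your identity $\mathrm{rank}\,\mathrm{Q}_X(x)+\mathrm{rank}\,\pi_X(x)=\mathrm{rank}\,\pi_M$ is exactly the dimension count underlying that chain (note $\ker\mathrm{Q}_X=N^*X\cap N^*\mathscr{F}$ precisely because the Poisson--Dirac hypothesis gives $\pi_M^{\sharp}(N^*X)\cap TX=0$, as you use), and the complementary-semicontinuity step is the same mechanism the paper invokes inside Example \ref{ex : induced from foliation is foliation}. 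The trade-off: the paper's gauge-transform detour proves something slightly stronger --- that mere \emph{smoothness} of $i^!\mathrm{Gr}(\pi_M)$ already forces it to be a Poisson structure of locally constant rank --- whereas you take the Poisson hypothesis at face value, which is all the stated proposition requires; in exchange your argument is self-contained modulo Lemma \ref{lem : basic induced}a) and Lemma \ref{lem : coregular is PD}, and avoids the Dirac-geometric machinery entirely. The only implicit point worth making explicit is that the constant $2k$ is only locally constant on $M$, so the identity and the semicontinuity argument should be read on a neighborhood where $\mathrm{rank}\,\pi_M$ is constant --- which is harmless, since the conclusion is itself local.
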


\begin{proof}
If $(M,\pi_M)$ has locally constant rank, then
\begin{align*}
 \mathrm{Gr}(\pi_M) = \mathcal{R}_{\omega_M}\mathrm{Gr}(\mathscr{F}) \colon =\{u+\iota_u\omega_M+\xi \ | \ u+\xi \in \mathrm{Gr}(\mathscr{F})\},
\end{align*}where $T\mathscr{F}=\pi_M^{\sharp}(T^*M)$ and $\omega_M$ is a two-form on $M$ for which the endomorphism $\pi_M^{\sharp}\omega_M^{\sharp}$ restricts to the identity on $T\mathscr{F}$. Then
\begin{align*}
 i^!\mathrm{Gr}(\pi_M) = \mathcal{R}_{i^*(\omega_M)}i^!\mathrm{Gr}(\mathscr{F})
\end{align*}shows that $i^!\mathrm{Gr}(\pi_M)$ is Dirac exactly when $i^!\mathrm{Gr}(\mathscr{F})$ is Dirac, which according to Example \ref{ex : induced from foliation is foliation}, means that $i^!\mathrm{Gr}(\pi_M)$ must be a Poisson structure of locally constant rank if it is smooth. Moreover,
\[
 TX \cap T\mathscr{F}|_X \ \text{smooth} \quad \Longleftrightarrow \quad  TX + T\mathscr{F}|_X \ \text{smooth} \quad \Longleftrightarrow \quad N^*X \cap N^*\mathscr{F}\ \text{smooth};
\]because the leftmost space is $T\mathscr{F}_X$ and the rightmost is $\ker \mathrm{Q}_X$, $X$ is coregular.
\end{proof}

\section{Poisson submersions with Poisson fibres}\label{sec : coregular Poisson submersions}

We next turn to a ``compatibility'' condition for a surjective submersion between Poisson manifolds
\begin{align}\label{eq : submersion between Poisson manifolds}
 p \colon (\Sigma,\pi_{\Sigma}) \rmap (M,\pi_M).
\end{align}
There are two natural conditions to impose:
\begin{enumerate}[a)]
 \item that $p$ be a Poisson map;
 \item that fibres of $p$ be Poisson-Dirac submanifolds.
\end{enumerate}
Condition a) appears naturally in Poisson geometry:

\begin{example}\label{ex : symplectic groupoid}
Let $G \rightrightarrows M$ be a Lie groupoid. A two-form $\omega_G \in \Omega^2(G)$ is {\bf multiplicative} if
\[
\mathrm{m}^*(\omega_G)=\pr_1^*(\omega_G)+\pr_2^*(\omega_G),
\]where $\mathrm{m}$, $\mathrm{s}$ and $\mathrm{t}$ denote respectively the multiplication, source, and target maps. 

Then, if $\omega_G$ is symplectic, a unique Poisson structure $\pi_M$ exists on $M$, such that\begin{align*}
 & \mathbf{s} \colon (G,\omega_G) \to (M,\pi_M), & \mathbf{t} \colon (G,\omega_G) \to (M,-\pi_M)
\end{align*}are Poisson maps. 
\end{example}

Conversely, given a Poisson manifold $(M,\pi_M)$, there exists a submersion as in (\ref{eq : submersion between Poisson manifolds}), in which $\pi_{\Sigma}$ is symplectic and $p$ is Poisson --- this is what is called a \emph{symplectic realization} \cite{Ka,CDW}. A symplectic realization is said to be \emph{complete} if, for every for a function $f \in C^{\infty}(M)$, the Hamiltonian vector field on $\Sigma$ corresponding to $f \circ p$ is complete if that of $f$ is. Complete symplectic realizations exist exactly when $(M,\pi_M)$ arises from a symplectic groupoid $(G,\omega_G)$ as in Example \ref{ex : symplectic groupoid} \cite[Theorem 8]{CF}.
\\

Condition b), on the other hand, appears naturally in the context of this paper if one is to interpret a submersion as a family $p^{-1}(x)$ of submanifolds of $\Sigma$, parametrized by $M$. The simplest condition to consider in this direction is:

\begin{example}[Coupling Poisson structure]\label{ex : coupling Poisson structure}\normalfont
 A submersion $p \colon (\Sigma,\pi_{\Sigma}) \to M$ from a Poisson manifold is \emph{coupling} if its fibres are Poisson transversals.
\end{example}
For example: every Poisson manifold is coupling around an embedded symplectic leaf. Explicitly: if $(\mathrm{S}_M(x),\omega_{\mathrm{S}_M(x)})$ is a closed symplectic leaf of $(M,\pi_M)$, and $p \colon M \supset U \to \mathrm{S}_M(x)$ is a tubular neighborhood of $\mathrm{S}_M(x)$, then (shrinking $U$ if need be) $p \colon (U,\pi_M) \to \mathrm{S}_M(x)$ is coupling.\\

A simple example in which both conditions a) and b) are met is given by the following

\begin{example}[Vertical Poisson structures]\label{ex:vertical}\normalfont
 Any vertical Poisson structure $\pi_{\Sigma} \in \Gamma(\wedge^2V)$, $V=\ker p_*$, turns a surjective submersion
 $p \colon \Sigma \to M$ into a Poisson submersion $p \colon (\Sigma,\pi_{\Sigma}) \to (M,0)$ whose fibers are Poisson submanifolds.
\end{example}

The last two examples, with coregular Poisson-Dirac fibres, motivate the following:
\begin{definition}
 A submersion $p \colon (\Sigma,\pi_{\Sigma}) \to M$ has {\bf Poisson fibres} if its fibers
 are Poisson-Dirac submanifolds.
\end{definition}

\begin{example}\label{ex : SPF which is not coregular}
    Consider the submersion
    \begin{align*}
        & p: \Sigma = \mathbb{R}^3 \rmap \R^2 = M, & p(x_1,x_2,x_3)=(x_1,x_2).
    \end{align*}Then fibres of $p$ are Poisson-Dirac submanifolds of $\Sigma$ when equipped with the Poisson structure
    \[
    \pi_{\Sigma} = x_3\tfrac{\partial}{\partial x_1} \wedge \tfrac{\partial}{\partial x_2} \in \X^2(\Sigma),
    \]all fibres inheriting the zero Poisson structure. Note however that none of the fibres is a \emph{coregular} Poisson-Dirac submanifold.
\end{example}

It turns out that the fibres of a \ul{Poisson submersion} with Poisson fibres are automatically coregular. In fact,

\begin{theorem}\label{thm : coregular submersion pulls back coregular}
  A Poisson submersion $p \colon (\Sigma,\pi_{\Sigma}) \to (M,\pi_M)$ in which each tangent space $T_xp^{-1}(px) \subset (T_x\Sigma,\pi_{\Sigma,x})$ inherits a Poisson structure --- that is, $\pi_{\Sigma}^{\sharp}(V^{\circ}) \cap V = 0$ --- automatically has coregular Poisson-Dirac submanifolds for fibres. Moreover, for a coregular Poisson-Dirac submanifold $Y \subset M$,
  \begin{enumerate}[a)]
   \item $X \colon =p^{-1}(Y) \subset \Sigma$ is a coregular Poisson-Dirac submanifold;
   \item $p \colon (X,\pi_X) \to (Y,\pi_Y)$ is a Poisson submersion with Poisson fibres.
  \end{enumerate}
\end{theorem}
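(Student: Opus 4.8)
The plan is to lean on two structural facts that hold because $p$ is a Poisson submersion. Writing $V=\ker p_*$, the conormal of a fibre is pulled back from the base, $V^{\circ}=p^*(T^*M)$, and the Poisson condition is the $\sharp$-level identity $p_*\circ \pi_{\Sigma}^{\sharp}\circ p^* = \pi_M^{\sharp}$. Together they present the coregular distribution of the fibres, $W:=\pi_{\Sigma}^{\sharp}(V^{\circ})=(\pi_{\Sigma}^{\sharp}\circ p^*)(T^*M)$, as the image of a single bundle morphism whose rank I can read off after pushing forward along $p$.

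To prove the first assertion, I would fix a fibre $F=p^{-1}(m)$ and $x\in F$. Since $\ker(p_*|_{W_x})=W_x\cap V_x$, the pointwise hypothesis $W_x\cap V_x=0$ makes $p_*$ injective on $W_x$, so that $\dim W_x=\dim p_*(W_x)=\dim \pi_M^{\sharp}(T^*_mM)=\mathrm{rank}(\pi_M)_m$. As $m=p(x)$ is constant along $F$, the morphism $\pi_{\Sigma}^{\sharp}|_{V^{\circ}}$ has constant rank over $F$; by the constant-rank property of vector-bundle morphisms its image $W$ is then a smooth subbundle of $T\Sigma|_F$, and it meets $V$ trivially by hypothesis, so each fibre is coregular (Definition \ref{def: coregular}). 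This pointwise-to-global step is the crux of the whole argument: the decisive observation is that $W$ is the image of the \emph{fixed} morphism $\pi_{\Sigma}^{\sharp}\circ p^*$ whose rank equals $\mathrm{rank}(\pi_M)_{p(x)}$, constant along each fibre precisely because $p$ is; everything else is bookkeeping with annihilators and the Poisson identity.

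For a coregular $Y\subset M$, put $X:=p^{-1}(Y)$ and $Q^Y:=\pi_M^{\sharp}(N^*Y)$. Since $T_xX=p_*^{-1}(T_{px}Y)$, annihilators of preimages give $N^*X=p^*(N^*Y)$, so $\pi_{\Sigma}^{\sharp}(N^*X)=(\pi_{\Sigma}^{\sharp}\circ p^*)(N^*Y)\subseteq W$. Pushing forward, $p_*$ maps $\pi_{\Sigma}^{\sharp}(N^*_xX)$ onto $Q^Y_{px}$, injectively because $W\cap V=0$; as $Y$ is coregular, $\dim Q^Y_{px}$ is locally constant, so $\pi_{\Sigma}^{\sharp}(N^*X)$ has locally constant rank and is a smooth subbundle. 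For the transversality, if $w\in \pi_{\Sigma}^{\sharp}(N^*X)\cap TX$ then $p_*w\in T_{px}Y\cap Q^Y_{px}=0$ by coregularity of $Y$, whence $w\in V$ and so $w\in W\cap V=0$; thus $\pi_{\Sigma}^{\sharp}(N^*X)\cap TX=0$ and $X$ is coregular, proving a).

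For b), note $p|_X:X\to Y$ is a surjective submersion with the same fibres as $p$, so by the first assertion it suffices to show $p|_X$ is Poisson and that its vertical tangent spaces inherit Poisson structures. Given $\eta\in T^*_{px}Y$, choose $\widetilde{\eta}\in T^*_{px}M$ with $\widetilde{\eta}|_{T_{px}Y}=\eta$ and $\pi_M^{\sharp}(\widetilde{\eta})\in T_{px}Y$, so that $\pi_Y^{\sharp}\eta=\pi_M^{\sharp}\widetilde{\eta}$ (Lemma \ref{lem : coregular is PD}, Example \ref{ex: vector space}). Then $p^*\widetilde{\eta}$ restricts to $(p|_X)^*\eta$ on $T_xX$ and $\pi_{\Sigma}^{\sharp}(p^*\widetilde{\eta})\in T_xX$, its $p_*$-image lying in $T_{px}Y$; by the characterisation of the induced structure $\pi_X^{\sharp}((p|_X)^*\eta)=\pi_{\Sigma}^{\sharp}(p^*\widetilde{\eta})$, and applying $p_*$ yields $\pi_M^{\sharp}\widetilde{\eta}=\pi_Y^{\sharp}\eta$, so $p|_X$ is Poisson. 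The same formula shows every element of $\pi_X^{\sharp}(N^*_XF)=\pi_X^{\sharp}((p|_X)^*T^*Y)$ has the form $\pi_{\Sigma}^{\sharp}(p^*\widetilde{\eta})\in W$, whence $\pi_X^{\sharp}(N^*_XF)\cap V\subseteq W\cap V=0$; the first assertion applied to $p|_X$ then completes b).
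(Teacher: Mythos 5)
Your proposal is correct and follows essentially the same route as the paper: both arguments rest on the identity $p_*\circ\pi_{\Sigma}^{\sharp}\circ p^*=\pi_M^{\sharp}$ together with the injectivity of $p_*$ on $\pi_{\Sigma}^{\sharp}(V^{\circ})$ granted by the hypothesis, and your extension argument for the Poisson property of $p|_X$ is precisely the pointwise unpacking of the Dirac-theoretic lemma the paper cites. The only (cosmetic) differences are that you establish smoothness by pushing ranks forward to the base where the paper pulls the subbundle $TY\oplus\pi_M^{\sharp}(N^*Y)$ back under $p_*$, and that you prove the coregularity of the submersion directly rather than obtaining it as the case $Y=\{\mathrm{pt}\}$ of part a).
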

\begin{proof}
 For a coregular Poisson-Dirac submanifold $Y$ of $(M,\pi_M)$, we have that
 \begin{align*}
  TY \oplus \pi_M^{\sharp}(N^*Y) \subset TM|_Y
 \end{align*}is a vector subbundle. Taking the preimage under $p_*$,
 \begin{align*}
  TX + p_*^{-1}(\pi_M^{\sharp}(N^*Y)) \subset T\Sigma|_X
 \end{align*}is a vector subbundle. Now, because $p$ is Poisson,
 \begin{align*}
  TX + p_*^{-1}(\pi_M^{\sharp}(N^*Y)) = TX + \pi_{\Sigma}^{\sharp}(N^*X),
 \end{align*}and because the fibres of $p$ are Poisson-Dirac,
 \begin{align*}
  TX \cap \pi_{\Sigma}^{\sharp}(N^*X) = V \cap \pi_{\Sigma}^{\sharp}(p^*(\ker \mathrm{Q}_Y)) \subset V \cap \pi_{\Sigma}^{\sharp}(V^{\circ}) = 0.
 \end{align*}By Lemma \ref{lem : coregular is PD}, it follows that $X$ is a coregular Poisson-Dirac submanifold --- and specializing to the case where $Y$ is a point, we deduce that $p \colon (\Sigma,\pi_{\Sigma}) \to (M,\pi_M)$ is a Poisson submersion with coregular Poisson-Dirac submanifolds as fibres. Moreover, the diagram of Poisson manifolds
 \begin{align*}
  \xymatrix{
  (X,\pi_X) \ar[d]_{p|_X} \ar[r]^j & (\Sigma,\pi_{\Sigma}) \ar[d]^p\\
  (Y,\pi_Y) \ar[r]_i & (M,\pi_M)
  }
 \end{align*}satisfies the condition of \cite[Lemma 3]{PT1.5}, which implies that $p|_X \colon (X,\pi_X) \to (Y,\pi_Y)$ is a 
 Poisson submersion. Explicitly: because $i$ induces a Poisson structure, for any $u_Y+\xi_Y \in \mathrm{Gr}(\pi_Y)$ there is $\xi_M \in T^*M$ such that $\xi_Y=i^*(\xi_M)$ and $i_*(u_Y)+\xi_M \in \mathrm{Gr}(\pi_M)$; because $p \colon (\Sigma,\pi_{\Sigma}) \to (M,\pi_M)$ is Poisson, there exists $u_{\Sigma} \in T\Sigma$ such that $i_*(u_Y)=p_*(u_{\Sigma})$ and $u_{\Sigma}+p^*(\xi_M) \in \mathrm{Gr}(\pi_{\Sigma})$. Because $TX\simeq TY \times_{TM}T\Sigma$ and $pj=ip|_X$, it follows that $(u_Y,u_{\Sigma}) + (p|_X)^*(\xi_Y) \in \mathrm{Gr}(\pi_X)$, and this implies that $p|_X$ is a Poisson submersion. It has Poisson fibres because both
 \begin{align*}
  & V_y \subset (T_y\Sigma,\pi_{\Sigma,y}), & T_xX \subset (T_x\Sigma,\pi_{\Sigma,x})
 \end{align*}inherit Poisson structures $\pi_{V,y} \in \wedge^2V_y$ and $\pi_{X,x} \in \wedge^2T_xX$, for all $y \in \Sigma$ and $x \in X$, and therefore $V_x \subset (T_x\Sigma,\pi_{X,x})$ inherits the Poisson structure $\pi_{V,x}$.
\end{proof}

In the spirit of the comment leading up to Proposition \ref{pro : regular holds no surprises}, one could say that, for the class of submanifolds which arise as fibres of Poisson submersions, the hierarchy of Figure \ref{fig:hierarchyintro2} collapses. Other pertinent examples of this phenomenon can be found in Section \ref{sec : Examples}.

\begin{remark}\normalfont
 A Poisson map $p \colon (\Sigma,\pi_{\Sigma}) \to (M,\pi_M)$ is automatically transverse to any Poisson transversal $Y \subset M$, whose preimage $X \colon =p^{-1}(Y) \subset \Sigma$ is again a Poisson transversal, and $p|_X \colon X \to Y$ becomes a Poisson map for the induced Poisson structures \cite[Lemma 1]{PT1.5}. Theorem \ref{thm : coregular submersion pulls back coregular} may be regarded as a possible analogue to this previous statement in the context of coregular Poisson-Dirac submanifolds. 
\end{remark}
Let us point out in passing that --- in contrast to Poisson transversals --- Poisson maps need \emph{not} meet coregular Poisson-Dirac submanifolds cleanly, and even when they do (as in the hypotheses of Theorem \ref{thm : coregular submersion pulls back coregular}), their preimage need not be Poisson-Dirac, as the next two examples illustrate.

\begin{example}\label{ex : Poisson maps need not meet coregular submanifolds cleanly}\normalfont \emph{An example of a Poisson map which does not meet leaves cleanly.} Consider the surjective Poisson map
 \begin{align*}
  \psi \colon \left(\R^4,\tfrac{\partial}{\partial x_1} \wedge \tfrac{\partial}{\partial x_2} + \tfrac{\partial}{\partial x_3} \wedge \tfrac{\partial}{\partial x_4} \right) \rmap \left(\R^2,x_1\tfrac{\partial}{\partial x_1} \wedge \tfrac{\partial}{\partial x_2} \right),\\
  \psi(x_1,x_2,x_3,x_4)=(x_2,x_3x_4-x_1x_2)
 \end{align*}Then $\psi$ does not meet cleanly the leaf consisting of the origin alone, since
 \begin{align*}
  \psi^{-1}(0) = \{ (x_1,0,x_3,x_4) \ | \ x_3x_4=0\}
 \end{align*}is not a manifold.
\end{example}

\begin{example}\label{ex : Poisson submersion without coregular fibres}\normalfont \emph{An example of a Poisson submersion whose fibres are not Poisson-Dirac.} Let the cotangent bundle $T^*M$ of a smooth manifold be equipped with its canonical symplectic form $\omega_{\mathrm{can}} \in \Omega^2(T^*M)$,
 \begin{align*}
  & \omega_{\mathrm{can}} \colon =-\dd\lambda_{\mathrm{can}}, \quad \lambda_{\mathrm{can}} \in \Omega^1(T^*M), & \lambda_{\mathrm{can}}(v)_{\xi} \colon =\langle \xi,p_*(v)\rangle, \quad v \in T_{\xi}T^*M.
 \end{align*}If $\pi_{\mathrm{can}} \in \X^2(T^*M)$ denotes the Poisson structure corresponding to $\omega_{\mathrm{can}}$, the canonical projection
 \begin{align*}
  p \colon (T^*M,\pi_{\mathrm{can}}) \rmap (M,0)
 \end{align*}defines a Poisson submersion, none of whose fibres is Poisson-Dirac. Observe that this example shows that the hypothesis in Theorem \ref{thm : coregular submersion pulls back coregular} that tangent spaces inherit Poisson structures cannot be removed. 
\end{example}

\subsection{Couplings over leaves}

 As discussed in the previous section, Poisson submersions with Poisson fibres are modeled on both vertical Poisson structures and coupling Poisson submersions, in the same way as coregular Poisson-Dirac submanifolds were modeled on Poisson submanifolds and Poisson transversals. 
 
 In order to introduce our remaining \emph{dramatis person\ae}, we carry out a closer examination of the coupling condition to pave the way for the discussion to follow. \\

 A Poisson bivector $\pi_{\Sigma} \in \X^2(\Sigma)$ is \emph{coupling} for a surjective submersion $p \colon \Sigma \to M$ when the fibres of $p$ are Poisson transversals. This is equivalent to $\mathrm{Gr}(\pi_{\Sigma})$ being transverse to the Dirac structure $\mathrm{Gr}(V) \colon =V\oplus V^{\circ}$ corresponding to the foliation by fibres of $p$. This is in turn equivalent (see \cite{BF,Vo}) to the existence of
\begin{enumerate}[a)]
    \item An Ehresmann connection $H$;
 \item A bivector $\pi \in \Gamma(\wedge^2V)$;
 \item A form $\omega \in \Gamma(\wedge^2V^{\circ})$,
\end{enumerate}
such that
\begin{align*}
 \mathrm{Gr}(\pi_{\Sigma}) = \mathcal{R}_{\omega}(H) \oplus \mathcal{R}_{\pi}(H^{\circ}) \colon =\{u+\pi^{\sharp}(\xi)+\omega^{\sharp}(u)+\xi \ | \ u+\xi \in H\oplus H^{\circ}\},
\end{align*}in which case
 \begin{tasks}(2)
   \task $\pi$ is Poisson;
   \task $\mathscr{L}_{\mathrm{h}(u)}\pi=0$;
   \task $\mathrm{curv}(u,v) = \pi^{\sharp} \dd \omega(\mathrm{h}(v),\mathrm{h}(u))$,
   \task $\dd\omega(\mathrm{h}(u),\mathrm{h}(v),\mathrm{h}(w))=0$;
  \end{tasks}for all $u,v,w \in \X(M)$. Here $\mathrm{h}$ denotes the horizontal lift of $H$ and $\mathrm{curv}(u,v) \colon = \mathrm{h}([u,v])-[\mathrm{h}(u),\mathrm{h}(v)]$ denotes its curvature. Finally, if the submersion
  \begin{align*}
   p \colon (\Sigma,\pi_{\Sigma}) \rmap (M,\pi_M)
  \end{align*}has connected fibres, then it is a Poisson map for some Poisson structure $\pi_M$ on $M$ exactly when $\omega$ is closed, in which case $\pi_M$ is symplectic, $\omega$ is the pullback of the closed form on $M$ corresponding to $\pi_M$, and $H$ is involutive \cite[Corollary 1]{PT1.75}.
  
\begin{example}[Symplectic base]\label{ex : PD submersion with symplectic base}\normalfont
 For a Poisson submersion $p \colon (\Sigma,\pi_{\Sigma}) \to (M,\pi_M)$, we have that
 \begin{align*}
  \mathrm{Gr}(\pi_{\Sigma}) \cap \mathrm{Gr}(V) = \{\pi_{\Sigma}^{\sharp}(p^*\xi)+p^*\xi  \ | \ \xi \in \ker \pi_M^{\sharp}\}.
 \end{align*}Therefore a Poisson submersion is automatically coupling if its base $(M,\pi_M)$ is symplectic.
\end{example}

\begin{theorem}\label{thm : restriction of PD submersion over leaf}
 A Poisson submersion with Poisson fibres $p \colon (\Sigma,\pi_{\Sigma}) \to (M,\pi_M)$ pulls symplectic leaves of 
 $M$ back to Poisson submanifolds of $\Sigma$, and $p$ restricts to coupling Poisson submersions
 \begin{align*}
  p|_{\mathrm{S}_{\Sigma}(x)} \colon (\mathrm{S}_{\Sigma}(x),\omega_{\mathrm{S}_{\Sigma}(x)}) \rmap (\mathrm{S}_M(px),\omega_{\mathrm{S}_M(px)})
 \end{align*}for each $x \in \Sigma$.
\end{theorem}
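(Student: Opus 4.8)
The plan is to prove the two assertions in turn, fixing $x \in \Sigma$ and writing $S := \mathrm{S}_M(px)$ for the symplectic leaf through $px$ and $X := p^{-1}(S)$ for its preimage. Since $p$ is a submersion, $X$ is a submanifold whose conormal bundle is the pullback $N^*X = p^*(N^*S)$; in particular $N^*X \subset p^*(T^*M) = V^\circ$, where $V = \ker p_*$. I record the two facts I will lean on: first, that $S$, being a symplectic leaf, is a Poisson submanifold of $M$, so $\pi_M^\sharp(N^*S)=0$; and second, that coregularity of the fibres guarantees, via their Poisson-Dirac property, the pointwise identity $\pi_\Sigma^\sharp(V^\circ) \cap V = 0$ already exploited in Theorem \ref{thm : coregular submersion pulls back coregular}.

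For the first assertion --- that $X$ is a Poisson submanifold --- I would bound $\pi_\Sigma^\sharp(N^*X)$ in two ways. On the one hand, writing an element of $N^*X$ as $p^*\eta$ with $\eta \in N^*S$, the Poisson-map relation $p_* \circ \pi_\Sigma^\sharp \circ p^* = \pi_M^\sharp$ gives $p_*(\pi_\Sigma^\sharp(p^*\eta)) = \pi_M^\sharp(\eta) = 0$, so $\pi_\Sigma^\sharp(N^*X) \subset V$. On the other hand, $N^*X \subset V^\circ$ forces $\pi_\Sigma^\sharp(N^*X) \subset \pi_\Sigma^\sharp(V^\circ)$. Intersecting the two inclusions and invoking $\pi_\Sigma^\sharp(V^\circ) \cap V = 0$ yields $\pi_\Sigma^\sharp(N^*X)=0$, which is exactly the statement that $\pi_\Sigma$ is tangent to $X$. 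This step is the heart of the matter: a general Poisson submersion yields only $\pi_\Sigma^\sharp(N^*X) \subset V$ (so that $X$ is merely coisotropic), and it is precisely coregularity that upgrades this inclusion to vanishing.

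For the second assertion I would first note that $\mathrm{S}_\Sigma(x) \subset X$, since the tangency of $\pi_\Sigma$ to the Poisson submanifold $X$ just established confines the Hamiltonian flow through $x$ to $X$ (equivalently, the Poisson map $p$ carries $\mathrm{S}_\Sigma(x)$ into $S$). The restriction $p|_{\mathrm{S}_\Sigma(x)} : \mathrm{S}_\Sigma(x) \to S$ is then a Poisson map between the (symplectic) leaves: as each leaf is a Poisson submanifold carrying the restriction of the ambient bivector, the identity $\wedge^2 p_* \, \pi_{\Sigma,y} = \pi_{M,py}$ restricts verbatim to leaf tangent spaces. It is moreover a submersion, since for $\eta \in T^*_{py}M$ one has $\pi_M^\sharp(\eta) = p_*(\pi_\Sigma^\sharp(p^*\eta)) \in p_*(T_y\mathrm{S}_\Sigma(y))$, so $p_*$ surjects $T_y\mathrm{S}_\Sigma(y)$ onto $T_{py}\mathrm{S}_M(py)$; surjectivity onto the whole leaf $S$ follows by lifting the Hamiltonian paths generating $S$ along the $p$-related Hamiltonian flows $\mathrm{H}_{f\circ p}$, which preserve $\mathrm{S}_\Sigma(x)$.

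Finally, because the base $S = \mathrm{S}_M(px)$ is symplectic, Example \ref{ex : PD submersion with symplectic base} applies directly: a Poisson submersion onto a symplectic base is automatically coupling. Hence $p|_{\mathrm{S}_\Sigma(x)}$ is a coupling Poisson submersion, which completes the argument. I expect the only delicate point to be the first assertion --- isolating the two inclusions for $\pi_\Sigma^\sharp(N^*X)$ and recognizing that coregularity supplies exactly the transversality $\pi_\Sigma^\sharp(V^\circ)\cap V = 0$ needed to kill the conormal image; assembling the leaf restriction and citing the symplectic-base criterion is then routine.
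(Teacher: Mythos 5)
Your argument is correct where it matters, and for the most part it is the paper's proof with the first step unpacked. For the assertion that $X=p^{-1}(\mathrm{S}_M(px))$ is a Poisson submanifold, the paper argues that $X$ is coisotropic (as the preimage of a coisotropic submanifold under a Poisson map), invokes Theorem \ref{thm : coregular submersion pulls back coregular} to see that $X$ is coregular --- in particular Poisson--Dirac, so that $\pi_{\Sigma}^{\sharp}(N^*X)\cap TX=0$ --- and concludes via Example \ref{ex : coisotropic induced is PS}. Your two inclusions $\pi_{\Sigma}^{\sharp}(N^*X)\subset V$ and $\pi_{\Sigma}^{\sharp}(N^*X)\subset\pi_{\Sigma}^{\sharp}(V^{\circ})$, combined with $\pi_{\Sigma}^{\sharp}(V^{\circ})\cap V=0$, are exactly that linear algebra carried out by hand; this buys you independence from Theorem \ref{thm : coregular submersion pulls back coregular} at no real cost. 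The endgame (a Poisson map to a symplectic manifold is a submersion; a Poisson submersion over a symplectic base is automatically coupling, Example \ref{ex : PD submersion with symplectic base}) coincides with the paper's.

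The one step that fails is your claim that $p|_{\mathrm{S}_{\Sigma}(x)}$ surjects onto $\mathrm{S}_M(px)$ ``by lifting the Hamiltonian paths generating $S$''. The lifted Hamiltonian vector field $\mathrm{H}_{f\circ p}$ need not be complete even when $\mathrm{H}_f$ is, so the lifted path can escape to infinity in $\Sigma$ before the base path is traversed. Example \ref{ex : couplings over leaves need not be surjective} exhibits precisely this: there $\mathrm{S}_{\Sigma}(0)=\{(\arctan z,y,z)\}$ maps onto the proper strip $(-\tfrac{\pi}{2},\tfrac{\pi}{2})\times\R$ of $\mathrm{S}_M(0)=\R^2$. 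Fortunately the theorem does not assert surjectivity, and nothing else in your argument uses it --- the coupling criterion of Example \ref{ex : PD submersion with symplectic base} is a pointwise computation on the total space --- so deleting that sentence leaves a complete proof.
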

\begin{proof}
The symplectic leaf $(\mathrm{S}_M(px),\omega_{\mathrm{S}_M(px)})$ of $(M,\pi_M)$ through $px \in M$
is in particular a coisotropic submanifold; hence so its preimage $p^{-1}(\mathrm{S}_M(px)) \subset \Sigma$ under 
the Poisson map $p$. However, because $\mathrm{S}_M(px)$ is a coregular Poisson-Dirac submanifold, 
it follows from Theorem \ref{thm : coregular submersion pulls back coregular} 
that $p^{-1}\mathrm{S}_M(px)$ is a coregular Poisson-Dirac submanifold, and that 
\begin{align*}
 p|_{p^{-1}\mathrm{S}_M(px)} \colon (p^{-1}\mathrm{S}_M(px),\pi_{p^{-1}\mathrm{S}_M(px)}) \rmap (\mathrm{S}_M(px),\omega_{\mathrm{S}_M(px)})
\end{align*}is a Poisson submersion with Poisson fibres. This implies by Example \ref{ex : coisotropic induced is PS} that $p^{-1}\mathrm{S}_M(px)$ is a Poisson submanifold, and by Example \ref{ex : PD submersion with symplectic base} that $p \colon (p^{-1}\mathrm{S}_M(px),\pi_{p^{-1}\mathrm{S}_M(px)}) \to (S_M,\pi_{S_M})$ is a coupling Poisson submersion. Hence if $\mathrm{S}_{\Sigma}(x)$ denotes the symplectic leaf of $\Sigma$ through $x$, then
\begin{align*}
 p|_{\mathrm{S}_{\Sigma}(x)} \colon (\mathrm{S}_{\Sigma}(x),\omega_{\mathrm{S}_{\Sigma}(x)}) \rmap (\mathrm{S}_M(px),\omega_{\mathrm{S}_M(px)})
\end{align*}is a Poisson map (because the inclusion of a symplectic leaf is Poisson), and because $\mathrm{S}_M(px)$ is symplectic, $p|_{\mathrm{S}_{\Sigma}(x)} \colon \mathrm{S}_{\Sigma}(x) \to \mathrm{S}_M(px)$ must be a submersion, and again by Example \ref{ex : PD submersion with symplectic base}, it is automatically coupling.
\end{proof}

Beware that such couplings over leaves need \emph{not} be surjective, as illustrated by the example below.

\begin{example}\label{ex : couplings over leaves need not be surjective}\normalfont \emph{An example of a Poisson submersion with Poisson fibres whose restriction over a leaf is not surjective.} Consider the surjective Poisson submersion
 \begin{align*}
  & p \colon (\Sigma,\pi_{\Sigma}) = \left(\R^3,\tfrac{\partial}{\partial x} \wedge \tfrac{\partial}{\partial y}+(1+z^2)\tfrac{\partial}{\partial z} \wedge \tfrac{\partial}{\partial y}\right) \rmap \left(\R^2,\tfrac{\partial}{\partial x} \wedge \tfrac{\partial}{\partial y}\right)=(M,\pi_M), & p(x,y,z)=(x,y).
 \end{align*}Note that \[\mathrm{S}_{\Sigma}(0) = \{(\arctan(z),y,z) \ | \ y,z \in \R\},\]and therefore the restriction $p|_{\mathrm{S}_{\Sigma}(0)} \colon \mathrm{S}_{\Sigma}(0) \to \mathrm{S}_{M}(0) = M$ is not surjective.
\end{example}

\subsection{Pencils}

By the description of a coupling Poisson submersion
\[
 p \colon (\Sigma,\pi_{\Sigma}) \rmap (M,\omega_M),
\]given above, the Poisson structure $\pi_{\Sigma}$ on the total space splits as a sum
\[
 \pi_{\Sigma}=\pi_V+\pi_H,
\]where $\pi_V$ is the vertical Poisson structure inducing the given Poisson structures on fibres, and $\pi_H$ is a regular Poisson structure obtained by pulling $\omega_M$ back to a flat Ehresmann connection on $p \colon \Sigma \to M$. This trivially implies that $\pi_V,\pi_H$ \ul{commute}.

This phenomenon holds true in the setting proposed in \cite{Va} to generalize the coupling condition:

\begin{example}[Almost-coupling]\label{ex : almost-coupling Poisson submersion}\normalfont
 An \emph{almost-coupling} Poisson submersion $p \colon (\Sigma,\pi_{\Sigma}) \to (M,\pi_M)$ is one, such that an Ehresmann connection $H \subset T\Sigma$ exists, for which $\pi_{\Sigma}(H^{\circ},V^{\circ})=0$. A such connection splits $\pi_{\Sigma}$ as a sum $\pi_{\Sigma}=\pi_V+\pi_H$, where $\pi_V$ and $\pi_H$ are the vertical- and horizontal bivectors defined by
 \begin{align*}
& \pi_V^{\sharp}(\xi)=
\begin{cases}
    0, & \xi \in V^{\circ};\\
    \pi_{\Sigma}^{\sharp}(\xi), & \xi \in H^{\circ}
\end{cases}, & \pi_H^{\sharp}(\xi)=
\begin{cases}
    0, & \xi \in H^{\circ};\\
    \pi_{\Sigma}^{\sharp}(\xi), & \xi \in V^{\circ}.
\end{cases}
\end{align*}Moreover, because $p$ is Poisson, it follows that the horizontal bivector $\pi_H$ is of the form $\pi_H=\mathrm{h}(\pi_M)$, where $\mathrm{h} \colon \X^{\bullet}(M) \to \X^{\bullet}(\Sigma)$ denotes the horizontal lift (of multivectors) associated with $H$. This implies that in the induced bigrading $\X^{p,q}(\Sigma) = \Gamma(\wedge^p V \otimes \wedge^q H)$, we have
\begin{align*}
 [\pi_V,\pi_V] \in \X^{3,0}(\Sigma), \qquad [\pi_V,\pi_H] \in \X^{2,1}(\Sigma), \qquad [\pi_H,\pi_H] \in \X^{1,2}(\Sigma)\oplus \X^{0,3}(\Sigma).
\end{align*}Hence $\pi_{\Sigma}=\pi_V+\pi_H$ Poisson implies that $\pi_V$ and $\pi_H$ are commuting Poisson structures,
\begin{align*}
 [\pi_V,\pi_V]=0, \qquad [\pi_V,\pi_H]=0, \qquad [\pi_H,\pi_H]=0.
\end{align*}
\end{example}

This motivates our next

\begin{definition}\label{def : orthogonal pencil}\normalfont
 An {\bf orthogonal pencil} is a Poisson submersion $p \colon (\Sigma,\pi_{\Sigma}) \to (M,\pi_M)$ in which $\pi_{\Sigma}$ splits into commuting Poisson structures $\pi_{\Sigma} = \pi_V+\pi_H$, 
 where $\pi_V \in \Gamma(\wedge^2V)$ is a vertical bivector, and $\pi_H^{\sharp}(T^*\Sigma) \cap V = 0$. 
\end{definition}

Note that an orthogonal pencil $p \colon (\Sigma,\pi_{\Sigma}) \to (M,\pi_M)$ is automatically a Poisson submersion with Poisson fibres, and that the splitting $\pi_{\Sigma} = \pi_V+\pi_H$ is unique. Moreover, an almost-coupling submersion $p \colon (\Sigma,\pi_{\Sigma}) \to (M,\pi_M)$ is tantamount to an orthogonal pencil for which $\pi_H$ is tangent to an Ehresmann connection for $p$.

    \begin{example}\label{ex: orthogonal pencil not almost coupling}\normalfont \emph{An example of an orthogonal pencil which is not almost-coupling.} Consider on $\Sigma \colon =\mathbb{C}^2$, with coordinates
    \begin{align*}
        & z_0 = x_0+iy_0 \in \mathbb{C}, & z_1 = x_1+iy_1 \in \mathbb{C},
    \end{align*}the Euler and rotational vector fields: 
      \begin{align*}
    & \mathscr{E}_i=x_i\tfrac{\bd}{\bd x_i} + y_i\tfrac{\bd}{\bd y_i}, & \mathscr{V}_i=x_i\tfrac{\bd}{\bd y_i} - y_i\tfrac{\bd}{\bd x_i}.
    \end{align*}
    Then there is a surjective Poisson submersion 
  \begin{align*}
     & p \colon (\Sigma,\pi_\Sigma)\to (M,\pi_M) & p(z_0,z_1) = z_0,
    \end{align*}where $M \colon = \mathbb{C}$ and
    \begin{align*}
     & \pi_{\Sigma} = |z_0|^2(\mathscr{E}_0 + \mathscr{E}_1)\wedge (\mathscr{V}_0+\mathscr{V}_1), & \pi_M=|z_0|^2\mathscr{E}_0 \wedge \mathscr{V}_0.
\end{align*}
    Note that
    \begin{align*}
    \pi_{\Sigma}^{\sharp}(T^*\Sigma) = \begin{cases}
                                        \langle \mathscr{E}_0 + \mathscr{E}_1,\mathscr{V}_0+\mathscr{V}_1\rangle & \text{if} \ z_0 \neq 0;\\
                                        0 & \text{if} \ z_0 = 0
                                       \end{cases}
    \end{align*}meets the vertical bundle of $p \colon\Sigma \to M$ trivially, and therefore $p \colon (\Sigma,\pi_{\Sigma}) \to (M,\pi_M)$ is a Poisson submersion with Poisson fibres in which fibres all have the trivial Poisson structure. This implies that this is in fact an orthogonal pencil. However, it is not almost-coupling: an Ehresmann connection for which it is almost-coupling would coincide with $H=\pi_{\Sigma}^{\sharp}(T^*\Sigma)$ on the locus $z_0 \neq 0$, but
    \begin{align*}
\underset{x_0 \to 0}{\lim } \ \underset{x_1 \to 0}{\lim } \ \underset{y_0 \to 0}{\lim } \ \underset{y_1 \to 0}{\lim }H=\langle \tfrac{\partial}{\partial x_0}, \tfrac{\partial}{\partial y_0}\rangle \neq \langle \tfrac{\partial}{\partial x_1}, \tfrac{\partial}{\partial y_1}\rangle = \underset{x_1 \to 0}{\lim } \ \underset{x_0 \to 0}{\lim } \ \underset{y_1 \to 0}{\lim } \ \underset{y_0 \to 0}{\lim }H
    \end{align*}shows that $H|_{z_0 \neq 0}$ cannot extend to a global Ehresmann connection.
    \end{example}

While general Poisson submersions with Poisson fibres need not admit orthogonal splittings, they have in some sense a canonical candidate for the job:
  
\begin{proposition}\label{pro : singular horizontal foliation} The equivalence relation on the total space of a Poisson submersion with Poisson fibres $p \colon (\Sigma,\pi_{\Sigma}) \to (M,\pi_M)$ in which $x_0,x_1 \in \Sigma$ are equivalent iff there is $f \in C^{\infty}_c(M \times I)$, such that
\begin{align*}
 \phi_{\mathrm{H}_{f \circ p}}^{1,0}(x_0) = x_1
\end{align*}defines a singular foliation $\mathcal{S}_H$, each of whose leaves carries a canonical symplectic form.
\end{proposition}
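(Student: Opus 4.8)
The plan is to realize the stated equivalence relation as the orbit partition of an integrable singular distribution, and then to equip each orbit with a symplectic form extracted from $\pi_{\Sigma}$. Throughout write $V=\ker p_*$, so that $p^*T^*M = V^{\circ}$, and set $F:=\pi_{\Sigma}^{\sharp}(V^{\circ})$.

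First I would pin down the infinitesimal generators. Because $p$ is Poisson, for $f\in C^{\infty}(M)$ the Hamiltonian vector field $\mathrm{H}_{f\circ p}=\pi_{\Sigma}^{\sharp}(p^*\dd f)$ is $p$-related to $\mathrm{H}_f$ on $M$, and $[\mathrm{H}_{f\circ p},\mathrm{H}_{g\circ p}]$ is again of the form $\mathrm{H}_{h\circ p}$, for $h=\pm\{f,g\}_M\in C^{\infty}(M)$, using $\{f\circ p,g\circ p\}_{\Sigma}=\{f,g\}_M\circ p$. Thus the family $\mathcal{H}:=\{\mathrm{H}_{f\circ p}\}$ is closed under the commutator, and its pointwise span is exactly $F$, since $\dd f_{px}$ sweeps out all of $T^*_{px}M$. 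Choosing coordinate functions on $M$ near $p(x)$ and lifting them exhibits the $C^{\infty}$-submodule $\mathcal{S}_H\subset\X_c(\Sigma)$ generated by $\mathcal{H}$ as locally finitely generated; combined with the bracket computation, this presents $\mathcal{S}_H$ as a singular foliation in the sense of the Conventions. The equivalence classes in the statement are precisely the leaves of $\mathcal{S}_H$: a composition of autonomous flows $\phi_{\mathrm{H}_{f_i\circ p}}$ is the time-one flow $\phi^{1,0}_{\mathrm{H}_{f\circ p}}$ of a single time-dependent Hamiltonian with $f\in C^{\infty}_c(M\times I)$, while multiplying a generator by a function of $\Sigma$ only reparametrizes its flow lines and hence leaves the orbits unchanged.

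Second, I would verify integrability in the Stefan--Sussmann sense by checking that $F$ is invariant under the flows of $\mathcal{H}$. The flow $\phi_t$ of any $\mathrm{H}_{g\circ p}$ is a Poisson diffeomorphism of $(\Sigma,\pi_{\Sigma})$ which, being $p$-related to the flow of $\mathrm{H}_g$ on $M$, carries fibres to fibres; hence $(\phi_t)_*V=V$ and $(\phi_t^{-1})^*V^{\circ}=V^{\circ}$. Together with the Poisson identity $(\phi_t)_*\circ\pi_{\Sigma}^{\sharp}=\pi_{\Sigma}^{\sharp}\circ(\phi_t^{-1})^*$ this yields $(\phi_t)_*F=F$, so $F$ is an invariant (hence integrable) singular distribution whose maximal integral leaves are the orbits of $\mathcal{H}$, i.e.\ the claimed equivalence classes; this is the singular foliation $\mathcal{S}_H$.

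Third, I would install the symplectic form on a leaf $L$, where $T_xL=F_x=\pi_{\Sigma}^{\sharp}(V^{\circ}_x)$. Define $\omega_L(\pi_{\Sigma}^{\sharp}\alpha,\pi_{\Sigma}^{\sharp}\beta):=\pi_{\Sigma}(\alpha,\beta)$ for $\alpha,\beta\in V^{\circ}$. Skew-symmetry of $\pi_{\Sigma}$ shows this is independent of the chosen primitives (if $\pi_{\Sigma}^{\sharp}(\alpha-\alpha')=0$ then $\pi_{\Sigma}(\alpha-\alpha',\beta)=-\langle\beta,\pi_{\Sigma}^{\sharp}(\alpha-\alpha')\rangle=0$) and antisymmetric. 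Nondegeneracy is precisely the coregularity condition $F\cap V=0$: if $\omega_L(v,\cdot)=0$ then $\langle\beta,v\rangle=0$ for all $\beta\in V^{\circ}$, forcing $v\in(V^{\circ})^{\circ}=V$, whence $v=0$. Finally, since $F\subset\pi_{\Sigma}^{\sharp}(T^*\Sigma)$, each leaf $L$ lies inside a symplectic leaf $(S,\omega_S)$ of $(\Sigma,\pi_{\Sigma})$, and $\omega_L$ coincides with the pullback of $\omega_S$ to $L$; this identification makes closedness automatic, so $\omega_L$ is the desired canonical symplectic form.

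The main obstacle is the second step: establishing that $F$ is genuinely integrable and that its leaves coincide with the flow-defined equivalence classes, rather than a coarser or finer partition. The cleanest route is the observation that every generating flow is \emph{simultaneously} Poisson and fibre-preserving, which forces the invariance $(\phi_t)_*F=F$. The coregularity hypothesis enters twice and decisively: it is what guarantees $F\cap V=0$, used both to keep the distribution away from the vertical directions and, at the end, to secure nondegeneracy of $\omega_L$.
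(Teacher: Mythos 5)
Your proposal is correct and follows essentially the same route as the paper: generate the module $\mathcal{S}_H$ by the lifts $\mathrm{H}_{f\circ p}$, observe it is locally finitely generated and involutive via $[\mathrm{H}_{f\circ p},\mathrm{H}_{g\circ p}]=\mathrm{H}_{\{f,g\}\circ p}$, identify its leaves with the flow-defined equivalence classes, and equip each leaf with the form $\omega(\mathrm{H}_{f\circ p},\mathrm{H}_{g\circ p})=\{f,g\}\circ p$, which is the restriction of the ambient leafwise symplectic form. Your added explicit checks (flow-invariance of $F=\pi_{\Sigma}^{\sharp}(V^{\circ})$ and nondegeneracy via $F\cap V=0$ from coregularity) are details the paper leaves implicit, not a different argument.
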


\begin{proof}
First observe that, because $p$ is a Poisson map,
\begin{align*}
 \pi_{\Sigma}^{\sharp} \circ p^* : \Omega^1(M) \rmap \X(\Sigma)
\end{align*}defines a Lie algebra map, when $\Omega^1(M)$ is given the Koszul bracket
\begin{align*}
 [\xi,\eta]^{\pi_M}:=\mathscr{L}_{\pi_{M}^{\sharp}(\xi)}\eta-\iota_{\pi_{M}^{\sharp}(\eta)}\dd\xi.
\end{align*}This implies that the pullback vector bundle $A:=p^*(T^*M)$ over $\Sigma$ carries a structure of Lie algebroid, with bracket and anchor determined by
\begin{align*}
 & [p^*(\xi),p^*(\eta)]_A = p^*[\xi,\eta]^{\pi_M}, & \rho_A(p^*(\xi)) = \pi_{\Sigma}^{\sharp} \circ p^*(\xi).
\end{align*}In the usual manner, $A$ determines on $\Sigma$ a singular foliation $\mathcal{S}_H$, whose tangent space is $\pi_{\Sigma}^{\sharp}(V^{\circ})$. Hence its leaves are the equivalence classes of the equivalence relation described in the statement, and they are submanifolds of the symplectic leaves of $\Sigma$: $\mathcal{S}_H(x) \subset \mathrm{S}_{\Sigma}(x)$. Therefore,
\begin{align*}
 \omega_{\mathcal{S}_H(x)}(\mathrm{H}_{f \circ p},\mathrm{H}_{g \circ p})=\{f,g\} \circ p,
\end{align*}is just the restriction of the symplectic form on $\mathrm{S}_{\Sigma}(x)$ to $\mathcal{S}_H(x)$.
\end{proof}

\begin{remark}\normalfont
 The following asymmetry is noteworthy: while the singular horizontal foliation $\mathcal{S}_H$ is defined for all Poisson submersions with Poisson fibres, the partition into leaves of the Poisson-Dirac structures on fibres need not in general define a singular foliation, as Example \ref{ex : coregular submersion with discontinuous Poisson structure on fibres} below illustrates.
\end{remark}

In contrast to almost-coupling Poisson submersions, for a general Poisson submersion with Poisson fibres $p \colon (\Sigma,\pi_{\Sigma}) \to (M,\pi_M)$, the singular foliation $\mathcal{S}_H$, with leaves canonically equipped with symplectic forms (described in Proposition \ref{pro : singular horizontal foliation}) need not in general arise from a Poisson structure --- just as, for a general Poisson submersion with Poisson fibres, the induced Poisson structures $\pi_{p^{-1}(x)}$ on the fibres $p^{-1}(x)\subset \Sigma$ need not vary smoothly with $x \in M$ --- that is, there need not be any vertical bivector $\pi_V \in \Gamma(\wedge^2V)$ which restricts on $p^{-1}(x)$ to $\pi_{p^{-1}(x)}$. In fact, these conditions are simultaneously satisfied:

\begin{theorem} \label{thm : orthogonal splittings}
For a Poisson submersion with Poisson fibres $p:(\Sigma,\pi_{\Sigma}) \to (M,\pi_M)$, the following assertions 
are equivalent:
\begin{enumerate}[i)]
\item It admits an orthogonal pencil.
\item There is a Poisson structure $\pi_H \in \X^2(\Sigma)$ whose symplectic leaves are those of $\mathcal{S}_H$.
 \item The Poisson structures on fibres assemble into a vertical Poisson structure $\pi_V \in \Gamma(\wedge^2V)$.
\item The linear family 
 \begin{align*}
  \mathrm{Gr}(\pi_{\Sigma}) \cap (V \oplus T^*\Sigma) + V^{\circ}\subset \mathbb{T}^*\Sigma
 \end{align*}
 is a vector bundle.
 \end{enumerate}
\end{theorem}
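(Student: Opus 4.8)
The plan is to establish the cycle $\mathrm{iii})\Rightarrow\mathrm{i})\Rightarrow\mathrm{ii})\Rightarrow\mathrm{iii})$ together with the equivalence $\mathrm{iii})\Leftrightarrow\mathrm{iv})$, so that all four conditions become equivalent. Throughout I use coregularity of $p$, i.e. $\pi_\Sigma^\sharp(V^\circ)\cap V=0$, and Example \ref{ex: vector space}, which guarantees that each fibre carries a well-defined induced Poisson structure; I write $\pi_V$ for the resulting pointwise family of bivectors on $V$. The single substantial point will be the commutativity of the two summands of the pencil, which I isolate in the implication $\mathrm{iii})\Rightarrow\mathrm{i})$.

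The equivalence $\mathrm{iii})\Leftrightarrow\mathrm{iv})$ is pure bookkeeping. An element of $\mathrm{Gr}(\pi_\Sigma)$ has its tangent leg in $V$ exactly when $\pi_\Sigma^\sharp(\eta)\in V$, so $\mathrm{Gr}(\pi_\Sigma)\cap(V\oplus T^*\Sigma)=\{\pi_\Sigma^\sharp(\eta)+\eta\mid\pi_\Sigma^\sharp(\eta)\in V\}$; adding $V^\circ$ frees the cotangent leg modulo $V^\circ$, and Example \ref{ex: vector space} identifies the family with the graph of the pointwise vertical bivector $\alpha\mapsto\pi_V^\sharp(\alpha|_V)$. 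This family has constant rank $\dim\Sigma$, so it is a smooth subbundle precisely when $\pi_V$ is a smooth section of $\wedge^2V$; fibrewise Poissonness then yields $[\pi_V,\pi_V]=0$, since the Schouten bracket of vertical bivectors is computed leafwise. Hence $\mathrm{iv})$ holds iff $\mathrm{iii})$ does.

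For $\mathrm{iii})\Rightarrow\mathrm{i})$, set $\pi_H:=\pi_\Sigma-\pi_V$. Using the characterization of the induced fibre structure one checks $\pi_H^\sharp(T^*\Sigma)=\pi_\Sigma^\sharp(V^\circ)=T\mathcal{S}_H$, whence $\pi_H^\sharp(T^*\Sigma)\cap V=0$ by coregularity; what remains --- and this is the hard part --- is that $\pi_V$ and $\pi_H$ commute. There is no Ehresmann connection available here, so the bigrading computation of Example \ref{ex : almost-coupling Poisson submersion} does not apply, and $[\pi_V,\pi_H]$ cannot be separated from $[\pi_H,\pi_H]$ by the Jacobi identity for $\pi_\Sigma$ alone. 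I would overcome this by restricting to the leaf preimages $N:=p^{-1}(\mathrm{S}_M(px))$. By Theorem \ref{thm : restriction of PD submersion over leaf} each such $N$ is a Poisson submanifold and $p|_N$ is a coupling Poisson submersion over the symplectic leaf; the description recalled at the start of this subsection writes $\pi_N=\pi_{V,N}+\pi_{H,N}$ with commuting summands, where $\pi_{V,N}$ is the vertical structure inducing the fibre Poisson structures. Since $V\subset TN$ and $N$ is a Poisson submanifold, both $\pi_V$ and $\pi_H$ are tangent to $N$; moreover $\pi_V|_N$ is vertical and induces the same fibre structures, so $\pi_V|_N=\pi_{V,N}$ by fibrewise uniqueness and hence $\pi_H|_N=\pi_{H,N}$. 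As the Schouten bracket of multivector fields tangent to $N$ restricts to the intrinsic one, $[\pi_V,\pi_H]|_N=[\pi_{V,N},\pi_{H,N}]=0$; since the $N$ cover $\Sigma$ and $[\pi_V,\pi_H]$ is tangent to each, $[\pi_V,\pi_H]=0$ identically. Then $[\pi_\Sigma,\pi_V]=[\pi_H,\pi_V]=0$ gives $[\pi_H,\pi_H]=[\pi_\Sigma,\pi_\Sigma]-2[\pi_\Sigma,\pi_V]+[\pi_V,\pi_V]=0$, so $\pi_\Sigma=\pi_V+\pi_H$ is an orthogonal pencil.

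The remaining implications are straightforward. For $\mathrm{i})\Rightarrow\mathrm{ii})$, the $\pi_H$ of the pencil is Poisson with symplectic distribution $\pi_H^\sharp(T^*\Sigma)=T\mathcal{S}_H$, and its leafwise symplectic form agrees with the canonical one of Proposition \ref{pro : singular horizontal foliation} on the generators $\mathrm{H}_{g\circ p}=\pi_H^\sharp(p^*\mathrm{d}g)$, so the symplectic leaves of $\pi_H$ are exactly those of $\mathcal{S}_H$. For $\mathrm{ii})\Rightarrow\mathrm{iii})$, put $\pi_V:=\pi_\Sigma-\pi_H$; as $\pi_H^\sharp(T^*\Sigma)=T\mathcal{S}_H$ meets $V$ trivially, the pointwise argument used above shows $\pi_V\in\Gamma(\wedge^2V)$ and that it restricts on each fibre to the induced Poisson structure, so these assemble smoothly. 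The entire difficulty is thus concentrated in the commutativity step, which the reduction to coupling submersions over symplectic leaves furnished by Theorem \ref{thm : restriction of PD submersion over leaf} resolves.
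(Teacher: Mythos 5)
Your proposal is correct, and the one genuinely delicate point --- the commutativity $[\pi_V,\pi_H]=0$ --- is handled by a different argument from the paper's. The paper closes the loop through iv) $\Rightarrow$ i): having produced the smooth vertical bivector $\pi_V$, it passes to the open dense subset $U\subset\Sigma$ on which $\pi_H^{\sharp}=\pi_{\Sigma}^{\sharp}-\pi_V^{\sharp}$ has locally constant rank, chooses there an Ehresmann connection to which $\pi_H$ is tangent, applies the bigrading computation of Example \ref{ex : almost-coupling Poisson submersion} to get $[\pi_V,\pi_H]|_U=0$, and concludes by density and continuity. You instead restrict to the preimages $N=p^{-1}(\mathrm{S}_M(px))$, which by Theorem \ref{thm : restriction of PD submersion over leaf} are Poisson submanifolds on which $p$ is coupling; there the splitting into commuting summands is already available, it agrees with $\pi_V|_N+\pi_H|_N$ by fibrewise uniqueness of the vertical part, and since the Schouten bracket of multivector fields tangent to a submanifold restricts to the intrinsic one and the $N$ partition $\Sigma$, you obtain $[\pi_V,\pi_H]=0$ pointwise everywhere. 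Both routes ultimately rest on the same bigraded computation for (almost-)coupling submersions, but yours trades the rank-stratification and continuity passage for the structure theorem over leaves, which is arguably cleaner; the price is that you must invoke the (standard, but worth stating explicitly) fact that the Schouten bracket of multivectors tangent to an initial submanifold restricts to the intrinsic bracket, and that you should note that the Poisson structure induced on a fibre from $(N,\pi_N)$ coincides with the one induced from $(\Sigma,\pi_{\Sigma})$ because $N$ is a Poisson submanifold (functoriality of $i^!$), so that the identification $\pi_V|_N=\pi_{V,N}$ is legitimate. Your remaining implications --- iii) $\Leftrightarrow$ iv), i) $\Rightarrow$ ii), ii) $\Rightarrow$ iii) --- match the paper's in substance and in level of detail (in particular, like the paper, you identify the leaves of $\pi_H$ with those of $\mathcal{S}_H$ only at the level of tangent spaces and leafwise forms; the openness-and-connectedness argument closing that gap is the same as in the cleanness lemma). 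One cosmetic slip: in the iii) $\Leftrightarrow$ iv) step, ``leafwise'' should read ``fibrewise''.
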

\begin{proof}
If $\pi_{\Sigma}$ splits into a pencil $\pi_{\Sigma}=\pi_V+\pi_H$, where $\pi_V$ is a vertical bivector, and $\pi_H^{\sharp}(T^*\Sigma) \cap V = 0$, then
\begin{align}\label{eq : horizontal bivector}
 & \pi_H^{\sharp}(\xi) = \pi_{\Sigma}^{\sharp}(\xi), & \xi \in V^{\circ},
\end{align}which implies that the tangent space at $x \in \Sigma$ to the leaf $\mathcal{S}_H(x)$ of the singular foliation $\mathcal{S}_H$ of Proposition \ref{pro : singular horizontal foliation} coincides with  the tangent space $\pi_H^{\sharp}(T^*_x\Sigma)$ to the leaf of $\pi_H$ through $x$, and the symplectic form on those spaces coincides as well, being the pullback of that on the ambient space. Therefore i) implies ii). 

On the other hand, if a Poisson structure $\pi_H$ on $\Sigma$ exists whose singular symplectic foliation coincides with that of $\mathcal{S}_H$, then $\pi_H$ is unique, and satisfies (\ref{eq : horizontal bivector}) --- which is to say that $\pi_V:=\pi_{\Sigma}-\pi_H \in \X^2(\Sigma)$ is a vertical bivector, which induces on the fibres of $p$ the same Poisson structure as $\pi_{\Sigma}$ does. Therefore ii) implies iii). 

Next observe that if a vertical bivector $\pi_V \in \Gamma(\wedge^2V)$ induces on the fibres of $p$ the same Poisson structure as $\pi_{\Sigma}$, then
\begin{align*}
 \mathrm{Gr}(\pi_V) = \mathrm{Gr}(\pi_{\Sigma}) \cap (V \oplus T^*\Sigma) + V^{\circ},
\end{align*}and therefore iii) implies iv). Finally, note that  the pullback of the Lagrangian family
\begin{align*}
 L_{\Sigma} := \mathrm{Gr}(\pi_{\Sigma}) \cap (V \oplus T^*\Sigma) + V^{\circ}
\end{align*} under the inclusion $i:X \to \Sigma$ of any fibre $X$ of $p$ coincides with that of $\mathrm{Gr}(\pi_{\Sigma})$:
\begin{align*}
 i^!(L_{\Sigma}) = i^!\mathrm{Gr}(\pi_{\Sigma}).
\end{align*}This implies that $L_{\Sigma} = \mathrm{Gr}(\pi_V)$ for a vertical Poisson bivector $\pi_V \in \Gamma(\wedge^2V)$. Because on an open, dense subset $U \subset \Sigma$ on which the rank of $\pi_H^{\sharp}:=\pi_{\Sigma}^{\sharp}-\pi_V^{\sharp}$ is locally constant, there is an Ehresmann connection $H \subset TU$ for $p$ to which $\pi_H$ is tangent. Hence $p:(U,\pi_{\Sigma}) \to (pU,\pi_M)$ is an almost-coupling Poisson submersion, and $H$ induces the splitting $\pi_{\Sigma}|_U = \pi_V|U + \pi_H|_U$. By Example \ref{ex : almost-coupling Poisson submersion}, $\pi_V|_U$ and $\pi_H|_U$ are commuting Poisson structures, and because $U$ is dense in $\Sigma$, this implies that $\pi_{\Sigma} = \pi_V + \pi_H$ is an orthogonal pencil.
\end{proof}

\begin{example}\normalfont\label{ex : coregular submersion with discontinuous Poisson structure on fibres}
 \emph{An example of a Poisson submersion with Poisson fibres which is not an orthogonal pencil.} The quotient map of the action of scalar multiplication on $\Sigma=\mathbb{C}^{2} \diagdown \{0\}$,
\begin{align*}
 & \mathbb{C}^{\times} \times \Sigma \rmap \Sigma, & w \cdot (z_0,z_1):=(wz_0,wz_1),
\end{align*}gives rise to a submersion
\begin{align*}
 & p:\Sigma \to M=\mathbb{C}P^1, & p(z_0,z_1):=[z_0:z_1],
\end{align*}whose vertical bundle is spanned by $\mathscr{E}_0+\mathscr{E}_1$ and $\mathscr{V}_0+\mathscr{V}_1$ (using the notation of Example \ref{ex: orthogonal pencil not almost coupling}). The Poisson bivector
\begin{align*}
 \pi_{\Sigma} = \tfrac{1}{4}(\mathscr{E}_0 -\mathscr{E}_1)\wedge (\mathscr{V}_0-\mathscr{V}_1)
\end{align*}is $\mathbb{C}^{\times}$-invariant, and thus gives rise to a Poisson submersion $p:(\Sigma,\pi_{\Sigma}) \to (M,\pi_M)$, where $\pi_M=\mathscr{E}\wedge \mathscr{V}$, and where $\mathscr{E},\mathscr{V} \in \X(\mathbb{C}P^1)$ are vector fields which restrict in a standard affine chart restrict to the Euler and rotational vector fields (or their opposites) of $\mathbb{C}$. Note that the image of $\pi_{\Sigma}$ is spanned by $\mathscr{E}_0 -\mathscr{E}_1$ and $\mathscr{V}_0-\mathscr{V}_1$; hence the induced Poisson structure on the fibre $\Sigma_{[z_0:z_1]}$ over $[z_0:z_1] \in M$ is trivial if $z_0z_1 \neq 0$, and it is symplectic otherwise. Hence the Poisson structures on fibres do not even vary continuously, and by Theorem \ref{thm : orthogonal splittings}, no orthogonal pencil exists, nor is $\mathcal{S}_H$ the partition into leaves of a Poisson structure on $\Sigma$.
\end{example}

\section{Examples from Lie theory}\label{sec : Examples}

\subsection{Poisson-Lie groups}

A splitting of a Lie algebra $\mathfrak{d}$ into Lie subalgebras $\mathfrak{d} = \mathfrak{g} \oplus \mathfrak{h}$ is called a {\bf Manin triple} if $\mathfrak{d}$ is equipped with an invariant, symmetric bilinear pairing $\abk : S^2\mathfrak{d} \to \R$ for which the subalgebras $\mathfrak{g}$ and $\mathfrak{h}$ are Lagrangian. 

A {\bf $G$-invariant Manin triple} $(\mathfrak{d},\abk,\mathfrak{g},\mathfrak{h})$ is a Manin triple equipped
with a choice of Lie group $G$ with Lie algebra $\mathfrak{g}$,
and an extension $\mathrm{Ad}:G \acts \mathfrak{d}$ of the adjoint action of $G$ on $\mathfrak{g}$ which integrates
the Lie bracket action $\mathrm{ad}: \mathfrak{g}\acts \mathfrak{d}$ and such that 
\begin{align*}
 & \mathrm{Ad}_g[v,w] = [\mathrm{Ad}_g(v),\mathrm{Ad}_g(w)], & \langle v,w \rangle = \langle \mathrm{Ad}_g(v),\mathrm{Ad}_g(w) \rangle
\end{align*}for all $g \in G$ and $v,w \in \mathfrak{d}$. In
that case, the quotient representation $\mathrm{Ad}:G \acts \mathfrak{d}/\mathfrak{g}=\mathfrak{g}^*$ is
the coadjoint action. 

To a $G$-invariant Manin triple there corresponds a {\bf Poisson-Lie group} structure $\pi_G$ on $G$ \cite{Dr} (see also \cite[Section 5]{MeD}, whose perspective we espouse here) --- that is, a Poisson bivector $\pi_G \in \X^2(G)$, for which multiplication
\begin{align*}
 \mathrm{m} : (G,\pi_G) \times (G,\pi_G) \rmap (G,\pi_G)
\end{align*}is a Poisson map. Explicitly, this means that, for all $g_1,g_2 \in G$,
\begin{align*}
 \pi_{G,g_1g_2} = {l_{g_1}}_*\pi_{G,g_2} + {r_{g_2}}_*\pi_{G,g_1},
\end{align*}where $l_{g_1}(g)=g_1g$ and $r_{g_2}(g)=gg_2$ stand for left- and right-multiplication. Indeed, the $G$-invariant Manin triple  $(\mathfrak{d},\abk,\mathfrak{g},\mathfrak{h})$ defines an infinitesimal {\bf dressing action} $\varrho : \mathfrak{d} \to \Gamma(TG)$, uniquely determined by the condition that
\begin{align}\label{eq : dressing}
 & \mathrm{Ad}_{g}(\iota_{\varrho(v)}\theta^L_g) = \mathrm{pr}_{\mathfrak{g}}\mathrm{Ad}_g(v), & (g,v) \in G \times \mathfrak{d},
\end{align}where $\theta^L \in \Omega^1(G;\mathfrak{g})$ denotes the left-invariant Maurer-Cartan form of $G$.
The infinitesimal dressing action extends in fact to a linear map
\begin{align}\label{eq : epsilon}
 & \epsilon : \mathfrak{d} \rmap \Gamma(\mathbb{T}G), & \epsilon(v) = \varrho(v)+\langle \theta^L,v\rangle,
\end{align}which satisfies
\begin{align*}
 & \langle v,w\rangle = \langle \epsilon(v),\epsilon(w)\rangle,& [v,w] = [\epsilon(v),\epsilon(w)],
\end{align*}and for the ensuing isomorphism $\epsilon : G \times \mathfrak{d} \to \mathbb{T}G$, we have
\begin{align*}
 & TG = \epsilon(G \times \mathfrak{g}), & \mathrm{Gr}(\pi_G) = \epsilon(G \times \mathfrak{h}).
\end{align*}Note that, by definition of $\pi_G$, Poisson submanifolds of $(G,\pi_G)$ are unions of orbits of the infinitesimal action $\rho : \mathfrak{h} \to \X(G)$; that is, $\mathfrak{h}$-invariant submanifolds of $G$.

\subsection{Coregular Poisson-Dirac submanifolds from orbits}

Let $(G,\pi_G)$ be a Poisson-Lie group. An action $\alpha:G \times M \to M$ of $G$ on a Poisson manifold $(M,\pi_M)$ is \emph{Poisson} if
\begin{align*}
 \alpha : (G,\pi_G) \times (M,\pi_M) \rmap (M,\pi_M)
\end{align*}is a Poisson map.

\begin{remark}\normalfont
 A Poisson action of $(G,\pi_G)$ on $(M,\pi_M)$ need not act by Poisson diffeomorphisms of $(M,\pi_M)$, unless $G$ is equipped with the trivial Poisson structure $\pi_G=0$.
\end{remark}

\begin{lemma}[Orbits]\label{lem:orbits}\normalfont
 For an orbit $X$ of a Poisson action of a Poisson Lie group $(G,\pi_G)$ on a Poisson manifold $(M,\pi)$, the following assertions are equivalent:
\begin{enumerate}[i)]
  \item it is a coregular Poisson-Dirac submanifold;
  \item it is Poisson-Dirac;
  \item $T_xX \subset (T_xM,\pi_{M,x})$ is Poisson-Dirac for some $x \in X$.
\end{enumerate}
\end{lemma}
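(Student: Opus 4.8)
The plan is to verify the cycle i) $\Rightarrow$ ii) $\Rightarrow$ iii) $\Rightarrow$ i), the first two implications being formal and the last carrying all the content. That i) $\Rightarrow$ ii) is exactly Lemma \ref{lem : coregular is PD}, which says coregular submanifolds are (split) Poisson-Dirac. For ii) $\Rightarrow$ iii) I would simply unwind the definition: if $i^!\mathrm{Gr}(\pi_M)$ corresponds to a Poisson structure, then it meets $TX$ trivially at every point, and at a given $x$ this is precisely the pointwise condition $\pi_{M,x}^\sharp(N_x^*X) \cap T_xX = 0$ of Example \ref{ex: vector space}. The only step using the orbit hypothesis is the passage iii) $\Rightarrow$ i): from transversality at a \emph{single} point to the global coregularity statement.

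Here is the mechanism I would use for iii) $\Rightarrow$ i). Write $\alpha_g := \alpha(g,\cdot)\colon M \to M$ and let $v \mapsto v_M$ denote the infinitesimal action $\gg \to \X(M)$, so that $T_pX = \{v_M(p) : v \in \gg\}$ for $p \in X$ and $\alpha_g$ restricts to a diffeomorphism of the orbit $X$. Evaluating the hypothesis that $\alpha$ is a Poisson map at $(g,x)$ on the split bivector $\pi_{G,g}\oplus\pi_{M,x}$ yields the transformation law
\[
 \pi_{M,gx} = B_g + (\alpha_g)_*\pi_{M,x}, \qquad B_g \in \textstyle\bigwedge^2 T_{gx}X,
\]
where $B_g$ is the image, under the second exterior power of the fundamental-vector-field map $\gg \to T_{gx}X$, of the right-translate of $\pi_{G,g}$ to $\bigwedge^2\gg$. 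The crucial point is that the $G$-directional derivative of $\alpha$, once right-trivialized, is the fundamental-vector-field map, so the correction term $B_g$ is \emph{tangent to the orbit}. I would then observe that for any $\xi \in N_{gx}^*X = (T_{gx}X)^\circ$ one has $\iota_\xi B_g = 0$, since $B_g$ is built from vectors in $T_{gx}X$ which $\xi$ annihilates. Hence the twisting disappears on the conormal bundle, leaving
\[
 \pi_{M,gx}^\sharp(\xi) = (\alpha_g)_*\,\pi_{M,x}^\sharp\big((\alpha_g)^*\xi\big), \qquad \xi \in N_{gx}^*X,
\]
where I also use that $(\alpha_g)^*$ carries $N_{gx}^*X$ isomorphically onto $N_x^*X$ because $\alpha_g$ preserves $X$.

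This identity says that $\pi_M^\sharp$, restricted to the conormal bundle, is intertwined along the orbit by the \emph{non-Poisson} diffeomorphisms $\alpha_g$; explicitly $\pi_{M,gx}^\sharp(N_{gx}^*X) = (\alpha_g)_*\big(\pi_{M,x}^\sharp(N_x^*X)\big)$. Two conclusions follow at once. First, since $G$ acts transitively on $X$ and $(\alpha_g)_*$ is a linear isomorphism, the rank of $\pi_M^\sharp$ on $N^*X$ is \emph{constant} along $X$; being the image of the smooth bundle map $\pi_M^\sharp\colon N^*X \to TM|_X$, a constant-rank image is a smooth subbundle. Second, transversality transports: $\pi_{M,gx}^\sharp(N_{gx}^*X) \cap T_{gx}X = (\alpha_g)_*\big(\pi_{M,x}^\sharp(N_x^*X) \cap T_xX\big)$, which vanishes for every $g$ precisely because it vanishes at $x$ by iii). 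Together these say that $\pi_M^\sharp(N^*X) \subset TM|_X$ is a subbundle meeting $TX$ trivially, i.e. $X$ is coregular, closing the cycle.

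I expect the main obstacle to be pinning down the transformation law and, especially, recognising that the correction term $B_g$ lands in $\bigwedge^2 TX$: this tangency is exactly what lets the non-Poisson maps $\alpha_g$ nonetheless transport the conormal data $\pi_M^\sharp|_{N^*X}$ faithfully, and it is the concrete incarnation here of the ``pointwise-to-global'' principle for coregular submanifolds. The remaining points — smoothness of the image bundle from constant rank, and the behaviour of annihilators under $\alpha_g$ — are routine.
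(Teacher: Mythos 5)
Your proof is correct and follows essentially the same route as the paper's: the key identity $\pi_{M,gx}^{\sharp}(\xi)={\alpha_g}_*\pi_{M,x}^{\sharp}(\alpha_g^*\xi)$ for $\xi\in N^*_{gx}X$ is exactly what the paper derives, except that the paper obtains it by pulling $\xi$ back under $\alpha$ and noting that the $G$-component $\alpha_x^*(\xi)$ vanishes --- the dual statement to your observation that the correction bivector $B_g$ lies in $\wedge^2T_{gx}X$. The two conclusions you then draw (constancy of rank, hence smoothness of $\pi_M^{\sharp}(N^*X)$, and transport of the trivial intersection with $TX$ from the single point $x$) are the paper's two displayed identities verbatim.
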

\begin{proof}
We need only show that condition iii) implies condition i). Let $g \in G$ and let $\xi \in N^*_{gx}X$. Because $\alpha^{-1}(X)=G \times X$, we have that
\begin{align*}
 \alpha^*(\xi) = (\alpha_x^*(\xi),\alpha_g^*(\xi)) = (0,\alpha_g^*(\xi)) \in N^*_{(g,x)}(G \times X) \subset  T^*_gG \times T^*_xM.
\end{align*}Because $\alpha$ is a Poisson map,
\begin{align*}
 \pi_{M,gx}^{\sharp}(\xi) & = \alpha_*(\pi_G,\pi_M)_{(g,x)}^{\sharp}\alpha^*(\xi) = \alpha_*(0,\pi_{M,x}^{\sharp}\alpha_g^*(\xi)) \\ & = {\alpha_g}_*\pi_{M,x}^{\sharp}\alpha_g^*(\xi) = 0,
\end{align*}In particular, this implies that
\begin{align*}
 & \pi_{M,gx}^{\sharp}(N^*_{gx}X) = {\alpha_g}_*\pi_{M,x}^{\sharp}(N^*_xX), & \pi_{M,gx}^{\sharp}(N^*_{gx}X) \cap T_{gx}X = {\alpha_g}_*\left(\pi_{M,x}^{\sharp}(N^*_xX)\cap T_{x}X \right),
\end{align*}where in the second equality we used condition iii). Hence $X$ is a coregular Poisson-Dirac submanifold.
\end{proof}

\begin{example}\label{ex : abelian actions}\normalfont
 If a vector space $A$ acts on a smooth manifold $M$, with action $\alpha:A \times M \to M$, and induced infinitesimal action
\begin{align*}
 & \rho : A \rmap \X(M), & \rho(v)_x:=\tfrac{d}{dt}\alpha(e^{-t}v,x)|_{t=0},
\end{align*}the induced map $\wedge^2\rho : \wedge^2\mathfrak{g} \to \X^2(M)$ maps into $A$-invariant Poisson 
bivectors on $M$, and
\begin{align*}
 & \alpha : (A,0) \times (M,\pi_M) \rmap (M,\pi_M), & \pi_M:=\wedge^2\rho(\pi_A)
\end{align*}is a Poisson action for any $\pi_A \in \wedge^2A$. 
Because the tangent space at $x \in M$ to the orbit $A \cdot x \subset M$ contains by 
construction $\pi_M^ {\sharp}(T^*_xM)$, they are all Poisson submanifolds. 
\end{example}

Note in the setting of Example \ref{ex : abelian actions} that, for any subspace $B \subset A$, the restricted action
\begin{align*}
 \alpha : (B,0) \times (M,\pi_M) \rmap (M,\pi_M)
\end{align*}is Poisson, but it need not be the case that its orbits $B \cdot x$ are 
coregular Poisson-Dirac submanifolds. For example, if $A=\C$ acts by translations on $M=\C$ with its standard symplectic structure, orbits of the subgroup $B=\R$ are not Poisson-Dirac. 

Let us borrow from Symplectic Geometry a useful setting in which orbits are automatically coregular Poisson-Dirac submanifolds:

\begin{definition}\label{def : positive bivector} 
 Let a vector space $A$ be equipped with a complex structure $J \in \mathrm{End}(A)$. A bivector $\pi_A \in \wedge^2A$ 
 is \emph{positive} if $J$ makes the symplectic leaves of $\pi_A$ into K\"ahler manifolds. 
\end{definition}Explicitly, $\pi_A$ is positive if $S:=\pi_A^{\sharp}(A^*)$ is a complex subspace,
$J:(S,\omega_S) \to (S,\omega_S)$ is a symplectic automorphism, and
 \begin{align*}
  & g_S:S \times S \rmap \R, & g_S(u,v):=\omega_S(u,Jv)
 \end{align*}is symmetric and positive-definite. Note that if $\pi_A$ is positive and $B \subset A$ is $J$-invariant,
 then for $\xi \in B^{\circ}$ such that $\pi_A^{\sharp}(\xi) \in B$,
 \begin{align*}
  0 = \langle \xi,J\pi_A^{\sharp}(\xi)\rangle = \pi_A(\xi,J^*\xi) = -\omega_S(\pi_A^{\sharp}(\xi),\pi_A^{\sharp}(J^*\xi))
  = \omega_S(\pi_A^{\sharp}(\xi),J\pi_A^{\sharp}(\xi)) = g_S(\pi_A^{\sharp}(\xi),\pi_A^{\sharp}(\xi))
 \end{align*}implies that the pertinent set $\pi_A^{\sharp}(B^{\circ}) \cap B$ in (\ref{eq : pointwise induced Poisson})
 is trivial, and so $B$ is a coregular Poisson-Dirac submanifold in $(A,\pi_A)$.

\begin{lemma}\label{lem : holomorphic actions}
 Let $A$ be a complex vector space equipped with a positive bivector $\pi_A \in \wedge^2A$. An action of $A$
 on a complex manifold $M$ by holomorphic transformations induces an $A$-invariant Poisson
 structure $\pi_M$ on $M$, with the property that, for all complex subspace $B \subset A$, the induced action 
 $(B,0) \acts (M,\pi_M)$ is Poisson and has coregular Poisson-Dirac submanifolds as orbits.
\end{lemma}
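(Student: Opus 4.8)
The plan is to obtain $\pi_M$ from Example \ref{ex : abelian actions} and then reduce coregularity of orbits to the pointwise linear-algebra statement already recorded for positive bivectors. Let $\rho:A \to \X(M)$ be the infinitesimal action. By Example \ref{ex : abelian actions}, the bivector $\pi_M:=\wedge^2\rho(\pi_A)$ is $A$-invariant and Poisson, the action $\alpha:(A,0)\times(M,\pi_M)\to(M,\pi_M)$ is Poisson, and for every subspace $B\subset A$ the restricted action $(B,0)\acts(M,\pi_M)$ is Poisson as well. It thus remains only to show that, when $B$ is a \emph{complex} subspace, the orbits of $(B,0)\acts(M,\pi_M)$ are coregular.

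Fix such a $B$ and a point $x\in M$, set $X:=B\cdot x$, and write $\rho_x:A\to T_xM$, $v\mapsto \rho(v)_x$, so that $T_xX=\rho_x(B)$. I would invoke Lemma \ref{lem:orbits} for the Poisson-Lie group $(B,0)$: since condition iii) there is pointwise, it suffices to check that $T_xX\subset(T_xM,\pi_{M,x})$ is Poisson-Dirac, which by Example \ref{ex: vector space} means
\[
 \pi_{M,x}^{\sharp}\big((\rho_x B)^{\circ}\big)\cap \rho_x B = 0.
\]
The two facts driving the computation are that $\pi_{M,x}=\wedge^2\rho_x(\pi_A)$, so that $\pi_{M,x}^{\sharp}=\rho_x\circ\pi_A^{\sharp}\circ\rho_x^{*}$, and --- crucially --- that $\rho_x$ is complex-linear, the orbit map $v\mapsto\alpha(v,x)$ being holomorphic. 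In particular $K:=\ker\rho_x$ is a complex subspace of $A$.

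The main obstacle is that $\rho_x$ need not be injective, so one cannot naively transport the coregularity of $B\subset(A,\pi_A)$ across $\rho_x$: knowing $\pi_{M,x}^{\sharp}(\eta)\in\rho_xB$ only yields $\pi_A^{\sharp}(\rho_x^{*}\eta)\in B+K$, not $\in B$. The remedy is to enlarge $B$ to $\widetilde{B}:=B+K$, which is again complex precisely because $K$ is. Given $\eta\in(\rho_xB)^{\circ}$ with $\pi_{M,x}^{\sharp}(\eta)\in\rho_xB$, set $\xi:=\rho_x^{*}(\eta)$; then $\xi\in\widetilde{B}^{\circ}$ (it annihilates $B$ since $\eta\in(\rho_xB)^{\circ}$, and annihilates $K$ since $\rho_xK=0$), while $\rho_x(\pi_A^{\sharp}\xi)=\pi_{M,x}^{\sharp}(\eta)\in\rho_xB$ forces $\pi_A^{\sharp}\xi\in B+K=\widetilde{B}$. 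Applying the computation following Definition \ref{def : positive bivector} to the complex subspace $\widetilde{B}$ gives $\pi_A^{\sharp}(\widetilde{B}^{\circ})\cap\widetilde{B}=0$, whence $\pi_A^{\sharp}\xi=0$ and therefore $\pi_{M,x}^{\sharp}(\eta)=\rho_x(\pi_A^{\sharp}\xi)=0$. This yields the displayed identity, so $T_xX$ is Poisson-Dirac and Lemma \ref{lem:orbits} concludes that $X$ is coregular. An alternative route, if one first checks that the push-forward $\wedge^2\rho_x(\pi_A)$ of a positive bivector is again positive (via the correspondence of positive bivectors with positive semidefinite Hermitian forms on the dual, under which push-forward becomes pullback), would apply the Definition \ref{def : positive bivector} computation directly to $\rho_xB\subset(T_xM,\pi_{M,x})$; I expect the $\widetilde{B}$ argument to be the more economical one, since it uses only what the excerpt already establishes.
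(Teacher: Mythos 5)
Your proof is correct and follows essentially the same route as the paper: reduce to a pointwise linear-algebra statement via Lemma \ref{lem:orbits} and Example \ref{ex : abelian actions}, then invoke the positivity computation following Definition \ref{def : positive bivector}. The only (cosmetic) difference is that where you enlarge $B$ to $\widetilde{B}=B+\ker\rho_x$ and run that computation inside $(A,\pi_A)$, the paper instead pushes $\pi_A$ forward to a positive bivector on $T_x(A\cdot x)\cong A/\ker\rho_x$ (identified with the restriction of $\pi_A$ to $\wedge^2(\ker\rho_x)^{\circ}$) and applies the computation to the complex subspace $T_x(B\cdot x)$ there; both variants hinge on $\ker\rho_x$ being a complex subspace.
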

\begin{proof}
By Lemma \ref{lem:orbits}, $B \cdot x \subset M$ is a coregular Poisson-Dirac submanifold iff
\begin{align*}
 T_x(B \cdot x) \subset (T_xM,\pi_{M,x})
\end{align*}has an induced Poisson structure, and this happens exactly when
\begin{align*}
 T_x(B \cdot x) \subset (T_x(A \cdot x),\pi_{A \cdot x,x})
\end{align*}has an induced Poisson structure, where $\pi_{A \cdot x}$ is the Poisson structure on the Poisson submanifold $A \cdot x \subset (M,\pi_M)$ (as in Example \ref{ex : abelian actions}). Because $T_x(B \cdot x) \subset T_x(A \cdot x)$ is a complex subspace,
it suffices to check that $\pi_{A \cdot x,x}$ is positive --- 
and that is the case because the complex-linear infinitesimal action at $x$,
\begin{align*}
 \rho_x : (A,\pi_A) \rmap (T_x(A \cdot x),\pi_{A \cdot x,x})
\end{align*}induces an identification
\begin{align*}
 & \rho_x : (A/A_x,\pi_{A/A_x}) \diffto (T_x(A \cdot x), & A_x:=\ker(\rho_x),
\end{align*}where $\pi_{A/A_x}$ is in turn identified with the restriction of $\pi_A$ to 
$\wedge^2A_x^ {\circ}$ --- and is therefore positive.
\end{proof}

\begin{example}
 On $A=\C$, the bivector $\pi_A=\tfrac{\partial}{\partial x}\wedge \tfrac{\partial}{\partial y}$ is positive.
 Acting on $M=\C$ by translations, there ensues $\pi_M=\tfrac{\partial}{\partial x}\wedge \tfrac{\partial}{\partial y}$,
 while the action $w \cdot z = e^wz$ produces $\pi_M=(x^2+y^2)\tfrac{\partial}{\partial x}\wedge \tfrac{\partial}{\partial y}$.
\end{example}

\subsection{Poisson submersions with Poisson fibres from toric varieties}
The previous results can be used to produce Poisson submersions with Poisson fibres associated with the quotient
presentation of a (smooth) projective toric variety. By a \emph{toric variety} we mean 
a  variety with a Zariski open subset identified to an algebraic torus whose action on itself by multiplication extends to
an action on the variety.  Toric varieties are given by the combinatorial data encoded in 
a fan. Projective toric varieties can also be described 
via suitable polytopes, a viewpoint which is useful to highlight the symplectic geometry 
nature of toric varieties.

\begin{example}\normalfont
We recall Delzant's Hamiltonian quotient construction of symplectic toric manifolds and 
the necessary modifications to induce the projective toric
variety structure via a GIT (for Geometric Invariant Theory) quotient. A symplectic manifold $(M^{2n},\omega_M)$ is \emph{toric}
  if it comes equipped with an effective Hamiltonian action
 \begin{align*}
  T \acts (M,\omega_M) \stackrel{\mu_M}{\rmap} \mathfrak{t}^*,
 \end{align*}
where $T$ is an $n$-dimensional (compact) torus $T \simeq \mathbb{T}^n$
and  $\mathfrak{t}$ denotes its Lie algebra. A \emph{polytope} $\varDelta \subset \mathfrak{t}^*$ is a compact subset of the form
\begin{align}\label{eq:Delzant}
 \varDelta = \bigcap_{i=1}^d\{ \xi \in  \mathfrak{t}^* \ | \ \langle \xi,u_i\rangle \geqslant c_i\}
\end{align}where $c_i \in \R$ and $u_1,...,u_d \in  \mathfrak{t}$, which are thought of as normal
to the faces of $\varDelta$. Such a polytope is called \emph{Delzant} if $u_1,...,u_d$ can be rescaled to lie in the lattice 
$\varLambda \subset \mathfrak{t}$ which is the kernel of the exponential map $\exp : \mathfrak{t} \to T$, and 
at each vertex of $\varDelta$ 
the vectors normal to faces of $\varDelta$ through the vertex form a $\mathbb{Z}$-basis of
$\varLambda$.

To a Delzant polytope  $\varDelta\subset \mathfrak{t}^{*}$ as in (\ref{eq:Delzant}) one associates the exact sequence of tori:
\[
 1 \rmap N \stackrel{i}{\rmap} \mathbb{T}^d \stackrel{p}{\rmap} T \rmap 1
\]where $p$ is uniquely determined by the condition that $p_*(e_i)=u_i$, where $e_1,...,e_d$ stands for the standard basis of $\mathbb{R}^d=\mathrm{Lie}(\mathbb{T}^d)$. If $\omega_{\mathrm{std}} \in \Omega^2(\C^d)$ denotes the standard symplectic structure $\omega_{\mathrm{std}}=\tfrac{i}{2}\sum_{i=1}^d\dd z_i \wedge \dd \overline{z}_i$, the standard action of $\mathbb{T}^d$ on $\C^d$ by multiplication gives rise to a Hamiltonian action
\begin{align*}
 & \mathbb{T}^d \acts (\C^d,\omega_{\mathrm{std}}) \stackrel{\mu}{\rmap} \mathrm{Lie}(\mathbb{T}^d)^*, & \mu(z) = \sum_{i=1}^d\left( \tfrac{|z_i|^2}{2}+c_i \right)e_i.
\end{align*}Then $\mu_N:=i^*\mu: \C^d \to \mathfrak{n}^*$ is 
a moment map for the action of the subtorus $N \acts (\C^d,\omega_{\mathrm{std}})$. The map $\mu_N$ is proper,  
zero is a regular value, and the action of $N$ on $\mu_N^{-1}(0)$ is free.
Consequently, $M_{\varDelta}:=\mu_N^{-1}(0)/N$ is a compact smooth manifold endowed 
 with a residual action of $T=\mathbb{T}^d/N$. This action is Hamiltonian for the symplectic form
 on $M$ coming from Hamiltonian reduction of $\omega_{\mathrm{std}}$ and the image of the moment map is $\Delta$.

To obtain the algebro-geometric quotient construction  $\mu_N^{-1}(0)$ is
enlarged to an open dense subset $\Sigma_\Delta\subset \C^{d}$
which can be described in several equivalent ways: It is the saturation of $\mu_N^{-1}(0)$ by the action of the complexification 
$N_\C$ of $N$. It is the subset of $\C^{d}$ where $N_\C$ acts freely and with closed orbits. It is the collection 
of orbits the standard action $(\C^{\times})^d \acts \C^d$ which intersect $\mu_N^{-1}(0)$. More explicitly, the orbits of 
$(\C^{\times})^d \acts \C^d$
 are parametrized by subsets $I$ of $\{1,2,...,d\}$:
  \begin{align*}
 \C^d_I := \{ z \in \C^d \ | \ z_i = 0 \ \Leftrightarrow \ i \in I\}.
\end{align*} 
Each subset $F(\varDelta)_I \subset \varDelta$,
\begin{align*}
 F(\varDelta)_I := \{ \xi \in \varDelta \ | \ \langle \xi,u_i\rangle = c_i \ \Leftrightarrow \ i \in I\},
\end{align*}is a face of $\varDelta$ if nonempty and 
\begin{align*}
 & \Sigma_{\varDelta}=\bigcup_{F(\varDelta)_I \neq \varnothing} \C^d_I
\end{align*} 
These equivalent descriptions give a canonical identification of the compact quotient 
 $M_\Delta$ with the GIT quotient $\Sigma_{\varDelta}/N_{\C}$. The outcome
 is a complex (projective) structure on $M_\Delta$ together with a complex action of $(\C^{\times})^d/N_{\C}$ 
 with an open dense orbit.
 \end{example}

\begin{proposition}\label{pro : positive GIT}
Let $p:\Sigma_{\varDelta} \to M_{\varDelta}$ be the GIT quotient construction of the toric variety $M_{\varDelta}$. 
Then every positive bivector $\pi \in \wedge^2 \C^d$ induces Poisson structures 
$\pi_{\Sigma}$ on $\Sigma_{\varDelta}$ and $\pi_{M}$ on $M_{\varDelta}$, for which the quotient map $p:(\Sigma_{\varDelta},\pi_{\Sigma}) \to (M_{\varDelta},\pi_M)$ is a Poisson submersion with Poisson fibres.
\end{proposition}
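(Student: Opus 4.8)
The plan is to recognize the GIT presentation $p:\Sigma_{\varDelta}\to M_{\varDelta}$ as an instance of the situation of Lemma \ref{lem : holomorphic actions}. Indeed, the complex torus $(\C^{\times})^d$ acts on $\C^d$ by coordinatewise multiplication, and precomposing with the exponential $\exp:\C^d\to(\C^{\times})^d$ turns its Lie algebra $A:=\C^d$ into a complex vector space acting on the complex manifold $\C^d$ by the holomorphic transformations $v\cdot z=(e^{v_1}z_1,\dots,e^{v_d}z_d)$. Under this identification the subtorus $N_{\C}\subset(\C^{\times})^d$ corresponds to the complex subspace $B:=\mathfrak{n}_{\C}=\operatorname{Lie}(N_{\C})\subset A$, and---because $\exp$ is surjective and $N_{\C}$ is connected---the $B$-orbits are exactly the $N_{\C}$-orbits, that is, the (connected) fibres of $p$.

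With this dictionary in place, I would first invoke Lemma \ref{lem : holomorphic actions}: a positive bivector $\pi\in\wedge^2\C^d=\wedge^2A$ induces an $A$-invariant Poisson structure on $\C^d$, which I restrict to the open, dense, $N_{\C}$-saturated subset $\Sigma_{\varDelta}$ to obtain $\pi_{\Sigma}$ (a restriction to an open set, hence again Poisson). The same Lemma asserts that the induced action of $(B,0)$ on $(\C^d,\pi)$ is Poisson---and since $B$ carries the trivial Poisson structure, this means $N_{\C}$ acts by Poisson diffeomorphisms---and that its orbits are coregular submanifolds. Because coregularity is a local condition and $\Sigma_{\varDelta}$ is open, the fibres of $p$, being these orbits, are coregular submanifolds of $(\Sigma_{\varDelta},\pi_{\Sigma})$.

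It remains to descend $\pi_{\Sigma}$ to $M_{\varDelta}=\Sigma_{\varDelta}/N_{\C}$, and this is the step I expect to require the most care. On the stable locus $\Sigma_{\varDelta}$ the group $N_{\C}$ acts freely and properly with closed orbits, so $p$ is a principal $N_{\C}$-bundle; the previous paragraph shows the action is by Poisson diffeomorphisms. For such an action Poisson reduction is automatic: if $f=p^*\bar f$ and $g=p^*\bar g$ are pullbacks of functions on $M_{\varDelta}$, hence $N_{\C}$-invariant, then for any fundamental vector field $\xi$ of the action one computes, using $\mathscr{L}_{\xi}\pi_{\Sigma}=0$, that $\mathscr{L}_{\xi}\{f,g\}=\{\mathscr{L}_{\xi}f,g\}+\{f,\mathscr{L}_{\xi}g\}=0$, so $\{f,g\}$ is again invariant and therefore basic. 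This defines a Poisson bracket on $M_{\varDelta}$ for which $p$ is a Poisson map; equivalently, in the language of the Marsden--Ratiu criterion the reduction condition $\pi_{\Sigma}^{\sharp}(V^{\circ})\subset T\Sigma_{\varDelta}$ holds trivially.

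Assembling the pieces, $p:(\Sigma_{\varDelta},\pi_{\Sigma})\to(M_{\varDelta},\pi_M)$ is a Poisson submersion whose fibres are coregular submanifolds, which is precisely the definition of a coregular Poisson submersion. The one point I would double-check carefully is that positivity of $\pi$ is taken with respect to the same complex structure that makes $B=\mathfrak{n}_{\C}$ a complex (i.e. $J$-invariant) subspace, since Lemma \ref{lem : holomorphic actions} requires exactly that $B$ be complex in order for the orbits to be coregular; this is guaranteed because $N_{\C}$ is a complex subtorus of $(\C^{\times})^d$.
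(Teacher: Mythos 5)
Your proposal is correct and follows essentially the same route as the paper: both identify the setup as an instance of Lemma \ref{lem : holomorphic actions} via the exponential action of $A=\C^d$ on $\C^d$, restrict the resulting $A$-invariant Poisson structure to the $N_{\C}$-saturated open set $\Sigma_{\varDelta}$, push it down by $N_{\C}$-invariance, and read off coregularity of the fibres from the coregularity of the $\mathfrak{n}_{\C}$-orbits. Your extra care about the descent step and about $\mathfrak{n}_{\C}$ being a complex ($J$-invariant) subspace only makes explicit what the paper leaves implicit.
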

\begin{proof}
 The complex vector space $A:=\C^d$ acts on $\C^d$ by the holomorphic transformation
 $(w_1,...,w_d) \cdot (z_1,...,z_d):=(e^{w_1}z_1,...,e^{w_d}z_d)$. By Lemma \ref{lem : holomorphic actions}, 
 there is an induced $A$-invariant Poisson structure $\Pi$ on $\C^d$, with the property 
 that $(\mathfrak{n}_{\C},0) \acts (\C^d,\Pi)$ is a Poisson action with coregular Poisson-Dirac submanifolds as orbits, 
 where $\mathfrak{n}_{\C}=\mathrm{Lie}(N_{\C})$. Because $\Sigma_{\varDelta} \subset \C^d$ is a union of $A$-orbits,
 it is a Poisson submanifold, with induced Poisson structure $\pi_{\Sigma}:=\Pi|_{\Sigma_{\varDelta}}$. 
 Because $\pi_{\Sigma}$ is $N_{\C}$-invariant, the quotient 
 map $p:\Sigma_{\varDelta} \to M_{\varDelta}$ pushes $\pi_{\Sigma}$ 
 to a Poisson structure $\pi_M$ on $M_{\varDelta}$ (cf. \cite{Ca}), whose fibres are coregular Poisson-Dirac submanifolds of $(\Sigma_{\varDelta},\pi_{\Sigma})$.
\end{proof}

When the positive bivector $\pi \in \wedge^2 \C^d$ is nondegenerate, the leaves of $(M_{\varDelta},\pi_M)$ are the orbits of the complex torus action, and so are finite in number. We refer to such manifolds as {\bf toric Poisson manifolds} (cf. \cite{Ca}). If complex conjugation is an anti-Poisson automorphism of $(\C^d,\pi)$ we say that $\pi$ is {\bf totally real}.

\subsection{Poisson submersions with Poisson fibres from varieties of full flags}

\vspace{0.5cm}

A closed subgroup $K$ of a Poisson-Lie group $(G,\pi_G)$  is a \emph{Poisson-Lie subgroup} if it is a Poisson submanifold of $(G,\pi_G)$; otherwise said, if $\pi_G$ is tangent to $K$, $\pi_G|_K=\pi_K \in \X^2(K)$, in which case $(K,\pi_K)$ becomes a Poisson-Lie group in its own right. 

In the following proposition, we look at different ways in which a closed subgroup interacts with the ambient Poisson-Lie group structure (cf. \cite[Section 4]{LW} and \cite[Proposition 2]{STS})

\begin{proposition}\label{pro : quotient by a subgroup}
 Let a Poisson-Lie group $(G,\pi_G)$ correspond to the $G$-invariant Manin triple 
 $(\mathfrak{d},\abk,\mathfrak{g},\mathfrak{h})$, and for a connected, closed subgroup $K\subset G$ with Lie algebra $\mathfrak{k} \subset \mathfrak{g}$, denote by $p:G \to M:=G/K$ the quotient map under the action
 \begin{align*}
  & K \acts G, & \alpha(k,g)=gk^{-1}.
 \end{align*}
 \begin{enumerate}[a)]
 \item $\pi_G$ is $K$-invariant $\Leftrightarrow$ $[\mathfrak{k},\mathfrak{h}]\subset \mathfrak{h}$, in which case $\pi_G$ vanishes along $K$.
 \item $K$ is a Poisson submanifold $\Leftrightarrow$ $\mathfrak{k}^{\circ} \subset \mathfrak{h}$ is an ideal $\Leftrightarrow$ $[\mathfrak{k},\mathfrak{k}^{\circ}]\subset \mathfrak{h}$.
  \item A Poisson structure $\pi_M$ on $M$ exists, for which the quotient map $p:(G,\pi_G) \to (M,\pi_M)$ is a 
Poisson submersion $\Leftrightarrow$ $\mathfrak{k}^{\circ} \subset \mathfrak{h}$ is a subalgebra. 
  \item If $K$ is a Poisson submanifold, then fibres of $p:G \to M$ are Poisson-Dirac submanifolds iff \[\mathrm{Ad}_G(\mathfrak{h}) \cap (\mathfrak{k} \oplus \mathfrak{k}^{\circ}) \subset \mathfrak{k}^{\circ}.\]
  In that case, the Poisson structures on fibres are all trivial exactly when \[\mathrm{Ad}_G(\mathfrak{h}) \cap (\mathfrak{k} \oplus \mathfrak{h}) \subset \mathfrak{h}.\]
 \end{enumerate}
\end{proposition}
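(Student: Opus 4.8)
The plan is to pull everything back to the Manin-triple trivialization $\epsilon\colon G\times\mathfrak{d}\diffto\mathbb{T}G$, under which $TG=\epsilon(G\times\mathfrak{g})$, $\mathrm{Gr}(\pi_G)=\epsilon(G\times\mathfrak{h})$, the Dorfman bracket and the tautological pairing correspond to the bracket and pairing of $\mathfrak{d}$, and $\varrho(\xi)=\xi^{L}$ for $\xi\in\mathfrak{g}$. Because the fibres of $p$ are the left cosets $gK$ — the orbits of the right action $\alpha(k,g)=gk^{-1}$ — the vertical bundle is $V=\epsilon(G\times\mathfrak{k})$, and $V^{\circ}$ is framed by the left-invariant forms $\langle\theta^{L},v\rangle$, $v\in\mathfrak{k}^{\circ}$. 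Two pointwise consequences of $\mathrm{Ad}_{g}(\iota_{\varrho(v)}\theta^{L}_{g})=\pr_{\mathfrak{g}}\mathrm{Ad}_{g}(v)$ will carry the computations: $\epsilon_{g}^{-1}(T^{*}_{g}G)=\mathrm{Ad}_{g^{-1}}\mathfrak{h}$, and $\pi_{G,g}^{\sharp}(\epsilon_{g}(b))=\epsilon_{g}(-\pr_{\mathfrak{g}}b)$ for $b\in\mathrm{Ad}_{g^{-1}}\mathfrak{h}$. I shall also use $\pi_{G,e}=0$ and the invariance of the pairing throughout.

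\emph{Parts a) and b).} Since right translations are generated by left-invariant fields, $K$-invariance of $\pi_{G}$ means $\mathscr{L}_{\xi^{L}}\pi_{G}=0$ for $\xi\in\mathfrak{k}$; writing this as $[\epsilon(\xi),\mathrm{Gr}(\pi_{G})]\subset\mathrm{Gr}(\pi_{G})$ and using that $\Gamma(\mathrm{Gr}\,\pi_{G})$ is generated over $C^{\infty}(G)$ by the $\epsilon(w)$, $w\in\mathfrak{h}$, the Leibniz rule reduces it to $[\epsilon(\xi),\epsilon(w)]=\epsilon([\xi,w])\in\epsilon(\mathfrak{h})$, i.e. $[\mathfrak{k},\mathfrak{h}]\subset\mathfrak{h}$; then $\pi_{G,k}=(r_{k})_{*}\pi_{G,e}=0$ gives $\pi_{G}|_{K}=0$. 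For b), $K$ is Poisson (i.e. $\mathfrak{h}$-invariant) iff $\varrho(\mathfrak{h})$ is tangent to $K$, which by the first pointwise formula reads $\mathrm{Ad}_{k}(\mathfrak{h})\subset\mathfrak{k}\oplus\mathfrak{h}$ for $k\in K$; taking orthogonals, with $(\mathfrak{k}\oplus\mathfrak{h})^{\perp}=\mathfrak{k}^{\circ}$ and $\mathfrak{h}^{\perp}=\mathfrak{h}$, this becomes $\mathrm{Ad}_{K}(\mathfrak{k}^{\circ})\subset\mathfrak{h}$. Its infinitesimal form $[\mathfrak{k},\mathfrak{k}^{\circ}]\subset\mathfrak{h}$ self-improves to $[\mathfrak{k},\mathfrak{k}^{\circ}]\subset\mathfrak{k}^{\circ}$ (because $\langle[\eta,\xi],w\rangle=0$ for $\eta,\xi\in\mathfrak{k}$, $w\in\mathfrak{k}^{\circ}$), so by connectedness of $K$ it integrates back to $\mathrm{Ad}_{K}(\mathfrak{k}^{\circ})\subset\mathfrak{k}^{\circ}\subset\mathfrak{h}$; and the equivalence with $[\mathfrak{h},\mathfrak{k}^{\circ}]\subset\mathfrak{k}^{\circ}$ is the identity $\langle\xi,[u,v]\rangle=\langle[\xi,u],v\rangle$ (for $\xi\in\mathfrak{k}$, $u\in\mathfrak{h}$, $v\in\mathfrak{k}^{\circ}$) combined with the Lagrangian property of $\mathfrak{g},\mathfrak{h}$.

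\emph{Part c).} A surjection with connected fibres is Poisson for some $\pi_{M}$ exactly when the bracket of basic functions is basic, and since $\xi^{L}$ is vertical this amounts to $(\mathscr{L}_{\xi^{L}}\pi_{G})|_{V^{\circ}\times V^{\circ}}=0$ for all $\xi\in\mathfrak{k}$. The decisive computation is that, using $\mathscr{L}_{\xi^{L}}\langle\theta^{L},v\rangle=\langle\theta^{L},[\xi,v]\rangle$, $\pi_{G}^{\sharp}\langle\theta^{L},v\rangle=\varrho(v)$ and invariance of the pairing, all the $g$-dependent terms cancel and one is left with the constant
\[
 (\mathscr{L}_{\xi^{L}}\pi_{G})(\langle\theta^{L},v\rangle,\langle\theta^{L},w\rangle)=\langle\xi,[v,w]\rangle,\qquad \xi\in\mathfrak{k},\ v,w\in\mathfrak{h}.
\]
Hence, as $v,w$ range over $\mathfrak{k}^{\circ}$, vanishing for all $\xi\in\mathfrak{k}$ says precisely that $[v,w]\in\mathfrak{k}^{\circ}$, i.e. that $\mathfrak{k}^{\circ}$ is a subalgebra (cf. \cite{LW,STS}).

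\emph{Part d).} Assuming $K$ Poisson, b) makes $\mathfrak{k}^{\circ}$ an ideal, hence a subalgebra, so $\pi_{M}$ exists by c), and inversion being anti-Poisson exhibits $\alpha$ as a Poisson action of $(K,-\pi_{K})$; by Lemma \ref{lem:orbits} each fibre $gK$ is Poisson-Dirac iff it is pointwise so at $g$, which by \eqref{eq : pointwise induced Poisson} means $\pi_{G,g}^{\sharp}(B^{\circ})\cap B=0$ for $B:=T_{g}(gK)$. Under $\epsilon_{g}$ one has $B\leftrightarrow\mathfrak{k}$ and $B^{\circ}\leftrightarrow\mathrm{Ad}_{g^{-1}}\mathfrak{h}\cap(\mathfrak{g}\oplus\mathfrak{k}^{\circ})$, and the formula $\pi_{G,g}^{\sharp}(\epsilon_{g}(b))=\epsilon_{g}(-\pr_{\mathfrak{g}}b)$ turns the pointwise condition into $\mathrm{Ad}_{g^{-1}}\mathfrak{h}\cap(\mathfrak{k}\oplus\mathfrak{k}^{\circ})\subset\mathfrak{k}^{\circ}$; letting $g$ vary gives $\mathrm{Ad}_{G}(\mathfrak{h})\cap(\mathfrak{k}\oplus\mathfrak{k}^{\circ})\subset\mathfrak{k}^{\circ}$. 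The same formula identifies the image of the induced $\pi_{B}^{\sharp}$ with $\{\pr_{\mathfrak{g}}b : b\in\mathrm{Ad}_{g^{-1}}\mathfrak{h},\ \pr_{\mathfrak{g}}b\in\mathfrak{k}\}$, whose vanishing is $\mathrm{Ad}_{g^{-1}}\mathfrak{h}\cap(\mathfrak{k}\oplus\mathfrak{h})\subset\mathfrak{h}$, i.e. $\mathrm{Ad}_{G}(\mathfrak{h})\cap(\mathfrak{k}\oplus\mathfrak{h})\subset\mathfrak{h}$. The one genuinely delicate point is the cancellation in c): the criterion is a priori indexed by the vertical directions $\mathfrak{k}$, and only the Manin-triple identities — rather than any soft projectability argument — reveal that it collapses to the subalgebra condition on $\mathfrak{k}^{\circ}$.
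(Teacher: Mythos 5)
Your proof is correct, and parts a), b) and d) follow essentially the same route as the paper: the Manin-triple trivialization $\epsilon$, the identification of $V$ and $V^{\circ}$ via $\varrho$ and $\theta^{L}$, the orthogonality bookkeeping $\mathfrak{k}^{\perp}=\mathfrak{g}\oplus\mathfrak{k}^{\circ}$, $\mathfrak{h}^{\perp}=\mathfrak{h}$, and the reduction of d) to a pointwise statement via Lemma \ref{lem:orbits} followed by exactly the same linear algebra (the paper phrases the pointwise condition as $\mathrm{Ad}_g(v+w)\in\mathfrak{h}\Rightarrow v=0$, you phrase it via $\pi_{G,g}^{\sharp}(\epsilon_g(b))=\epsilon_g(-\pr_{\mathfrak{g}}b)$; these are the same computation). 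The genuine divergence is in part c). The paper stays inside the Dirac formalism: it observes that $\epsilon(G\times(\mathfrak{k}\oplus\mathfrak{k}^{\circ}))=\mathcal{R}_{\pi_G}\mathrm{Gr}(V)$, so that involutivity of this gauge transform is equivalent to $\mathfrak{k}\oplus\mathfrak{k}^{\circ}$ (equivalently $\mathfrak{k}^{\circ}$) being a subalgebra, and then invokes the basic-Dirac-structure pushforward result of \cite[Proposition 1]{PT1.75} to produce $\pi_M=p_!\pi_G$. You instead use classical Poisson reduction (brackets of basic functions are basic, legitimate here since $K$ is connected so the fibres $gK$ are connected) and compute $(\mathscr{L}_{\xi^L}\pi_G)(\langle\theta^L,v\rangle,\langle\theta^L,w\rangle)=\langle\xi,[v,w]\rangle$ directly. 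I checked this identity: using $(\mathscr{L}_{\xi^L}\pi_G)^{\sharp}\alpha=[\xi^L,\pi_G^{\sharp}\alpha]-\pi_G^{\sharp}\mathscr{L}_{\xi^L}\alpha$, the bracket homomorphism property of $\epsilon$ gives $[\xi^L,\varrho(v)]=\varrho([\xi,v])$ while $\pi_G^{\sharp}\langle\theta^L,[\xi,v]\rangle=\varrho(\pr_{\mathfrak{h}}[\xi,v])$ only (since $[\xi,v]\notin\mathfrak{h}$ in general), so the difference is $\varrho(\pr_{\mathfrak{g}}[\xi,v])$ and contracting with $\langle\theta^L,w\rangle$ yields the constant $\langle[\xi,v],w\rangle=\langle\xi,[v,w]\rangle$ as you claim --- your sketch elides the $\pr_{\mathfrak{h}}$ versus identity distinction, which is precisely where the surviving term comes from, so it is worth writing out. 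Your route is more elementary and self-contained (no appeal to \cite{PT1.75}); the paper's buys uniformity with the Dirac-theoretic language used throughout and identifies $\pi_M$ structurally as $p_!\mathrm{Gr}(\pi_G)$.
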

\begin{proof}
 First note that the infinitesimal action $\mathfrak{k} \to \X(G)$ is given by the map $\varrho$ of \eqref{eq : dressing}, since
\begin{align*}
 \tfrac{d}{dt}g\exp(-tv)^{-1} = \tfrac{d}{dt}l_g(\exp(tv)) = (l_g)_*(v) = v^L_g = \varrho(v)_g.
\end{align*}If we let $V \subset TG$ stand for the vertical bundle of $p:G \to M$, then
\begin{align*}
& V = \{ \varrho(v)_g \ | \ (g,v) \in G \times \mathfrak{k}\}, & V^{\circ} = \{ \langle \theta^L_g,w\rangle \ | \ w \in \mathfrak{k}^{\circ} \}
\end{align*}
Indeed, $V$ is spanned by left-translates of $\mathfrak{k}$, and
\begin{align*}
 V^{\circ}_g & = \{ \langle \theta^L_g,w\rangle \ | \ \iota_{\varrho(v)_g}\langle \theta^L_g,w\rangle = \langle \iota_{\varrho(v)_g}\theta^L_g,w\rangle = \langle \mathrm{Ad}_{g^{-1}}\pr_{\mathfrak{g}}\mathrm{Ad}_{g}(v),w\rangle = 0, \ v \in \mathfrak{k} \} \\
 & = \{ \langle \theta^L_g,w\rangle \ | \ \langle v,w\rangle = 0, \ v \in \mathfrak{k} \}= \{ \langle \theta^L_g,w\rangle \ | \ w \in \mathfrak{k}^{\perp} \}\\
 & = \{ \langle \theta^L_g,w\rangle \ | \ w \in \mathfrak{k}^{\circ} \},
\end{align*}where we used the fact that $\mathfrak{k}^{\perp}=\mathfrak{g}\oplus \mathfrak{k}^{\circ}$, and that $\langle \theta^L,\mathfrak{g}\rangle=0$. Because the map $\epsilon : \mathfrak{d} \to \Gamma(\mathbb{T}G)$ of \eqref{eq : epsilon} is an algebra map, we have
\begin{align*}
 \epsilon[v,w] & = [\epsilon(v),\epsilon(w)] = [\varrho(v),\varrho(w)+\langle\theta^L,w\rangle] = [\varrho(v),\pi_G^{\sharp}\langle\theta^L,w\rangle+\langle\theta^L,w\rangle] 
\end{align*}for all $v \in \mathfrak{k}$ and $w \in \mathfrak{h}$, and therefore
\begin{align*}
 [\mathfrak{k},\mathfrak{h}] \subset \mathfrak{h} \qquad \Iff \qquad [\varrho(\mathfrak{k}),\Gamma(\mathrm{Gr}\ \pi_G)] \subset \Gamma(\mathrm{Gr}\ \pi_G),
\end{align*}which is the same as saying that the infinitesimal action $\varrho : \mathfrak{k} \to \X(G)$ is by Poisson automorphisms of $(G,\pi_G)$. Because $K$ is assumed to be connected, this is in turn equivalent to demanding that $\pi_G \in \X^2(G)$ be $K$-invariant. By multiplicativity of $\pi_G$, 
\begin{align*}
 & \pi_{G,gk^{-1}} = (l_g)_*(\pi_{G,k^{-1}})+(r_{k^{-1}})_*(\pi_{G,k}) = (r_{k^{-1}})_*(\pi_{G,k}), & (k,g) \in K \times G
\end{align*}this implies that $\pi_G$ vanishes along $K$, and hence that $(K,0) \acts (G,\pi_G)$ is a Poisson action, whose orbits are the fibres of $p:G \to M$. This proves a). Observe next that $K$ is a Poisson submanifold iff
\begin{align*}
 \pi_G^{\sharp}(N^*K) = \{ \varrho(w)_k \ | \ (k,w) \in K \times \mathfrak{k}^{\circ}\}
\end{align*}is trivial, which is tantamount to saying that $\mathrm{Ad}_K(\mathfrak{k}^{\circ}) \subset \mathfrak{h}$, or, equivalently, that $[\mathfrak{k},\mathfrak{k}^{\circ}] \subset \mathfrak{h}$ --- which by invariance of $\abk$ is yet equivalent to $[\mathfrak{k}^{\circ},\mathfrak{h}] \subset \mathfrak{k}^{\circ}$. This proves b). Moreover, because
\begin{align*}
 \epsilon(G \times \mathfrak{k}^{\circ}) = \{ \rho(w)_g+\la \theta^L_g,w\ra \ | \ (g,w) \in G \times \mathfrak{k}^{\circ}\} = \mathcal{R}_{\pi_G}(V^{\circ}),
\end{align*}it follows that
\begin{align*}
 \epsilon(G \times \mathrm{Gr}(\mathfrak{k})) = \mathcal{R}_{\pi_G}\mathrm{Gr}(V).
\end{align*}Again by invariance of $\abk$, we have that
\[
 \mathrm{Gr}(\mathfrak{k}) \ \ \text{is a subalgebra} \ \ \Leftrightarrow \ \ \mathfrak{k}^{\circ} \ \ \text{is a subalgebra} \ \ \Leftrightarrow \ \ [\mathfrak{k},\mathfrak{k}^{\circ}] \subset \mathrm{Gr}(\mathfrak{k}),
\]and therefore $\mathfrak{k}^{\circ}$ is a subalgebra exactly when $\mathcal{R}_{\pi_G}\mathrm{Gr}(V)$ is a Dirac structure on $G$. Because $\mathcal{R}_{\pi_G}\mathrm{Gr}(V)$ contains the vertical bundle, it is a basic Dirac structure according to \cite[Proposition 1]{PT1.75} --- that is, it is the pullback of a Dirac structure $L_M$ on $M$,
\begin{align*}
 \mathcal{R}_{\pi_G}\mathrm{Gr}(V) = p^!(L_M),
\end{align*}in which case
\begin{align*}
 L_M = p_!\mathcal{R}_{\pi_G}\mathrm{Gr}(V) = p_!\mathrm{Gr}(\pi_G).
\end{align*}Because $L_M$ arises from pushing forward a Dirac structure, it must itself be the graph of a Poisson structure $\pi_M$ on $M$, for which $p:(G,\pi_G) \rmap (M,\pi_M)$ is a Poisson map. This proves c).

Observe next that, if $(K,\pi_K)$ is a Poisson-Lie subgroup of $(G,\pi_G)$, then
\[
 \alpha : (K,-\pi_K) \times (G,\pi_G) \rmap (G,\pi_G)
\]is a Poisson action, whose orbits are the fibres of $p:G \to M$. Invoking Lemma \ref{lem:orbits}, we deduce that the fibres of $p$ have an induced Poisson structure exactly when the intersection
 \begin{align*}
  \pi_G^{\sharp}(V^{\circ}) \cap V & = \{ \varrho(v)_g \ | \ \exists \ w \in \mathfrak{k}^{\circ}, \ \varrho(v)_g=\varrho(w)_g \}\\
  & = \{ \varrho(v)_g \ | \ v \in \pr_{\mathfrak{g}}\ker\left({\varrho_g}|_{\mathfrak{k} \oplus \mathfrak{k}^{\circ}}\right)\}
 \end{align*}is trivial. This is equivalent to the condition that
 \begin{align*}
  (g, v,w) \in G \times \mathfrak{k} \times \mathfrak{k}^{\circ}, \qquad \mathrm{Ad}_g(v+w) \in \mathfrak{h} \quad \Longrightarrow \quad v=0,
 \end{align*}that is, that $\mathrm{Ad}_G(\mathfrak{h}) \cap (\mathfrak{k} \oplus \mathfrak{k}^{\circ}) \subset \mathfrak{k}^{\circ}$. Arguing in exactly the same manner we conclude that
 \begin{align*}
  \pi_G^{\sharp}(T^*G) \cap V & = \{ \varrho(v)_g \ | \ \exists \ w \in \mathfrak{h}, \ \varrho(v)_g=\varrho(w)_g \}
 \end{align*}is trivial --- that is, that the induced structures on fibres are trivial --- exactly when  $\mathrm{Ad}_G(\mathfrak{h}) \cap (\mathfrak{k} \oplus \mathfrak{h}) \subset \mathfrak{h}$.
\end{proof}

On a compact connected semisimple Lie group a choice of maximal torus and root order gives rise to a ``standard'' Poisson-Lie group structure which descends to the {\bf manifolds of full flags} \cite{LW}:

\begin{proposition}\label{pro : Lu-Weinstein}
Let a connected compact Lie group $G$ be equipped with its ``standard'' Poisson structure $\pi_G$ and let $(M,\pi_M)$ be the manifold of full flags. Then $p:(G,\pi_G) \to (M,\pi_M)$
 is a Poisson submersion with Poisson fibres, and the Poisson structures on fibres are all trivial.
\end{proposition}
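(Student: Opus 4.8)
The plan is to realise $p:(G,\pi_G)\to(M,\pi_M)$ as the quotient by a maximal torus and to feed this into Proposition \ref{pro : quotient by a subgroup}. First I would fix the Manin triple describing the standard structure: complexify $\mathfrak{g}$, take the root-space decomposition $\mathfrak{g}_{\C}=\mathfrak{t}_{\C}\oplus\bigoplus_{\alpha}\mathfrak{g}_{\alpha}$ determined by $T$ and the chosen root order, and set $\mathfrak{d}=\mathfrak{g}_{\C}$ (as a real Lie algebra), $\abk=\operatorname{Im}\kappa$ (imaginary part of the Killing form), and $\mathfrak{h}=\mathfrak{a}\oplus\mathfrak{n}$ with $\mathfrak{a}=i\mathfrak{t}$ and $\mathfrak{n}=\bigoplus_{\alpha>0}\mathfrak{g}_{\alpha}$; here $\mathfrak{g}$ and $\mathfrak{h}$ are the two Lagrangian subalgebras. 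Taking $K=T$, so $\mathfrak{k}=\mathfrak{t}$, a direct pairing computation gives $\mathfrak{k}^{\circ}=\mathfrak{n}$, since $\langle v,iq+n\rangle=\kappa(v,q)$ for $v,q\in\mathfrak{t}$, $n\in\mathfrak{n}$.

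With these identifications the hypotheses a), b), c) of Proposition \ref{pro : quotient by a subgroup} are immediate from the root-space formalism: $[\mathfrak{t},\mathfrak{a}]=0$ and $[\mathfrak{t},\mathfrak{n}]\subset\mathfrak{n}$ give $[\mathfrak{k},\mathfrak{h}]\subset\mathfrak{h}$ (so $\pi_G$ is $T$-invariant and vanishes along $T$); $\mathfrak{n}$ is an ideal of $\mathfrak{h}$ (so $T$ is a Poisson submanifold, with $\pi_T=0$); and $\mathfrak{n}$ is a subalgebra (so $\pi_M$ exists and $p$ is a Poisson submersion). It then remains to verify part d): that fibres are Poisson-Dirac, which amounts to $\operatorname{Ad}_G(\mathfrak{h})\cap(\mathfrak{t}\oplus\mathfrak{n})\subset\mathfrak{n}$, and that the induced structures are trivial, which amounts to $\operatorname{Ad}_G(\mathfrak{h})\cap(\mathfrak{t}\oplus\mathfrak{h})\subset\mathfrak{h}$. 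Since $\mathfrak{t}\cap\mathfrak{a}=0$, the first inclusion follows from the second, so the whole of d) reduces to the single claim $\operatorname{Ad}_G(\mathfrak{h})\cap\mathfrak{b}\subset\mathfrak{h}$, where $\mathfrak{b}=\mathfrak{t}\oplus\mathfrak{h}=\mathfrak{t}_{\C}\oplus\mathfrak{n}$ is the Borel.

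This last inclusion is the heart of the matter, and I expect it to be the main obstacle; it is also where compactness enters. My approach is spectral. For $w\in\mathfrak{h}$ and $g\in G$ the operators $\operatorname{ad}_{\operatorname{Ad}_g w}$ and $\operatorname{ad}_w$ are conjugate, hence have the same eigenvalues. For any element of $\mathfrak{b}$ with $\mathfrak{t}_{\C}$-component $Y_0$, the eigenvalues of its adjoint are $0$ and the numbers $\alpha(Y_0)$ ($\alpha\in\Phi$), since on the associated graded for the height filtration of $\mathfrak{g}_{\C}$ the nilpotent contribution of $\mathfrak{n}$ drops out and $\operatorname{ad}_{Y_0}$ acts diagonally. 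Applying this to $w$, whose $\mathfrak{t}_{\C}$-component lies in $\mathfrak{a}=i\mathfrak{t}$, the eigenvalues are the $\alpha(w_{\mathfrak{a}})$, which are real because roots take real values on $\mathfrak{a}$. Now if $X:=\operatorname{Ad}_g w$ also lies in $\mathfrak{b}$, write its $\mathfrak{t}_{\C}$-component as $X_0=p+iq$ with $p,q\in\mathfrak{t}$; the eigenvalues of $\operatorname{ad}_X$ are the $\alpha(X_0)$, and these must coincide with the real numbers above, so each $\alpha(X_0)$ is real. As roots are purely imaginary on $\mathfrak{t}$, reality of $\alpha(X_0)=\alpha(p)+i\alpha(q)$ forces $\alpha(p)=0$ for every root $\alpha$, and since $G$ is semisimple the roots span $\mathfrak{t}^{*}$, whence $p=0$ and $X_0\in\mathfrak{a}$. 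Therefore $X\in\mathfrak{a}\oplus\mathfrak{n}=\mathfrak{h}$, proving the inclusion.

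Granting this, Proposition \ref{pro : quotient by a subgroup}(d) shows the fibres of $p$ are Poisson-Dirac with trivial induced structure; in particular each vertical tangent space inherits a (trivial) Poisson structure, so Theorem \ref{thm : coregular submersion pulls back coregular} upgrades $p$ to a coregular Poisson submersion, yielding both assertions. I stress that the hard step is exactly $\operatorname{Ad}_G(\mathfrak{h})\cap\mathfrak{b}\subset\mathfrak{h}$: the isotropy of $\mathfrak{h}$ is of no use here, because $\mathfrak{t}_{\C}\oplus\mathfrak{n}$ is itself isotropic, so one must extract genuinely new information. The $\operatorname{Ad}$-invariance of the $\operatorname{ad}$-spectrum, combined with the opposite reality behaviour of roots on $\mathfrak{a}=i\mathfrak{t}$ versus $\mathfrak{t}$, is the device that supplies it, and it is precisely the compactness of $G$ that makes $\mathfrak{a}$ the imaginary Cartan on which roots become real.
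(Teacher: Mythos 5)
Your proposal is correct and follows essentially the same route as the paper: the same Manin triple $(\mathfrak{g}_{\C},\operatorname{Im}\kappa,\mathfrak{g},\mathfrak{a}\oplus\mathfrak{n}_+)$, the same reduction via Proposition \ref{pro : quotient by a subgroup}, and the same key device of combining the $\mathrm{Ad}$-invariance of the $\mathrm{ad}$-spectrum with the fact that roots are real on $\mathfrak{a}$ and purely imaginary on $\mathfrak{t}$ (the paper packages this as the characterizations S1)--S3), you compute the spectrum via the height filtration). Your observation that the Poisson--Dirac inclusion follows from the triviality inclusion because $\mathfrak{t}\cap\mathfrak{a}=0$ is a minor streamlining of the paper's ``by the same argument''.
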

\begin{proof}
Let us fix a maximal torus $T \subset G$, with Lie algebra $\mathfrak{t}$. Then $\mathfrak{t}_{\C}$ is a Cartan subalgebra for the complex semisimple Lie algebra $\mathfrak{d}:=\mathfrak{g}_{\C}$; hence there is a splitting of vector spaces
\begin{align*}
 \mathfrak{d} = \mathfrak{t}_{\C} \oplus \bigoplus_{\alpha \in \Delta}\mathfrak{d}_{\alpha},
\end{align*}where $\Delta$ is the set of all roots $\alpha \in \mathfrak{t}_{\C}^*\diagdown \{0\}$.

The Killing form $\mathrm{B}_{\mathfrak{d}} \in S^2\mathfrak{d}^*$ of $\mathfrak{d}$ is an invariant, nondegenerate, symmetric bilinear pairing. It is nondegenerate on $\mathfrak{t}_{\C} \times \mathfrak{t}_{\C}$ and on $\mathfrak{d}_{\alpha} \times \mathfrak{d}_{-\alpha}$  --- on $\mathfrak{d}_{\alpha} \times \mathfrak{d}_{\beta}$ it vanishes if $\alpha + \beta \neq 0$. Moreover, there is a splitting of real vector spaces
 \begin{align*}
 & \mathfrak{t}_{\C} = \mathfrak{t} \oplus \mathfrak{a}, & \mathfrak{a}:=\{ v  \in \mathfrak{t}_{\C} \ | \ \alpha \in \Delta \ \Rightarrow \ \alpha(v) \in \R\},
 \end{align*}and $\mathrm{B}_{\mathfrak{d}}|_{\mathfrak{t} \times \mathfrak{t}}$ is negative-definite and $\mathrm{B}_{\mathfrak{d}}|_{\mathfrak{a} \times \mathfrak{a}}$ is positive-definite. Let us now fix a set of positive roots $\Delta^+ \subset \Delta$, and write $\Delta = \Delta^+ \coprod \Delta^-$, where $\Delta^-:=-\Delta^+$. Define
 \begin{align*}
  & \mathfrak{n}_{\pm}:=\bigoplus_{\alpha \in \Delta^{\pm}}\mathfrak{d}_{\alpha}, & \mathfrak{b}_{\pm}:= \mathfrak{t}_{\C}\oplus \mathfrak{b}_{\pm}.
 \end{align*}Then $\mathfrak{n}_{\pm}$ are nilpotent Lie algebras, and $\mathfrak{b}_{\pm}$ are solvable Lie algebras; in fact, $[\mathfrak{b}_{\pm},\mathfrak{b}_{\pm}]=\mathfrak{n}_{\pm}$. Moreover, the splittings $\mathfrak{b}_{\pm}=\mathfrak{t}\oplus \mathfrak{a} \oplus \mathfrak{n}_{\pm}$ can be described in terms of the spectrum of the adjoint map $\mathrm{ad}(v):\mathfrak{d} \to \mathfrak{d}$, in the sense that, for $v \in \mathfrak{b}_{\pm}$:
 \begin{enumerate}[S1)]
  \item $v \in \mathfrak{t}\oplus \mathfrak{n}_{\pm} \ \Leftrightarrow \ \mathrm{Spec}(\mathrm{ad}(v)) \subset i\R$;
  \item $v \in \mathfrak{a}\oplus \mathfrak{n}_{\pm} \ \Leftrightarrow \ \mathrm{Spec}(\mathrm{ad}(v)) \subset \R$;
  \item $v \in \mathfrak{n}_{\pm} \ \Leftrightarrow \ \mathrm{Spec}(\mathrm{ad}(v)) = \{0\}$.
 \end{enumerate}
Define further 
 \begin{align*}
  & \mathfrak{h}:= \mathfrak{a} \oplus \mathfrak{n}_{+}, & \langle v,w\rangle:=\Im \mathrm{B}_{\mathfrak{d}}(v,w).
 \end{align*}Then $(\mathfrak{d},\abk,\mathfrak{g},\mathfrak{h})$ is a Manin triple, and because $G$ is connected, there are no choices in the representation $\mathrm{Ad}:G \acts \mathfrak{d}$, so we may regard $(\mathfrak{d},\abk,\mathfrak{g},\mathfrak{h})$ as $G$-invariant Manin triple. The Poisson structure $\pi_G \in \X^2(G)$ that corresponds to it is the ``standard'' or \emph{Lu-Weinstein} Poisson structure on $G$. By item a) in Proposition \ref{pro : quotient by a subgroup}, the fact that
 \begin{align*}
  [\mathfrak{t},\mathfrak{h}] = \mathfrak{n}_+ \subset \mathfrak{h}
 \end{align*}implies that $p:G \to G/T$ pushes $\pi_G$ to a Poisson structure $\pi_M$ on $M=G/T$. On the other hand, suppose $(g,x,y,z,w) \in G \times \mathfrak{a} \times \mathfrak{n}_{\pm} \times \mathfrak{t} \times \mathfrak{n}_{\pm}$ is such that
 \begin{align*}
  \mathrm{Ad}_g(x+y) = z+w.
 \end{align*}By \textrm{S1)}, $\mathrm{Spec}(\mathrm{ad}(z+w)) \subset i\R$ and by \textrm{S2)},
 \begin{align*}
 \mathrm{Spec}(\mathrm{ad}\mathrm{Ad}_g(x+y)) = \mathrm{Spec}(\mathrm{ad}(x+y)) \subset \R.
 \end{align*}Thus \textrm{S3)} implies that $x=0$ and $z=0$. That is,
 \begin{align*}
  \mathrm{Ad}_G(\mathfrak{h}) \cap (\mathfrak{t} \oplus \mathfrak{n}_+) \subset \mathfrak{n}_+,
 \end{align*}which implies by item d) in Proposition \ref{pro : quotient by a subgroup} that 
 the fibres of $p:G \to M$ are Poisson-Dirac submanifolds. In fact, by the same argument one deduces that
 \begin{align*}
  \mathrm{Ad}_G(\mathfrak{h}) \cap (\mathfrak{t} \oplus \mathfrak{h}) \subset \mathfrak{h},
 \end{align*}which is to say that all Poisson structures on fibres are trivial.
\end{proof}

\subsection{Poisson structures on associated bundles}

Recall that if $p:P \to M$ is a right principal $G$-bundle, and $G$ acts on the left on a manifold $X$, we refer to the quotient of the free left action
\begin{align*}
 & G \curvearrowright P \times X, & g \cdot (y,x):=(yg^{-1},gx)
\end{align*}as the {\bf associated bundle}
\begin{align*}
 & p:\Sigma:=P\times_GX \to M, & p[y,x]=p(y).
\end{align*}

The first observation is that, when $P$ is equipped with a $G$-invariant Poisson structure, the associated bundle $\Sigma$ has an induced Poisson structure --- and certain good properties of the principal bundle are inherited by its associated bundle:

\begin{lemma}\label{lem : associated inherits pencils}
 Let a right principal $G$-bundle $p:P\to M$ be endowed with a $G$-invariant Poisson structure $\pi_P$, 
 and let $G$ act on the left of a Poisson manifold $(X,\pi_X)$ by Poisson diffeomorphisms. Then there exist Poisson structures $\pi_M$ on $M$ and $\pi_{\Sigma}$ on $\Sigma:=P \times_GX$, for which
 \[
  \xymatrix{
  (P,\pi_P) \times (X,\pi_X) \ar[r]^{\phantom{12345}q} \ar[d]_{\pr_1} & (\Sigma,\pi_{\Sigma}) \ar[d]^p\\
  (P,\pi_P) \ar[r]_p & (M,\pi_M)
  }
 \]are all Poisson submersions. If $p:(P,\pi_P) \to (M,\pi_M)$ is a Poisson submersion with Poisson fibres, or an orthogonal pencil, then so is $p:(\Sigma,\pi_{\Sigma}) \to (M,\pi_M)$.
\end{lemma}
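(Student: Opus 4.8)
The plan is to build both Poisson structures by \emph{descent} and then to reduce every assertion about $p:(\Sigma,\pi_{\Sigma})\to(M,\pi_M)$ to the corresponding assertion about the product submersion $\bar p:=p\circ\pr_1:(P\times X,\pi_P\times\pi_X)\to(M,\pi_M)$, transported across the principal bundle $q:P\times X\to\Sigma$. The starting point is a general descent lemma I would record first: since $q$ is a principal $G$-bundle its fibres are exactly the $G$-orbits, so any $G$-invariant bivector $\Pi$ on $P\times X$ is $q$-projectable --- indeed $\wedge^2(q_*)\Pi$ is constant along orbits because $q\circ g=q$ and $\Pi$ is invariant --- and descends to a smooth bivector $\bar\Pi$ on $\Sigma$ satisfying $q_*\circ\Pi^{\sharp}\circ q^{*}=\bar\Pi^{\sharp}$; as $q$-relatedness is natural under the Schouten bracket, $\bar\Pi$ is Poisson whenever $\Pi$ is. Applying this to $P\to M$ yields $\pi_M$ with $p$ Poisson; applying it to $\pi_P\times\pi_X$ --- which is invariant under $g\cdot(y,x)=(yg^{-1},gx)$ since $\pi_P$ is $G$-invariant and $G\acts(X,\pi_X)$ by Poisson diffeomorphisms --- yields $\pi_{\Sigma}$ with $q$ Poisson.

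With $\pr_1$ and $p$ Poisson, $\bar p$ is Poisson; since $q$ is a surjective Poisson submersion and $p\circ\pr_1=p\circ q$, testing on basic functions $f\circ p$ shows $p:(\Sigma,\pi_{\Sigma})\to(M,\pi_M)$ is Poisson too, which settles the first assertion of the statement. For the coregularity claim, by Theorem \ref{thm : coregular submersion pulls back coregular} it suffices to verify the pointwise condition $\pi_{\Sigma}^{\sharp}(V_{\Sigma}^{\circ})\cap V_{\Sigma}=0$, where $V_{\Sigma}=\ker p_*$. Upstairs the computation is immediate: writing $V_{\bar p}=V_P\oplus TX$ one has $V_{\bar p}^{\circ}=V_P^{\circ}\oplus 0$, and the block form of the product gives $(\pi_P\times\pi_X)^{\sharp}(V_{\bar p}^{\circ})=\pi_P^{\sharp}(V_P^{\circ})\oplus 0$, which meets $V_{\bar p}$ in $(\pi_P^{\sharp}(V_P^{\circ})\cap V_P)\oplus 0=0$ by coregularity of $p$ on $P$. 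The identities $q^{*}(V_{\Sigma}^{\circ})=V_{\bar p}^{\circ}$ and $q_*^{-1}(V_{\Sigma})=V_{\bar p}$, both consequences of $p\circ q=\bar p$, then transport this downstairs: if $\pi_{\Sigma}^{\sharp}(\xi)=q_*(\Pi^{\sharp}(q^{*}\xi))\in V_{\Sigma}$ then $\Pi^{\sharp}(q^{*}\xi)$ lies in $q_*^{-1}(V_{\Sigma})\cap\Pi^{\sharp}(V_{\bar p}^{\circ})=V_{\bar p}\cap\Pi^{\sharp}(V_{\bar p}^{\circ})=0$, whence $\pi_{\Sigma}^{\sharp}(\xi)=0$.

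For the orthogonal pencil claim, suppose $\pi_P=\pi_V+\pi_H$ is a pencil on $P$. Since the $G$-action is by automorphisms of the submersion $p:P\to M$ fixing $\pi_P$, and the pencil splitting is unique (noted after Definition \ref{def : orthogonal pencil}), the summands $\pi_V,\pi_H$ are \emph{separately} $G$-invariant. Hence $\widetilde\pi_V:=\pi_V\times\pi_X$ and $\widetilde\pi_H:=\pi_H\times 0$ are $G$-invariant, and a direct check (block form of $\sharp$, plus the fact that Schouten brackets of lifts from complementary factors of a product vanish) exhibits $\bar p$ as an orthogonal pencil: $\widetilde\pi_V$ is vertical, $\widetilde\pi_H^{\sharp}(T^{*}(P\times X))\cap V_{\bar p}=(\pi_H^{\sharp}(T^{*}P)\cap V_P)\oplus 0=0$, and the two commute. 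By the descent lemma they pass to $q$-related bivectors $\pi_V^{\Sigma},\pi_H^{\Sigma}$ with $\pi_V^{\Sigma}+\pi_H^{\Sigma}=\pi_{\Sigma}$. Then $\pi_V^{\Sigma}$ is vertical because $q_*V_{\bar p}\subset V_{\Sigma}$; the two are commuting Poisson structures because $q$-relatedness carries the vanishing of Schouten brackets downstairs along the surjective submersion $q$; and $(\pi_H^{\Sigma})^{\sharp}(T^{*}\Sigma)\cap V_{\Sigma}=0$ by exactly the transport trick of the previous paragraph applied to $\widetilde\pi_H$. This realizes $p:(\Sigma,\pi_{\Sigma})\to(M,\pi_M)$ as an orthogonal pencil.

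The routine parts are the block-matrix identities and the Schouten-bracket bookkeeping; the step I expect to be the genuine obstacle is the descent lemma together with the two ``transport'' arguments, i.e.\ keeping precise track of how $\sharp$-maps, annihilators, and intersections with the vertical bundles $V_{\bar p}$, $V_{\Sigma}$ and the $q$-vertical (orbit) bundle interact under $q_*$ and $q^{*}$. The crucial inputs that make the intersections collapse are, on one hand, the identity $q_*^{-1}(V_{\Sigma})=V_{\bar p}$ (so a downstairs intersection pulls back \emph{into} an upstairs intersection), and on the other hand the freeness of the principal action, implicitly used to guarantee that $q$ is a principal bundle with fibres equal to the orbits so that invariant tensors descend. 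Establishing the $G$-invariance of the individual pencil summands via uniqueness of the splitting is the one slightly delicate conceptual point, as it is what allows the pencil --- and not merely the total Poisson structure --- to be pushed through $q$.
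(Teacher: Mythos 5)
Your proof is correct, and its overall strategy --- descending $G$-invariant structures along the principal bundle $q:P\times X\to\Sigma$ and transporting intersections between the product submersion $\bar p:=p\circ \pr_1$ and $p:\Sigma\to M$ via the identity $q_*^{-1}(V_{\Sigma})=V_{\bar p}$ --- is the same as the paper's. The construction of $\pi_M$ and $\pi_{\Sigma}$ and the coregularity step are essentially identical: the paper computes $\pi_{\Sigma}^{\sharp}(V_{\Sigma}^{\circ})=q_*(\pi_P^{\sharp}(V_P^{\circ})\times X)$ directly and checks it is a subbundle meeting $V_{\Sigma}$ trivially, whereas you verify only the pointwise condition $\pi_{\Sigma}^{\sharp}(V_{\Sigma}^{\circ})\cap V_{\Sigma}=0$ and invoke Theorem \ref{thm : coregular submersion pulls back coregular} to upgrade it; both are legitimate. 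The one genuine divergence is the orthogonal-pencil step. The paper takes the shortest route through Theorem \ref{thm : orthogonal splittings}: the pencil hypothesis on $P$ is equivalent to the fibrewise Poisson structures assembling into a smooth vertical bivector $\widetilde{\pi}_V$ on $P$, so $q_*(\widetilde{\pi}_V\times\pi_X)$ is a smooth vertical Poisson structure on $\Sigma$ inducing the same fibrewise structures as $\pi_{\Sigma}$, and one concludes by the equivalence of conditions i) and iii) of that theorem. You instead descend the two summands separately: you first establish that $\pi_V$ and $\pi_H$ are individually $G$-invariant by combining invariance of $\pi_P$, preservation of $V_P$ by the principal action, and the uniqueness of the splitting recorded after Definition \ref{def : orthogonal pencil}, and then verify the pencil axioms for the descended bivectors by hand. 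This is a valid alternative --- slightly longer, but more self-contained, since it exhibits the splitting of $\pi_{\Sigma}$ explicitly and uses only the uniqueness statement rather than the full equivalence of Theorem \ref{thm : orthogonal splittings}; the equivariance-via-uniqueness observation is the one place where you do genuine extra work, and it is sound.
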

\begin{proof}
  Because $\pi_P\in \X^2(P)$ is $G$-invariant there is 
  a unique Poisson structure  $\pi_M \in \X^2(M)$,
 such that $p:(P,\pi_P) \to (M,\pi_M)$ is a Poisson submersion.  Likewise, the Poisson structure $(\pi_P,\pi_X)$ on $P \times X$ is invariant under the diagonal action
 \begin{align*}
  & G \acts P \times X, & g(y,x)=(yg^{-1},gx),
 \end{align*}and a unique Poisson structure $\pi_{\Sigma}$ exists on $\Sigma$, for which $q:(P,\pi_P) \times (X,\pi_X) \to (\Sigma,\pi_{\Sigma})$ is a Poisson submersion. In fact, $(\pi_P,0)$ and $(0,\pi_X)$ are both $G$-invariant, and if we denote by $\pi_H,\pi_V \in \X^2(\Sigma)$ the respective Poisson bivectors which correspond to them, we have a splitting into commuting Poisson structures
 \begin{align*}
  \pi_{\Sigma}=\pi_H+\pi_V,
 \end{align*}in which $\pi_V$ is tangent to the fibres of $p:\Sigma \to M$. Suppose on the one hand that $p:(P,\pi_P) \to (M,\pi_M)$ is a Poisson submersion with Poisson fibres. Then
 \begin{align*}
  \pi_{\Sigma}^{\sharp}(V_{\Sigma}^{\circ}) & = q_*(\pi_P^{\sharp},\pi_X^{\sharp})q^*(V_{\Sigma}^{\circ})  = q_*(\pi_P^{\sharp},\pi_X^{\sharp})(V_P^{\circ} \times X) = q_*(\pi_P^{\sharp}(V_P^{\circ}) \times X)
 \end{align*}is a vector bundle, which meets $V_{\Sigma}$ trivially because $q_*(\pi_P^{\sharp}(\xi),0) \in V_{\Sigma}$ implies that $\pi_P^{\sharp}(\xi) \in V_P$. Hence $p:(\Sigma,\pi_{\Sigma}) \to (M,\pi_M)$ would be a Poisson submersion with Poisson fibres as well. Suppose on the other hand that $p:(P,\pi_P) \to (M,\pi_M)$ is an orthogonal pencil. By Theorem \ref{thm : orthogonal splittings}, this is equivalent to the vertical Poisson structures on fibres assembling into a smooth, vertical Poisson structure $\widetilde{\pi}_V \in \Gamma(\wedge^2V_{\Sigma})$ --- in which case $\pi_V:=q_*\widetilde{\pi}_V+\pi_X \in \Gamma(\wedge^2V_{\Sigma})$ is a smooth, vertical Poisson structure on $\Sigma$, which induces on fibres the same Poisson structure as $\pi_{\Sigma}$. Hence $p:(\Sigma,\pi_{\Sigma}) \to (M,\pi_M)$ is  an orthogonal pencil. 
\end{proof}

Even when the principal bundle itself fails to be sufficiently well-behaved, one may sometimes impose conditions on the Poisson structure $\pi_P$ and the action $G \curvearrowright (X,\pi_X)$ to require better behavior of its associated bundle; e.g.,

\begin{lemma}\label{lem : self-made pencil}
 Let a right principal $G$-bundle $p:P\to M$ be endowed with a $G$-invariant Poisson structure $\pi_P$, 
 and let $G$ act on the left of a Poisson manifold $(X,\pi_X)$ by Poisson diffeomorphisms. Let $W_G \subset V_P \times TX$ denote the vertical bundle to $q:P \times X \to \Sigma$, and $V_P$ the vertical bundle to $p:P \to M$. If $\pi_P^{\sharp}:W_G^{\circ} \to TP \times X$ meets $V_P \times X$ in a vector bundle, then $p:(\Sigma,\pi_{\Sigma}) \to (M,\pi_M)$ is an orthogonal pencil.
\end{lemma}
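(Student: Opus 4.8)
The plan is to verify one of the equivalent conditions of Theorem \ref{thm : orthogonal splittings}. By Lemma \ref{lem : associated inherits pencils} I already know that $p:(\Sigma,\pi_\Sigma)\to(M,\pi_M)$ is a Poisson submersion, that $q:(P,\pi_P)\times(X,\pi_X)\to(\Sigma,\pi_\Sigma)$ is a Poisson submersion --- indeed a principal $G$-bundle, since $q$ is the quotient by the free diagonal action --- and that $\pi_\Sigma=\pi_H+\pi_V$, where $\pi_V\in\Gamma(\wedge^2V_\Sigma)$ is the smooth vertical bivector pushed forward from $(0,\pi_X)$ and $\pi_H$ is pushed forward from $(\pi_P,0)$. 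Since $\pi_V$ is already a smooth vertical Poisson structure, the whole difficulty concentrates in the part of $\pi_H$ tangent to the fibres of $p$: concretely, I aim to show that the Lagrangian family $L_\Sigma$ of condition iv) of Theorem \ref{thm : orthogonal splittings} is a smooth vector subbundle of $\mathbb{T}\Sigma$ (equivalently, by iii), that the induced Poisson structures on fibres assemble into a smooth vertical bivector), whereupon the theorem delivers the orthogonal pencil, refining $\pi_\Sigma=\pi_H+\pi_V$ into its genuine vertical/horizontal splitting.

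The main device is to transport the question across the principal bundle $q$. Because $q$ is a surjective submersion, it suffices to check smoothness upstairs, i.e.\ to identify the pullback $q^!L_\Sigma\subset\mathbb{T}(P\times X)$ and show it is a smooth subbundle. The relevant identities for $q$ are the standard ones: fibrewise $q^*(T^*\Sigma)=W_G^\circ$; the conormal of the $p$-fibres pulls back as $q^*V_\Sigma^\circ=V_P^\circ\times\{0\}\subset W_G^\circ$; the vertical bundle satisfies $q_*(V_P\times TX)=V_\Sigma$ with kernel $W_G$; and, since $q$ is Poisson, $\pi_\Sigma^\sharp=q_*\circ(\pi_P,\pi_X)^\sharp\circ q^*$ on basic covectors. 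Using these, the tangent projection $\pr_T(L_\Sigma)=\pi_\Sigma^\sharp(T^*\Sigma)\cap V_\Sigma$ is exactly the $q_*$-image of $(\pi_P,\pi_X)^\sharp$ applied to
\[
 T:=\{(\xi,\zeta)\in W_G^\circ \ | \ \pi_P^\sharp\xi\in V_P\},
\]
whose $P$-component is precisely the object $\pi_P^\sharp(W_G^\circ)\cap(V_P\times X)$ appearing in the hypothesis.

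With this dictionary in place, the hypothesis guarantees that the $P$-component of $(\pi_P,\pi_X)^\sharp(T)$ is a vector bundle of locally constant rank, while its $X$-component, governed by the smooth bivector $\pi_X$, introduces no new discontinuity; $G$-invariance of all data then ensures that the resulting smooth, $G$-invariant subbundle of $V_P\times TX$ descends under $q_*$ to a smooth subbundle of $V_\Sigma$. Promoting this from the image distribution to the full family, I would identify $q^!L_\Sigma$ with the analogous vertical Lagrangian family for the composite submersion $P\times X\to M$, summed with $W_G$ and $W_G^\circ$, and read off from the hypothesis that it is a smooth Lagrangian subbundle; descending along $q$ then yields that $L_\Sigma$ is a vector bundle, so that Theorem \ref{thm : orthogonal splittings} applies and $p:(\Sigma,\pi_\Sigma)\to(M,\pi_M)$ is an orthogonal pencil.

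The step I expect to be the main obstacle is the smoothness bookkeeping at the level of the Dirac family rather than of its image. As Example \ref{ex : coregular submersion with discontinuous Poisson structure on fibres} shows, an induced distribution can be perfectly smooth while the underlying bivector on fibres jumps, so controlling $\pr_T(L_\Sigma)$ alone does not suffice; one must argue with $q^!L_\Sigma$ (equivalently, with the assembled vertical bivector of condition iii)) in full. The delicate point is that the two bundle maps involved --- $(\pi_P,\pi_X)^\sharp$ restricted to $T$, and the quotient $q_*$, whose kernel $W_G\subset V_P\times TX$ lies among the vertical directions --- can a priori both drop rank. The hypothesis, phrased through $W_G^\circ$ rather than through $V_P^\circ$, is tailored precisely so that the only potentially singular piece, namely the part of $\pi_H$ that is vertical for $p$, is a genuine vector bundle, which is what makes the descent to a smooth orthogonal pencil go through.
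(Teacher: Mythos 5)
Your proposal is correct and follows essentially the same route as the paper's proof: both reduce to criterion iv) of Theorem \ref{thm : orthogonal splittings}, transport the Lagrangian family $\mathrm{Gr}(\pi_{\Sigma}) \cap (V_{\Sigma}\oplus T^*\Sigma) + V_{\Sigma}^{\circ}$ through the Poisson submersion $q$ via $q^!\mathrm{Gr}(\pi_{\Sigma})=\mathcal{R}_{(\pi_P,\pi_X)}\mathrm{Gr}(W_G)$, and read off smoothness upstairs from the hypothesis on $\pi_P^{\sharp}(W_G^{\circ})\cap (V_P\times X)$. Your intermediate discussion of the tangent projection $\pr_T(L_\Sigma)$ and the warning that the image distribution alone does not suffice are sound but ultimately a detour, since the final step you describe (working with the full pulled-back family) is exactly the paper's one-line argument.
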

\begin{proof}
  Observe that by the characterization of Theorem \ref{thm : orthogonal splittings}, $p:(\Sigma,\pi_H) \to (M,\pi_M)$ is an orthogonal pencil iff $\mathrm{Gr}(\pi_H) \cap (V_{\Sigma}\oplus T^*\Sigma) + V_{\Sigma}^{\circ}$ is a vector bundle.  Because $q$ is a Poisson submersion, this is the case exactly when
 \begin{align*}
  q^!\mathrm{Gr}(\pi_H) \cap \left((V_P \times TX) \oplus (T^*P \times T^*X)\right) + V_P^{\circ} \times X 
 \end{align*}is a vector bundle --- and since $q^!\mathrm{Gr}(\pi_H)=\mathcal{R}_{\pi_H}\mathrm{Gr}(W_Q)$, that is implied by the hypothesis that
 \[
  (\pi_P^{\sharp},0)^{-1}(V_P \times X) \subset W_Q^{\circ}
 \]is a vector bundle. 
\end{proof}

Let us (tentatively) say that a Poisson submersion with Poisson fibres $p:(\Sigma,\pi_{\Sigma}) \to (M,\pi_M)$ is {\bf strongly locally trivial} if every point $x \in M$ lies in an open set $U \subset M$, over which a diffeomorphism $\phi : U \times X \diffto p^{-1}(U)$ exists, such that
\[
 \mathrm{Gr}(\pi_M)|_U \times \mathrm{Gr}(\pi_X) = \phi^!\mathrm{Gr}(\pi_{\Sigma}).\]Any two fibres of a strongly locally trivial Poisson submersion are Poisson diffeomorphic. It is also worthwhile to consider the case of Poisson submersions in which nearby fibres are \emph{gauge-equivalent}: we call a Poisson submersion with Poisson fibres {\bf locally trivial} if
\begin{align}\label{eq : gauge condition on fibres}
 \mathcal{R}_{\omega}\left(\mathrm{Gr}(\pi_M)|_U \times \mathrm{Gr}(\pi_X)\right) = \phi^!\mathrm{Gr}(\pi_{\Sigma}) 
\end{align}
for a closed two-form $\omega \in \Omega^2(U \times X)$ for which
 \[
 \id + (0,\pi_X)^{\sharp}\omega : U \times TX \rmap U \times TX
 \]is an isomorphism. Note that the latter condition is tantamount to requiring that
 $p:(\Sigma,\pi_{\Sigma}) \to (M,\pi_M)$ has Poisson fibres.

\begin{remark}\normalfont
    Strongly locally trivial Poisson submersions are locally trivial. In favorable circumstances, one may employ the Moser trick \cite[Lemma 4]{PT1} to attempt to promote a local trivialization into a strong one. This strategy (or any strategy, for that matter) may fail, however: below we present an example which is locally trivial, but not strongly locally trivial for completeness reasons:  
\end{remark}

\begin{example}\normalfont
    \noindent \emph{An example of a locally trivial Poisson submersion with Poisson fibres which is not strongly locally trivial.} Let $M=\mathbb{R}_+$ have coordinate $t$, and let
    \begin{align*}
        & X \subset \mathbb{R}^2, & X = \{(x_1,x_2) \ | \ x_1^2+x_2^2<1\}.
    \end{align*}Consider on $M$ the trivial Poisson structure $\pi_M=0$, and on $X$ the standard Poisson structure $\pi_X = \tfrac{\partial}{\partial x_1} \wedge \tfrac{\partial}{\partial x_2}$. Consider the closed two-form
    \begin{align*}
        & \omega:=-\dd(t\alpha), & \alpha:=\tfrac{1}{2}(x_1\dd x_2-x_2\dd x_1),
    \end{align*}on $\Sigma:=M \times X$, equipped with the Poisson structure
    \begin{align*}
        \mathrm{Gr}(\pi_{\Sigma}) & :=\mathcal{R}_{\omega}(\mathrm{Gr}(\pi_M) \times \mathrm{Gr}(\pi_X)) \\
        & = \mathrm{Gr}(\tfrac{1}{1+t}\pi_X).
    \end{align*}
    Then the canonical projection
    \begin{align*}
        p : (\Sigma,\pi_{\Sigma}) \to (M,0)
    \end{align*}
    is a Poisson submersion with Poisson fibres, which is locally trivial by construction. 
    
    However, no strong local trivialization can exist. For example, suppose a strong local trivialization would exist around, say $t=1$. Then for some open set $1 \in U \subset M$, there would exist a smooth embedding $\varphi$,
    \begin{align*}
        \varphi : U \times X \diffto \Sigma,
    \end{align*}
     such that $p\varphi=p$, and
    \begin{align*}
        \varphi : (U,\pi_U) \times (X,\pi_X) \to (\Sigma, \tfrac{1}{1+t}\pi_X)
    \end{align*}is a Poisson map. A such map would induce, for all $t \in U$, Poisson diffeomorphisms
    \begin{align*}
        \varphi_t : (X,\pi_X) \diffto (X,\tfrac{1}{1+t}\pi_X),
    \end{align*}
    which is impossible since the symplectic areas change.
\end{example}

\begin{example}\label{ex: complete}\normalfont
 A Poisson submersion $p:(\Sigma,\pi_{\Sigma}) \to (M,\pi_M)$ is {\bf complete} 
 if the Hamiltonian vector field of a function $f \circ p \in C^{\infty}(\Sigma)$ is 
 complete if that of $f \in C^{\infty}(M)$ is complete. It follows
 from Theorem \ref{thm : restriction of PD submersion over leaf} that a complete Poisson
 submersion with Poisson fibres $p:(\Sigma,\pi_{\Sigma}) \to (M,\pi_M)$ restricts over each leaf $\mathrm{S}_M(x)$ of $(M,\pi_M)$ to
 a strongly locally trivial
 Poisson submersion\[p:(p^{-1}\mathrm{S}_M(x),\pi_{p^{-1}\mathrm{S}_M(x)}) \to (\mathrm{S}_M(x),\pi_{\mathrm{S}_M(x)}).\]
 Indeed, recall that a Poisson submersion is coupling exactly when its horizontal foliation is an Ehreshmann connection. In such a situation  there is a mandatory strategy to attempt to produce a strong local trivialization around any given fibre: To integrate the flows of Hamiltonian vector fields of (appropriate) functions pulled back from the base. The success of the construction for every fibre is equivalent to the completeness of the (coupling) Poisson submersion.
\end{example}

\begin{lemma}
 A locally trivial Poisson submersion is an orthogonal pencil. A strongly locally trivial Poisson submersion is almost-coupling. 
\end{lemma}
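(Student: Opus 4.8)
The plan is to verify both assertions locally over the base, reducing each to the product model on a trivializing set $U\subset M$ and invoking the characterization of orthogonal pencils in Theorem \ref{thm : orthogonal splittings}. Since strong local triviality is the special case $\omega=0$ of local triviality, the second assertion will concern a submersion already known to be an orthogonal pencil by the first, so the only extra work there will be to produce a single global Ehresmann connection witnessing the almost-coupling condition. Throughout I write $L_0:=\mathrm{Gr}(\pi_M)|_U\times\mathrm{Gr}(\pi_X)$ for the product Dirac structure on $U\times X$, whose vertical bundle for $\mathrm{pr}_U:U\times X\to U$ is $0\oplus TX$.

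For the first assertion I would use criterion iv) of Theorem \ref{thm : orthogonal splittings}: $p$ is an orthogonal pencil exactly when $L_\Sigma:=\mathrm{Gr}(\pi_\Sigma)\cap(V\oplus T\Sigma)+V^\circ$ is a vector bundle. As this is a local condition on $\Sigma$ and the sets $p^{-1}(U)$ cover $\Sigma$, it suffices to check it over each $p^{-1}(U)$. There the trivialization $\phi$ induces a Courant isomorphism $\mathbb{T}\phi$ which, being fibrewise, carries $V$ to $0\oplus TX$ and identifies $\mathrm{Gr}(\pi_\Sigma)$ with $\phi^!\mathrm{Gr}(\pi_\Sigma)=\mathcal{R}_\omega(L_0)$. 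Because the gauge transform $\mathcal{R}_\omega$ fixes every covector in $V^\circ$ and leaves the tangent projection unchanged, it commutes with intersection by $V\oplus T\Sigma$ and with addition of $V^\circ$; hence $L_\Sigma$ corresponds under $\mathbb{T}\phi$ to $\mathcal{R}_\omega\big(L_0\cap(V\oplus T\Sigma)+V^\circ\big)$. A direct computation gives $L_0\cap(V\oplus T\Sigma)+V^\circ=\mathrm{Gr}(0\oplus\pi_X)$, a genuine vector bundle, and since $\mathcal{R}_\omega$ and $\mathbb{T}\phi$ are bundle isomorphisms, $L_\Sigma$ is a vector bundle over $p^{-1}(U)$. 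This proves $p$ is an orthogonal pencil.

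For the second assertion, $p$ is already an orthogonal pencil by the above (with $\omega=0$), so by Example \ref{ex : almost-coupling Poisson submersion} it remains only to exhibit one global Ehresmann connection $H$ with $\pi_\Sigma(H^\circ,V^\circ)=0$, equivalently $\pi_\Sigma^\sharp(V^\circ)\subseteq H$. Over each trivializing $U$ the product connection $H_U:=\phi_*(TU\oplus 0)$ works, since strong local triviality gives $\phi^*\pi_\Sigma=\pi_M\oplus\pi_X$ and hence $\pi_\Sigma^\sharp(V^\circ)|_{p^{-1}(U)}=\phi_*(\pi_M^\sharp(T^*U)\oplus 0)\subseteq H_U$. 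The $H_U$ need not agree on overlaps, but the condition $\pi_\Sigma^\sharp(V^\circ)\subseteq H$ is affine in $H$: for a convex combination $H=\sum_\alpha\rho_\alpha H_\alpha$ of connections one checks that the associated vertical projection is $\mathrm{pr}_V^{H}=\sum_\alpha\rho_\alpha\,\mathrm{pr}_V^{H_\alpha}$, so a vector killed by every relevant $\mathrm{pr}_V^{H_\alpha}$ is killed by $\mathrm{pr}_V^{H}$. Patching the $H_U$ by a partition of unity subordinate to the trivializing cover therefore yields a global Ehresmann connection $H$ still containing $\pi_\Sigma^\sharp(V^\circ)$, whence $p$ is almost-coupling.

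The main obstacle is the second assertion. One is tempted to glue the product connections $H_U$ directly, but their transition maps are fibrewise Poisson diffeomorphisms depending on the base point and do not preserve the product splitting, so the $H_U$ genuinely fail to glue. The crux is instead the observation that the property $\pi_\Sigma^\sharp(V^\circ)\subseteq H$ is preserved under affine combinations of connections, which is exactly what legitimizes partition-of-unity patching; by comparison the gauge-transform bookkeeping in the first assertion is routine, the only point requiring care being that $\mathcal{R}_\omega$ acts as the identity on $V^\circ$ and commutes with the two bundle operations defining $L_\Sigma$.
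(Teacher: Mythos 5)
Your proof is correct and follows essentially the same route as the paper's: the first assertion is verified in a trivializing chart via criterion iv) of Theorem \ref{thm : orthogonal splittings}, using that $\mathcal{R}_{\omega}$ preserves tangent projections and fixes $V^{\circ}$ to reduce to the computation $L_0 \cap (V \oplus T^*(U\times X)) + V^{\circ} = T^*U \times \mathrm{Gr}(\pi_X)$, and the second assertion is obtained by patching the local product connections with a partition of unity, justified by the convexity of the almost-coupling condition $\pi_{\Sigma}^{\sharp}(V^{\circ}) \subset H$. Your explicit verification of that convexity via vertical projections is a correct unpacking of what the paper states more tersely.
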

\begin{proof}
 Let $p \colon (\Sigma,\pi_{\Sigma}) \to (M,\pi_M)$ be a locally trivial Poisson submersion. As we already observed, $p$ has Poisson fibres, so in order to show that it is an orthogonal pencil, it suffices by Theorem \ref{thm : orthogonal splittings} to check that the Poisson-Dirac structures on fibres vary smoothly. This is a local matter in $M$, which we need only check over a trivialization $\Phi := \phi_*\mathcal{R}_{\omega} : (U,\pi_M) \times (X,\pi_X) \diffto (p^{-1}(U),\pi_{\Sigma})$. But
 \begin{align*}
     & \mathcal{R}_{\omega}\left(\mathrm{Gr}(\pi_M)|_U \times \mathrm{Gr}(\pi_X)\right) = \phi^!\mathrm{Gr}(\pi_{\Sigma}), & \id + (0,\pi_X)^{\sharp}\omega : U \times TX \diffto U \times TX,
 \end{align*}exhibits the fibres of $p$ as gauge-transformations of $\pi_X$ by the restriction of the smooth form $\omega$ to fibres, which is clearly smooth. Thus a locally trivial Poisson submersion is an orthogonal pencil. 
 
 When the submersion is strongly locally trivial (so that $\omega$ may be chosen to vanish identically), then Ehresmann connections on $\mathrm{pr}:U \times X \to U$ which make it into an almost-coupling submersion form a non-empty convex set, and so a global Ehresmann connection can be built out of a partition of unity subordinated to a trivializing cover, which turns $p:(\Sigma,\pi_{\Sigma}) \to (M,\pi_M)$ into an almost-coupling submersion.
\end{proof}

Let $\pi_P$ be an invariant Poisson structure on the total space of the right principal $G$-bundle $p:P \to M$. Then $p$ pushes $\pi_P$ to a Poisson structure $\pi_M$ on $M$, i.e.  $p \colon (P,\pi_{P}) \to (M,\pi_M)$ is a Poisson submersion. Such a ``Poisson principal bundle" $p \colon (P,\pi_{P}) \to (M,\pi_M)$ is {\bf equivariantly} locally trivial if around each $x \in M$ there is an open set $U \subset M$, over which there exists a principal bundle trivialization $\phi : U \times X \diffto p^{-1}(U)$ and a $G$-invariant closed two-form $\omega \in \Omega^2(U \times G)^G$ for which
 \begin{align*}
     \mathcal{R}_{\omega}\left(\mathrm{Gr}(\pi_M)|_U \times \mathrm{Gr}(\pi_G)\right) = \phi^!\mathrm{Gr}(\pi_{P})
 \end{align*}and
 \begin{align*}
     \id + (0,\pi_G)^{\sharp}\omega : U \times TG \diffto U \times TG.
 \end{align*}is an isomorphism. 

\begin{remark}\label{rem : principal trivializations come for free in the symplectic case}\normalfont
   A symplectic principal bundle with symplectic fibres is always equivariantly locally trivial: the symplectic orthogonal to fibres defines a \ul{complete} principal (symplectic) connection\cite[Proposition 6.6]{CW}.
 \end{remark}

There are Poisson principal bundles which are locally trivial but not equivariantly so, as the example below illustrates.

 \begin{example}\normalfont
 Consider the trivial principal $G$-bundle  $p:(G\times \R,\pi_{P}) \to \R$ endowed with a $G$-invariant vertical Poisson structure. That $\pi_{P}$ be strongly locally trivial is equivalent to $\pi_{p^{-1}(x)}$ having nearby Poisson diffeomorphic fibers (by means of a smooth 1-parameter family of Poisson diffeomorphism). Equivariant strong local triviality amounts to realizing such diffeomorphisms by Lie group automorphisms.

     Let $G:=\mathrm{Aff}(\R)\times \mathrm{Aff}(\R)$, where
     $\mathrm{Aff}(\R)$ is the group of affine motions of the line. Consider the
     the 1-parameter family of linear 2-forms on the Lie algebra of $G$
     \begin{align*}
         & e_1^*\wedge e_2^*+x e_1^*\wedge e_3^*+e_3^*\wedge e_4^* \in \wedge^2 \mathfrak{g}^*, & x\in \R,
     \end{align*}where
     \begin{align*}
         & e_1=\begin{pmatrix} 0 & 0\\ 0 & 1\end{pmatrix}, & e_2=\begin{pmatrix} 0 & 0\\ 1 & 0\end{pmatrix}\in \mathrm{aff}(\R)
     \end{align*}
     and $e_3=e_1$, $e_4=e_2$ belong to a second copy of $\mathrm{aff}(\R)$. Let $\pi_P$ be the Poisson structure corresponding to the 1-parameter family of right-invariant symplectic forms which integrates the previous family of linear 2-forms.
     For different positive values of $x$, the  right-invariant symplectic forms induced on $G$ cannot be related by a Lie group automorphism \cite[Proposition 2.4]{Ov}. However, the Moser trick in logarithmic coordinates produces a symplectic trivialization.
 \end{example}

\begin{lemma}\label{lem : SLT principal has SLT associated}
 Bundles associated to strongly, equivariantly locally trivial principal bundles are strongly locally trivial.
\end{lemma}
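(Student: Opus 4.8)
The plan is to prove the statement locally over the base and then read off strong local triviality of the associated bundle from a product description of $\pi_{\Sigma}$. Fix $m\in M$ and a neighbourhood $U\ni m$ over which the principal bundle is trivial. The first --- and decisive --- step is to produce a local trivialization $\psi:U\times G\diffto P|_U$ that is \emph{simultaneously} a principal ($G$-equivariant) chart and a strong Poisson trivialization, so that $\psi^!\mathrm{Gr}(\pi_P)=\mathrm{Gr}(\pi_M)|_U\times\mathrm{Gr}(\pi_{\mathrm{fib}})$ with $\pi_{\mathrm{fib}}\in\X^2(G)$ a right-invariant bivector; write $r:=\pi_{\mathrm{fib}}(e)\in\wedge^2\gg$. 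Such a $\psi$ carries the right $G$-action on $P$ to right multiplication on the $G$-factor, and hence induces a diffeomorphism $\Phi:U\times X\diffto\Sigma|_U$, $\Phi(u,x)=q(\psi(u,e),x)$, which fits into the identity $q\circ(\psi\times\id)=\Phi\circ\widetilde{q}$, where $\widetilde{q}=\id_U\times a$ and $a:G\times X\to X$, $a(g,x)=gx$, is the action.

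Granting such a $\psi$, I would finish as follows. Since $\psi\times\id$ is a Poisson diffeomorphism onto $(P|_U\times X,(\pi_P,\pi_X))$ and $q$ is a Poisson submersion (Lemma \ref{lem : associated inherits pencils}), the composite $\Phi\circ\widetilde{q}$ is a Poisson submersion from $(U\times G\times X,\ \pi_M\times\pi_{\mathrm{fib}}\times\pi_X)$ onto $(\Sigma|_U,\pi_{\Sigma})$. It then remains to check that $\widetilde{q}=\id_U\times a$ is itself a Poisson submersion onto the \emph{product} $(U\times X,\ \pi_M\times\pi_X^{\Sigma})$ for a fixed fibre structure $\pi_X^{\Sigma}$; since $\Phi$ is a bijection and $\widetilde q$ is a surjective submersion, $\Phi$ is then forced to be a Poisson diffeomorphism, which is exactly strong local triviality of $\Sigma$ over $U$. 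For $F,H\in C^{\infty}(U\times X)$ one computes $\{F\circ\widetilde q,H\circ\widetilde q\}$ in three blocks: the $\pi_M$- and $\pi_X$-blocks produce $(\{F,H\}_{\pi_M}+\{F,H\}_{\pi_X})\circ\widetilde q$, because each $x\mapsto gx$ is a Poisson diffeomorphism of $(X,\pi_X)$, while the $\pi_{\mathrm{fib}}$-block, using right-invariance of $\pi_{\mathrm{fib}}$ together with $(\mathrm{ev}_x)\circ R_g=\mathrm{ev}_{gx}$ for the orbit map $\mathrm{ev}_x:g\mapsto gx$, collapses to the value of $\wedge^2\rho(r)$ at $gx$, a function of $gx$ alone. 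Hence the total bracket is $\widetilde q$-basic and descends to $\pi_X^{\Sigma}:=\pi_X+\wedge^2\rho(r)$, a vertical Poisson structure on the fibre $X$ (it is Poisson because $\wedge^2\rho(r)$ is --- $r$ satisfying the classical Yang--Baxter equation, as $\pi_{\mathrm{fib}}$ is Poisson --- and it commutes with $\pi_X$ since $G$ acts by Poisson diffeomorphisms). This presents $\widetilde q$ as the desired Poisson submersion onto a product and completes the local argument.

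The hard part is the very first step: witnessing the strong local triviality of $P$ by an \emph{equivariant} chart. A (possibly non-equivariant) product trivialization $\psi_0$ exists by hypothesis; its section $s(\cdot)=\psi_0(\cdot,e)$ yields the equivariant chart $\psi(u,g)=s(u)g$, and the comparison $\psi_0^{-1}\psi(u,g)=(u,\beta^u_g(e))$ is governed by the transported right action $\hat R_g(u,h)=(u,\beta^u_g(h))$, which is fibrewise, free and transitive, and preserves the product $\pi_M\times\pi_{\mathrm{fib}}$. The key observation is that product-preservation of $\hat R_g$ annihilates the base--fibre cross terms, forcing the differential $\mathrm{d}_U\beta^u_g$ to be killed by $\pi_M^{\sharp}$, so that $\beta^u_g$ is constant along the symplectic leaves of $\pi_M$; consequently the straightening map $(u,g)\mapsto(u,\beta^u_g(e))$ is a product diffeomorphism in the symplectic directions, and $\psi$ is again a strong trivialization with right-invariant fibre structure. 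Controlling the residual dependence of $\beta^u$ in the \emph{degenerate} directions of $\pi_M$ is the only delicate point, and is where I expect the genuine work to lie; once it is dispatched, the product description above goes through verbatim and yields the claim.
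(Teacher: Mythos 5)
The second half of your argument, from the equivariant chart onwards, is precisely the paper's proof: there too one forms the commuting square $q\circ(\phi,\id_X)=\overline{\phi}\circ(\id_U,\alpha)$, computes $\alpha_*(v^L_g,u)=-\rho(v)+(\alpha_g)_*(u)$, and reads off that $\alpha:(G,\pi_G)\times(X,\pi_X)\to(X,\widetilde{\pi}_X)$ is Poisson for $\widetilde{\pi}_X=\wedge^2\rho(\pi_{G,e})+\pi_X$, so that the induced chart of the associated bundle is a Poisson diffeomorphism from $(U,\pi_M)\times(X,\widetilde{\pi}_X)$. That portion is correct and needs no change.

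The problem is your first step, and it is worse than ``delicate'': the statement you are trying to establish there --- that a $G$-invariant $\pi_P$ admitting \emph{some} strong trivialization admits an \emph{equivariant} one --- is false in general. Take $M=\R$ with $\pi_M=0$, $G=\R^2$ acting by translation on the trivial bundle $P=\R\times\R^2$, and $\pi_P=c(u)\,\tfrac{\partial}{\partial x}\wedge\tfrac{\partial}{\partial y}$ with $c>0$ nonconstant. This is $G$-invariant, and the fibrewise rescaling $(u,x,y)\mapsto(u,\sqrt{c(u)}\,x,\sqrt{c(u)}\,y)$ exhibits it as strongly locally trivial with fibre structure $\tfrac{\partial}{\partial x}\wedge\tfrac{\partial}{\partial y}$; yet every equivariant chart $\psi(u,g)=s(u)\cdot g$ pulls $\pi_P$ back to $c(u)\,\tfrac{\partial}{\partial g_1}\wedge\tfrac{\partial}{\partial g_2}$, which is not a product with a fixed fibre factor. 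Your own analysis locates the obstruction correctly: the transition $\beta^u_g$ is only forced to be constant along the symplectic leaves of $\pi_M$, and in the degenerate directions (here, all of $M$) the residual dependence genuinely cannot be straightened. The paper does not prove this step either --- it simply \emph{reads} the hypothesis ``strongly locally trivial principal bundle'' as saying that the chart $\phi:U\times G\to P$ is a principal bundle trivialization, whence $\pi_G$ is automatically right-invariant by equivariance of $\phi$ and invariance of $\pi_P$. So the repair is not to push harder on your reduction but to adopt that (equivariant) reading of the hypothesis; this is also the only form in which the lemma is invoked later, since the strong trivializations appearing in Proposition \ref{pro : zero or isotropic fiber associated} arise from completeness over a symplectic leaf (Example \ref{ex: complete}), where --- exactly as your leafwise computation shows --- equivariance does come for free.
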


\begin{proof}
 Let $\pi_P$ be an invariant Poisson structure on the total space of the right principal $G$-bundle $p:P \to M$ which makes it into a Poisson submersion with Poisson fibres. If $\pi_P$ is equivariantly strongly locally trivial, around each point $x \in M$ there is a local bundle trivialization $\phi : U \times G \to P$ and a $G$-invariant Poisson structure $\pi_G$ on $G$, such that $(\pi_M|_U,\pi_G)=\phi^*\pi_P$. Then $(\phi,\mathrm{id}_X) : U \times G \times X \to P \times X$ induces a local trivialization $\overline{\phi}: U \times X \to \Sigma$ of the associated bundle $\Sigma=P \times_GX$, making
  \begin{equation}\label{eq:associated trivialization}
   \xymatrix{
   U \times G \times X \ar[d]_{(\mathrm{id}_M,\alpha)}\ar[rr]^{(\phi,\mathrm{id}_X)} && P \times X \ar[d]^q\\
   U \times X \ar[rr]_{\overline{\phi}} && \Sigma
   }
  \end{equation} commute, where $\alpha : G \times X \to X$ is the action map. Now, $\pi_G$ being invariant under the right action of $G$ on itself means that $\pi_{G,g} = (r_g)_*(\pi_{G,e})$ for all $g \in G$. Observe that
\begin{align*}
 & \alpha_*(v^L_g,u) = -\rho(v) + (\alpha_g)_*(u), & v \in \mathfrak{g}, \quad u \in TX,
\end{align*}where $v^L \in \X(G)$ denotes the left-invariant vector field corresponding to $v$, and $\rho(v) \in \X(X)$ denotes the infinitesimal action of $v$. This implies that
\begin{align}\label{eq: new fiber factor}
 & \alpha : (G,\pi_{G}) \times (X,\pi_X) \rmap (X,\widetilde{\pi}_X), & \widetilde{\pi}_X:=\wedge^2\rho(\pi_{G,e}) + \pi_X
\end{align}is a Poisson map, and therefore
\begin{align*}
 & \overline{\phi} : (U,\pi_M) \times (X,\widetilde{\pi}_X) \diffto (\Sigma,\pi_{\Sigma})
\end{align*}is a Poisson diffeomorphism, whence $\pi_{\Sigma}$ is strongly locally trivial.
\end{proof}

In yet another tentative flavor of local triviality of a Poisson submersion with Poisson fibres $p:(\Sigma,\pi_{\Sigma}) \to (M,\pi_M)$, one could require that it have a {\bf locally trivial foliation} --- namely, that around each point in $M$ there exist a trivialization $\phi : U \times X \diffto p^{-1}(U)$ with the property that
\[
 \mathrm{Gr}(\pi_M)|_U \times \mathrm{Gr}(\pi_X) \quad \text{and} \quad \phi^!\mathrm{Gr}(\pi_{\Sigma}) \quad \text{induce the same singular foliation on} \ U \times X.
 \]Clearly, locally trivial Poisson submersions have locally trivial foliation.

\begin{example}\label{ex : toy example of CP1}\normalfont 
\emph{An example of a Poisson submersion with Poisson fibres which has locally trivial foliation, and yet is not locally trivial.} Consider the submersion
\begin{align*}
 p:\Sigma=\mathbb{C}^2 \diagdown \{0\} \rmap \mathbb{C}P^1=M
\end{align*}of Example \ref{ex : coregular submersion with discontinuous Poisson structure on fibres}, but now equipped with the Poisson structure
\begin{align*}
 \pi_{\Sigma} = \tfrac{1}{2}(\mathscr{E}_0 \wedge \mathscr{V}_0+\mathscr{E}_1 \wedge \mathscr{V}_1).
\end{align*}This bivector is invariant under the $\mathbb{C}^{\times}$-action, and hence $p$ pushes $\pi_{\Sigma}$ to a Poisson structure $\pi_M$ on $M$. Consider the diagram of open embeddings
\begin{align*}
 & \xymatrix{
 \mathbb{C} \times \mathbb{C}^{\times} \ar[r]^{\phi_0} \ar[d]_{\pr_1} & \Sigma \ar[d]^p & \mathbb{C} \times \mathbb{C}^{\times} \ar[l]_{\phi_1} \ar[d]^{\pr_1}\\
 \mathbb{C} \ar[r] & M & \mathbb{C} \ar[l]
 }
  & \xymatrix{
 (z_0,z_1) \ar[r] \ar[d] & (z_1,z_0z_1) \ar[d] & (z_0z_1,z_1) \ar[d] & (z_0,z_1) \ar[l] \ar[d]\\
 z_0 \ar[r] & [1:z_0] & [z_0:1] & z_0 \ar[l]
 }
\end{align*}Then both $\phi_i$ are Poisson embeddings for the Poisson structure
\begin{align*}
 & \phi_i : (\mathbb{C} \times \mathbb{C}^{\times},\Pi) \rmap (\Sigma,\pi_{\Sigma}), & \Pi = \left(\mathscr{E}_0 -\tfrac{1}{2}\mathscr{E}_1\right)\wedge \left(\mathscr{V}_0 -\tfrac{1}{2}\mathscr{V}_1\right) +\tfrac{1}{4}\left(\mathscr{E}_1 \wedge \mathscr{V}_1\right)
\end{align*} From this, we read off that $\pr_1:(\mathbb{C} \times \mathbb{C}^{\times},\Pi) \to (\mathbb{C},\mathscr{E}_0 \wedge \mathscr{V}_0)$ is a Poisson submersion with Poisson fibres $(\mathbb{C}^{\times},\tfrac{1}{4}\mathscr{E}_1 \wedge \mathscr{V}_1)$, and that 
\begin{align}\label{eq : not locally trivial but locally trivial foliation}
 \mathrm{Gr}(\Pi) \qquad \text{and} \qquad \mathrm{Gr}(\mathscr{E}_0 \wedge \mathscr{V}_0) \times \mathrm{Gr}(\tfrac{1}{4}\mathscr{E}_1 \wedge \mathscr{V}_1)   
\end{align}
have the same singular foliation. From this it follows that $p:(\Sigma,\pi_{\Sigma}) \to (M,\pi_M)$ is a Poisson submersion with Poisson fibres with locally trivial foliation. Note that both $(M,\pi_M)$ and $(\Sigma,\pi_{\Sigma})$ have exactly three symplectic leaves. We claim that this Poisson submersion with Poisson fibres is not locally trivial around the fibres of $p$ over the singular points of $(M,\pi_M)$ --- that is, around the two singular leaves of $(\Sigma,\pi_{\Sigma})$. Indeed, suppose by contradiction that there exist:
\begin{itemize}
    \item an open neighborhood $U \subset \C$ of $0$;
    \item a closed two-form $\omega \in \Omega^2(U \times \C^{\times})$;
    \item a diffeomorphism $\varphi : U \times \C^{\times} \diffto U \times \C^{\times}$ of the form
    \begin{align*}
       &  \varphi(x_0,y_0,x_1,y_1)=(x_0,y_0,u_1,v_1), & (x_0,y_0) \in U, \ \ (x_1,y_1) \in \C^{\times},
    \end{align*}
\end{itemize}
with the property that
\begin{align}\label{eq : not locally trivial}
    \mathcal{R}_{\omega}\mathrm{Gr}(\mathscr{E}_0 \wedge \mathscr{V}_0+\tfrac{1}{4}\mathscr{E}_1 \wedge \mathscr{V}_1) = \varphi^!\mathrm{Gr}(\Pi).
\end{align}Because
\begin{align*}
\mathrm{Gr}(\Pi)|_{U \diagdown \{0\} \times \C^{\times}} & = \mathrm{Gr}(\sigma_1)\\
    \mathrm{Gr}(\mathscr{E}_0 \wedge \mathscr{V}_0+\tfrac{1}{4}\mathscr{E}_1 \wedge \mathscr{V}_1)|_{U \diagdown \{0\} \times \C^{\times}} & = \mathrm{Gr}(\sigma_2)
\end{align*}for symplectic forms $\sigma_1,\sigma_2 \in \Omega^2(U \diagdown \{0\})$. Condition \eqref{eq : not locally trivial} then reads:
\begin{align*}
    \omega = \varphi^*\sigma_1 - \sigma_2.
\end{align*}Note that
\begin{align}\label{eq : unbounded contraction}
\la \omega,\tfrac{\partial}{\partial x_0}\wedge \tfrac{\partial}{\partial y_0}\ra =  \tfrac{2}{(x_0^2+y_0^2)(u_1^2+v_1^2)}(f),
\end{align}
where
\begin{align*}
f  = & -x_0\left(v_1(\tfrac{\partial v_1}{\partial x_0}+\tfrac{\partial u_1}{\partial y_0})+u_1(\tfrac{\partial v_1}{\partial y_0}+\tfrac{\partial u_1}{\partial x_0})\right)+
y_0\left(u_1(\tfrac{\partial v_1}{\partial x_0}+\tfrac{\partial u_1}{\partial y_0})-v_1(\tfrac{\partial v_1}{\partial y_0}+\tfrac{\partial u_1}{\partial x_0})\right)\\
 & -2(x_0^2+y_0^2)\left(\tfrac{\partial u_1}{\partial x_0}\tfrac{\partial v_1}{\partial y_0}-\tfrac{\partial u_1}{\partial y_0}\tfrac{\partial v_1}{\partial x_0}\right)-\tfrac{1}{2}(u_1^2+v_1^2).
\end{align*}
We thus see that
\begin{align*}
    \lim_{(x_0,y_0,x_1,y_1) \to (0,0,1,0)}f = -\tfrac{1}{2}
\end{align*}implies that
\begin{align*}
    \lim_{(x_0,y_0,x_1,y_1) \to (0,0,1,0)}\la \omega,\tfrac{\partial}{\partial x_0}\wedge \tfrac{\partial}{\partial y_0}\ra = \lim_{(x_0,y_0,x_1,y_1) \to (0,0,1,0)}\tfrac{2}{(x_0^2+y_0^2)(u_1^2+v_1^2)}f = -\infty.
\end{align*}This shows that \eqref{eq : unbounded contraction} cannot extend to a smooth function on the whole $U \times \C^{\times}$, and therefore no such $\omega$ can exist (on the whole $U \times \C^{\times}$). Therefore, the Poisson submersion with Poisson fibres $p:(\Sigma,\pi_{\Sigma}) \to (M,\pi_M)$ is not locally trivial, in spite of having a locally trivial foliation.
\end{example}

 \begin{example}\normalfont \emph{An example of a bundle associated to locally trivial principal bundle whose foliation is not locally trivial.} The translation action
\begin{align*}
 & G=\mathbb{R}^2 \curvearrowright P=\{(y_1,y_2,y_3) \in \mathbb{R}^3 \ | \ y_3>0\}, & (a,b)\cdot (y_1,y_2,y_3) = (a+y_1,b+y_2,y_3),
\end{align*}turns 
\begin{align*}
 & p:P \to M=\mathbb{R}_+, & p(y_1,y_2,y_3)=y_3
\end{align*}into a principal $G$-bundle. The Poisson structure
\begin{align*}
 \pi_P=-y_3\tfrac{\partial}{\partial y_1}\wedge \tfrac{\partial}{\partial y_2}
\end{align*}on $P$ is $G$-invariant, and $p:(P,\pi_P) \to (M,0)$ is a locally trivial Poisson submersion with Poisson fibres. Let $G$ act on the Poisson manifold
\begin{align*}
 & X = \mathbb{R}^2, & \pi_X=\tfrac{\partial}{\partial x_1}\wedge \tfrac{\partial}{\partial x_2}
\end{align*}by $(a,b) \cdot (x_1,x_2) = (a+x_1,b+x_2)$. The submanifold
\begin{align*}
    & \Sigma \subset P \times X, & \Sigma = \{(0,0)\} \times \mathbb{R}_{+} \times \mathbb{R}^2
\end{align*} is a full slice to the action of $G$, and is therefore identified with the quotient $\Sigma = P \times_G X$. The associated Poisson structure is
\begin{align*}
 \pi_{\Sigma}=(1-y_3)\tfrac{\partial}{\partial x_1}\wedge \tfrac{\partial}{\partial x_2},
\end{align*}
which does not have locally trivial foliation around $y_3=1$.
\end{example}
Next, we observe that 

 \begin{proposition}\label{pro : zero or isotropic fiber associated}
  Let a right principal $G$-bundle $p:P\to M$ be endowed with a $G$-invariant Poisson structure $\pi_P$, 
  which makes it into a Poisson submersion with Poisson fibres. If $G$ acts by Poisson diffeomorphisms on a Poisson manifold $(X,\pi_X)$, with orbits contained in the symplectic leaves of $(X,\pi_X)$, and either:
  \begin{enumerate}[a)]
   \item The Poisson structures on the fibres of $p:P \to M$ are all trivial, or
   \item The orbits of $G\curvearrowright X$ are isotropic submanifolds of the symplectic leaves of $(X,\pi_X)$,
  \end{enumerate}then the induced Poisson structure $\pi_{\Sigma}$ on the associated bundle $\Sigma=P \times_GX \to M$ has locally trivial foliation, and its leaf space is homeomorphic to that of
  $\mathrm{Gr}(\pi_M) \times \mathrm{Gr}(\pi_X)$.
 \end{proposition}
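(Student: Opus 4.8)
The plan is to work locally over the base and reduce everything to an explicit pushforward computation. Fix $x_0\in M$ and a trivialization $\phi:U\times G\diffto p^{-1}(U)$ of the principal bundle, which as in \eqref{eq:associated trivialization} induces a trivialization $\overline{\phi}:U\times X\diffto p^{-1}(U)$ of $\Sigma$. Writing $\Pi_P:=\phi^*\pi_P$ and $\beta:=(\mathrm{id}_M,\alpha):U\times G\times X\to U\times X$, $\beta(u,h,x)=(u,hx)$, the commuting square shows that $\beta:(U\times G\times X,(\Pi_P,\pi_X))\to(U\times X,\overline\phi^*\pi_\Sigma)$ is a surjective Poisson submersion; the same square with $\pi_X$ (resp.\ $\Pi_P$) replaced by $0$ identifies the vertical and horizontal parts $\overline\phi^*\pi_V=0\oplus\pi_X$ and $\overline\phi^*\pi_H=\beta_*(\Pi_P,0)$ of Lemma \ref{lem : associated inherits pencils}. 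The first structural input I would record is that, since $G$ acts by Poisson diffeomorphisms and each orbit lies in a symplectic leaf of $(X,\pi_X)$, every symplectic leaf of $(X,\pi_X)$ is $G$-invariant: indeed $g$ carries $\mathrm{S}_X(x)$ onto $\mathrm{S}_X(gx)=\mathrm{S}_X(x)$, the last equality because $gx$ and $x$ lie in a common leaf.

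Next I would identify the symplectic foliation of $\overline\phi^*\pi_\Sigma$. Because $\beta$ is a surjective Poisson submersion its leaves are the $\beta$-images of the leaves of $(\Pi_P,\pi_X)$, and the latter are the products $L\times S$ with $L$ a leaf of $\Pi_P$ and $S$ a leaf of $\pi_X$. Using the $G$-invariance of $S$ one has $\beta(L\times S)=\mathrm{pr}_U(L)\times S$, and since $\mathrm{pr}_U:(U\times G,\Pi_P)\to(U,\pi_M)$ is a Poisson submersion it maps $L$ onto the leaf $\mathrm{S}_M(u)$ through $u$. Hence the leaves of $\overline\phi^*\pi_\Sigma$ are exactly the products $\mathrm{S}_M(u)\times \mathrm{S}_X(x)$, i.e.\ the leaves of $\mathrm{Gr}(\pi_M)|_U\times\mathrm{Gr}(\pi_X)$. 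This already matches the two partitions leaf by leaf; assembling the trivializations and observing that the transition cocycle takes values in $G$, which acts trivially on the leaf space $X/\mathcal{S}_X$ (each leaf being $G$-invariant), the local product descriptions glue to the asserted homeomorphism between the leaf space of $(\Sigma,\pi_\Sigma)$ and that of $\mathrm{Gr}(\pi_M)\times\mathrm{Gr}(\pi_X)$.

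It remains to promote the coincidence of partitions to a coincidence of \emph{singular foliations} in the Stefan--Sussmann sense --- that is, equality of the characteristic modules $\mathrm{pr}_{T}(\overline\phi^!\mathrm{Gr}(\pi_\Sigma))=\pi_\Sigma^\sharp(\Omega^1)$ and $\mathrm{pr}_T(\mathrm{Gr}(\pi_M)|_U\times\mathrm{Gr}(\pi_X))=\mathcal{F}_{\pi_M}+\mathcal{F}_{\pi_X}$ --- and this is exactly where hypotheses a) and b) enter. I expect this to be the main obstacle, since two smooth Poisson structures may share all their leaves yet have different characteristic modules (e.g.\ $x\tfrac{\partial}{\partial x}\wedge\tfrac{\partial}{\partial y}$ and $x^2\tfrac{\partial}{\partial x}\wedge\tfrac{\partial}{\partial y}$ on $\R^2$), so the product partition found above does not by itself force the modules to agree. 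Writing the generators of $\pi_\Sigma^\sharp(\Omega^1)$ as $\pi_\Sigma^\sharp(\mathrm{d}(f\circ\mathrm{pr}_U))=\pi_H^\sharp(\mathrm{d}(f\circ\mathrm{pr}_U))$ and $\pi_\Sigma^\sharp(\mathrm{d}(g\circ\mathrm{pr}_X))=\pi_H^\sharp(\mathrm{d}(g\circ\mathrm{pr}_X))+\pi_X^\sharp(\mathrm{d}g)$, the task is to show that the vertical corrections carried by the $\pi_H$-terms --- which by the leaf description above are tangent to the $G$-orbits inside the leaves $\mathrm{S}_X$ --- already belong to $\mathcal{F}_{\pi_X}$, and symmetrically that the flat horizontal lifts $\pi_M^\sharp(\mathrm{d}f)\otimes 1$ lie in $\pi_\Sigma^\sharp(\Omega^1)$.

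In case a) the fibres of $p:P\to M$ carry the zero Poisson structure, so by Theorem \ref{thm : orthogonal splittings} the vertical bivectors on fibres assemble to $0$ and $p:(P,\pi_P)\to(M,\pi_M)$ is an orthogonal pencil with $\pi_P^\sharp(T^*P)\cap V_P=0$; by Lemma \ref{lem : associated inherits pencils} so is $p:(\Sigma,\pi_\Sigma)\to(M,\pi_M)$, with vertical part precisely the product $\pi_X$. On the dense open set where $\pi_H$ has locally constant rank the submersion is almost-coupling, so there $\pi_\Sigma^\sharp(\Omega^1)=\mathcal{F}_{\pi_V}+\mathcal{F}_{\pi_H}$ with $\mathcal{F}_{\pi_V}=\mathcal{F}_{\pi_X}$ and $\mathcal{F}_{\pi_H}$ the horizontal lift of $\mathcal{F}_{\pi_M}$; a density argument as in the proof of Theorem \ref{thm : orthogonal splittings} extends the splitting, and since $\pi_H^\sharp(T^*\Sigma)\cap V=0$ forces the horizontal generators to project isomorphically onto those of $\pi_M$ with vertical corrections landing in $\mathcal{F}_{\pi_X}$, the two modules agree. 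In case b) I would instead argue leafwise: restricting over a symplectic leaf $\mathrm{S}_M(x)$ via Theorem \ref{thm : restriction of PD submersion over leaf} turns the submersion into a coupling over a symplectic base, and isotropy of the $G$-orbits inside the symplectic leaves of $X$ makes the mixed term $\pi_H^{1,1}$ symplectically orthogonal to the vertical directions, so that the vertical corrections are again absorbed into $\mathcal{F}_{\pi_X}$ without altering the degeneration of $\pi_X$ transverse to its singular locus. Either way $\pi_\Sigma^\sharp(\Omega^1)=\mathcal{F}_{\pi_M}+\mathcal{F}_{\pi_X}$ locally, which is precisely the statement that $\pi_\Sigma$ has locally trivial foliation.
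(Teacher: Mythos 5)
There is a genuine gap, and it sits at the foundation of your argument: the assertion that, because $\beta$ is a surjective Poisson submersion, the symplectic leaves of $\overline{\phi}^*\pi_{\Sigma}$ are the $\beta$-images of the leaves of $(\Pi_P,\pi_X)$. This is false for Poisson submersions in general, and false even for quotients by free actions preserving the Poisson structure: only one containment holds (each leaf downstairs is \emph{contained in} the image of a leaf upstairs, obtained by lifting Hamiltonian flows of basic functions --- legitimate here because $G$-invariance makes the submersion complete), while the reverse containment --- that the image of a single leaf upstairs does not spill across several leaves downstairs --- is precisely the content of the proposition and is exactly where hypotheses a) and b) must enter. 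A decisive check: your derivation of the product leaf structure $\mathrm{S}_M(u)\times\mathrm{S}_X(x)$ uses only the standing hypothesis that orbits lie in leaves, never a) or b); but the paper's example of a bundle associated to a locally trivial principal bundle whose foliation is not locally trivial (the one with $\pi_{\Sigma}=(1-y_3)\tfrac{\partial}{\partial x_1}\wedge\tfrac{\partial}{\partial x_2}+\tfrac{\partial}{\partial x_3}\wedge\tfrac{\partial}{\partial x_4}$ over $\R_+$) satisfies all the standing hypotheses, violates a) and b), and its leaves over $y_3=1$ are not products. So that step cannot be correct as stated. The same example shows that the ``density argument'' you invoke in case a) cannot close the gap either: equality of singular foliations does not propagate from a dense open set, and in the counterexample the failure is concentrated exactly on the nowhere-dense locus $y_3=1$.

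What is missing is the paper's key reduction. Since preimages of symplectic leaves of $(M,\pi_M)$ are Poisson submanifolds saturated for both foliations (Theorem \ref{thm : restriction of PD submersion over leaf}), one may assume $\pi_M$ symplectic; then $p:(P,\pi_P)\to(M,\pi_M)$ is a coupling which, being $G$-invariant, is complete and hence \emph{strongly} locally trivial (Example \ref{ex: complete}), so by Lemma \ref{lem : SLT principal has SLT associated} the associated bundle is Poisson-diffeomorphic over a trivializing $U$ to $(U,\pi_M)\times(X,\widetilde{\pi}_X)$ with $\widetilde{\pi}_X=\wedge^2\rho(\pi_{G,e})+\pi_X$. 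Hypotheses a) and b) then enter in one clean statement: a) forces $\pi_{G,e}=0$, so $\widetilde{\pi}_X=\pi_X$; b) forces the correction $\wedge^2\rho(\pi_{G,e})$ to have isotropic image inside the leaves of $\pi_X$, whence $\widetilde{\pi}_X$ and $\pi_X$ induce the same singular foliation. Your case b) does gesture at restricting over leaves, but without the strong local triviality step the ``mixed term'' analysis has nothing concrete to act on; and your case a) skips the reduction entirely. The first paragraph of your proposal ($G$-invariance of the leaves of $\pi_X$, and the resulting identification of leaf spaces once the local product description is in hand) is correct and agrees with the paper's opening step.
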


Observe that condition b) is satisfied for example when G is
abelian and the action of G on X admits an (infinitesimal) moment map.

 \begin{proof}
 First observe that, because orbits of $G\curvearrowright X$ are tangent to the symplectic leaves of $(X,\pi_X)$, there is a canonical singular foliation $\mathcal{S}_{\Sigma}$ on $\Sigma$ induced by the singular foliations $\mathcal{S}_{M}$ on $M$ and $\mathcal{S}_{X}$ on $X$, determined by the Poisson structures $\pi_M$ and $\pi_X$: if $g_{ij}:U_i \cap U_j \to G$ is a cocycle representing $P \to M$, its composition with the action homomorphism $\psi:G \to \mathrm{Diff}(X)$ is a cocycle representing $\Sigma \to M$, and the product singular foliations $(U_i,\mathcal{S}_{M}) \times (X,\mathcal{S}_{X})$ descend under the identifications
 \begin{align*}
  & \phi_{ij}: U_i \cap U_j \times X \stackrel{\simeq}{\rmap} U_i \cap U_j \times X, & \phi_{ij}(y,x) = (y,\psi(g_{ij}(y))x)
 \end{align*}to a singular foliation $\mathcal{S}_{\Sigma}$ on $\Sigma$, which is locally trivial by its very construction, and which is independent of the choice of cocycle. Observe moreover that the hypothesis that the action of $G$ is tangent to leaves of $\mathcal{S}_X$ implies that the cocycle $(U_i,\phi_{ij})$ induces the identity map
 \begin{align*}
  U_i \cap U_j \times X/\mathcal{S}_X \stackrel{\simeq}{\rmap} U_i \cap U_j \times X/\mathcal{S}_X,
 \end{align*}where $X/\mathcal{S}_{X}$ of $\mathcal{S}_{X}$ --- that is, the topological space obtained from $X$ by identifying points which lie in the same leaf, equipped with the quotient topology. Therefore
 \begin{align*}
  \Sigma/\mathcal{S}_{\Sigma} \simeq M/\mathcal{S}_M \times X/\mathcal{S}_X.
 \end{align*}
  We claim that, under either of the hypotheses a) or b) above, the singular foliation on $\Sigma$ corresponding to the Poisson structure $\pi_{\Sigma}$ coincides with the locally trivial singular foliation constructed in the preceding paragraph. The key observation is that it suffices to check this claim for coupling submersions:
 \vspace{0.2cm}
 
 \noindent \emph{The preimage under $\Sigma \to M$ of a leaf of $\pi_M$ is saturated for both $\mathcal{S}_{\Sigma}$ and $\pi_{\Sigma}$.} For each $y \in P$, the preimage $P|_{\mathrm{S}_M(py)}=p^{-1}\mathrm{S}_M(py)$ of the symplectic leaf of $(M,\pi_M)$ through $py$ is a Poisson submanifold of $(P,\pi_P)$, and
 \begin{align}\label{eq: preimage submanifold} 
 p:(P|_{\mathrm{S}_M(py)},\pi_{P|_{\mathrm{S}_M(py)}}) \rmap (\mathrm{S}_M(py),\omega_{\mathrm{S}_M(py)})
\end{align}is a coupling Poisson submersion. It is also a principal $G$-bundle equipped with a $G$-invariant Poisson structure, and its associated bundle
 \begin{align}\label{eq: associated submanifold} 
 & p:(\Sigma|_{\mathrm{S}_M(py)},\pi_{\Sigma|_{\mathrm{S}_M(py)}}) \rmap (\mathrm{S}_M(py),\omega_{\mathrm{S}_M(py)}), & \Sigma|_{\mathrm{S}_M(py)} = P|_{\mathrm{S}_M(py)}\times_GX
\end{align}is the preimage of $\mathrm{S}_M(py)$ under $p:\Sigma \to M$ --- which (again by Theorem \ref{thm : restriction of PD submersion over leaf}) is a Poisson submanifold of $(\Sigma,\pi_{\Sigma})$. 

\vspace{0.2cm}

As a consequence, in order the prove the proposition, it suffices to verify that:

\vspace{0.2cm}

\noindent \emph{If $\pi_M$ is symplectic, then $\pi_{\Sigma}$ induces $\mathcal{S}_{\Sigma}$.} For in that case $p:(P,\pi_P) \to (M,\pi_M)$ is a coupling Poisson submersion, which, being $G$-invariant, is strongly locally trivial (as in Example \ref{ex: complete}). By Remark \eqref{rem : principal trivializations come for free in the symplectic case}, a local trivialization
\begin{align*}
 \phi : (U,\pi_M) \times (G,\pi_G) \rmap (P,\pi_{P})
\end{align*}of (\ref{eq: associated submanifold}) induces a local trivialization
\begin{align*}
 \overline{\phi} : (U,\pi_M) \times (X,\widetilde{\pi}_X) \rmap (\Sigma,\pi_{\Sigma}),
\end{align*}where $\widetilde{\pi}_X:=\wedge^2\rho(\pi_{G,e}) + \pi_X$ in the notation of (\ref{eq: new fiber factor}). 
 The proof concludes with the observation that, under either assumption a) or b) in the statement, $\widetilde{\pi}_X$ and $\pi_X$ induce the same singular foliation $\mathcal{S}_{X}$ on $X$.
For a) this is straightforward, whereas for b) one still needs to argue that $\rho(\mathfrak{g})\subset \widetilde{\pi}_X^{\sharp}(T^*X)$. But for any $v\in \mathfrak{g}$ and any $x\in X$, since the orbit $G\cdot x$ is isotropic  we have
\begin{align*}
 & \rho(v)_x=\pi_X^{\sharp}(\xi_x), & \xi_x\in N^*_x(G\cdot x),
\end{align*}
 which gives $\widetilde{\pi}_X^{\sharp}(\xi_x)=\pi_X^{\sharp}(\xi_x)$, since $\wedge^2\rho(\pi_{G,e})$ vanishes on $N^*_x(G\cdot x)$. Therefore
\begin{align*}
 (U_i,\pi_M) \times (X,\widetilde{\pi}_X) \qquad \text{defines} \qquad \phi^*\mathcal{S}_{\Sigma} = \mathcal{S}_{M}|_{U_i} \times \mathcal{S}_{X}.
\end{align*} 
Hence $\mathcal{S}_{\Sigma}$ is the singular foliation induced by $\pi_{\Sigma}$. 
 \end{proof}

\subsection{Poisson structures with finitely many leaves.}

As a final application of our methods, we construct associated bundles in which the Poisson structure on the total space has a finite number of symplectic leaves. The building blocks of our construction are two classes of Poisson manifolds with finitely many leaves:

\begin{enumerate}[i)]
 \item \emph{toric Poisson manifolds} (coming from a non-degenerate positive bivector as described in Proposition \ref{pro : positive GIT}),  whose symplectic leaves are orbits of the action of a complex torus;
 \item \emph{manifolds of full flags} (as described in Proposition \ref{pro : Lu-Weinstein}), whose symplectic leaves are Bruhat cells \cite{LW};
\end{enumerate}

\begin{proposition}\label{pro: bruhat base finite} Let $(G,\pi_G)$ be a compact, connected semisimple Lie group with its ``standard'' Poisson structure corresponding to the maximal torus $T \subset G$, and let $(X,\pi_X)$ be either:

--- a manifold of full flags $G'/T'$, again with its ``standard'' Poisson structure, or

--- a toric Poisson manifold $T'_\C \curvearrowright M_\Delta$.

Then any group homomorphism $T \to T'$ determines on the associated bundle $\Sigma=G\times_T X$ a Poisson structure $\pi_\Sigma$  with a finite number of symplectic leaves.
\end{proposition}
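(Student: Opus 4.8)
The plan is to recognize $\Sigma = G \times_T X$ as an instance of the associated-bundle machinery of Proposition \ref{pro : zero or isotropic fiber associated}, with the structure group taken to be the maximal torus $T$, the total space of the principal bundle taken to be $G$ itself, and the base the flag manifold $M = G/T$. The reward of that proposition is precisely a homeomorphism of leaf spaces $\Sigma/\mathcal{S}_\Sigma \simeq M/\mathcal{S}_M \times X/\mathcal{S}_X$, so once its hypotheses are verified the statement collapses to the observation that each factor is a finite set.

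First I would assemble the data of the principal bundle. Viewing $p : G \to G/T$ as a right principal $T$-bundle, Proposition \ref{pro : Lu-Weinstein} supplies everything needed on this side: $\pi_G$ is $T$-invariant (equivalently $[\mathfrak{t},\mathfrak{h}] \subset \mathfrak{h}$, by Proposition \ref{pro : quotient by a subgroup}\,a)), the map $p$ is a coregular Poisson submersion, and crucially its fibres carry the \emph{zero} Poisson structure. This last point is what lets me invoke the first alternative (a) of Proposition \ref{pro : zero or isotropic fiber associated}, so I never have to check any isotropy condition on the fibre. Moreover, by \cite{LW} the symplectic leaves of $\pi_M$ on $G/T$ are the Bruhat cells, finite in number, so $M/\mathcal{S}_M$ is a finite set.

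The heart of the argument — and the step I expect to be the main obstacle — is verifying the remaining hypothesis of Proposition \ref{pro : zero or isotropic fiber associated}: that through the homomorphism $T \to T'$ the group $T$ acts on $(X,\pi_X)$ by Poisson diffeomorphisms with orbits contained in symplectic leaves. Since $T$ acts through $T'$, it suffices to check this for $T'$. In the toric case $X = M_\Delta$ this is essentially built in: the structure $\pi_X$ of Proposition \ref{pro : positive GIT} is invariant under the complex torus $T'_\C$ whose orbits are its symplectic leaves, so the compact torus $T'$ acts by Poisson diffeomorphisms with orbits inside leaves, and these leaves are finite in number. The flag case $X = G'/T'$ demands more care: here I would apply Proposition \ref{pro : quotient by a subgroup}\,a) to the pair $(G',T')$ to see that $\pi_{G'}$ is $T'$-invariant \emph{and vanishes along} $T'$; multiplicativity then forces left translation by $t' \in T'$ to be a Poisson diffeomorphism of $(G',\pi_{G'})$, since the right-translation term in $\pi_{G',t'g} = (l_{t'})_*\pi_{G',g} + (r_g)_*\pi_{G',t'}$ vanishes. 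Because left translations commute with the defining right $T'$-action, they descend to Poisson diffeomorphisms of $(G'/T',\pi_X)$, and the $T'$-orbits lie inside the Bruhat cells because those cells are left-$T'$-invariant.

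Finally, with both hypotheses established, I would invoke Proposition \ref{pro : zero or isotropic fiber associated}\,a) to produce a Poisson structure $\pi_\Sigma$ on $\Sigma = G \times_T X$ whose symplectic foliation is locally trivial and has leaf space homeomorphic to $M/\mathcal{S}_M \times X/\mathcal{S}_X$. Since $X$ has finitely many leaves in either instance — the Bruhat cells of $G'/T'$, or the finitely many $T'_\C$-orbits of a toric Poisson manifold — both factors are finite, and therefore $(\Sigma,\pi_\Sigma)$ has only finitely many symplectic leaves.
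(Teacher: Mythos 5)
Your proposal is correct and follows essentially the same route as the paper: reduce to Proposition \ref{pro : zero or isotropic fiber associated}(a) via Proposition \ref{pro : Lu-Weinstein} (trivial Poisson structures on the fibres of $G \to G/T$), check that the $T'$-orbits in $X$ lie inside symplectic leaves, and read off finiteness of the leaf space from the product decomposition. The only (immaterial) divergence is in the flag case, where you justify "orbits inside leaves" by the left-$T'$-invariance of the Bruhat cells, while the paper deduces it from connectedness of $T'$ together with the finiteness of the leaf set.
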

\begin{proof}
By Proposition \ref{pro : Lu-Weinstein} the Poisson submersion $(G,\pi_G)\to (G/T,\pi_{G/T})$ has Poisson fibres with the trivial Poisson structure. We argue that the orbits of the torus action on both manifolds of full flags and toric varieties lie inside symplectic leaves --- and thus fall within the hypotheses of Proposition \ref{pro : zero or isotropic fiber associated}. This in particular implies that the leaf space of the induced Poisson structure $\pi_{\Sigma}$ on the ensuing associated bundle $\Sigma$ is homeomorphic to the product of the leaf space of $(M,\pi_X)$ and that of $(X,\pi_X)$.

--- For a manifold of full flags $G'/T'$, the left action of $T'$ on $(G',\pi_G')$ is by Poisson diffeomorphisms (since inversion on $G$ is an anti-Poisson map), and since $(G'/T',\pi_{G'/T'})$ has finitely many leaves, the orbits of  $T' \curvearrowright G'/T'$ lie inside symplectic leaves.

--- For a toric Poisson manifold $T' \curvearrowright M_\Delta$, the action of $T'_\C$ on $M_\Delta$ is by Poisson diffeomorphism, and its orbits are exactly the symplectic leaves of the Poisson structure.
\end{proof}

\begin{example}\label{ex : flag base}\normalfont
Consider the Poisson-Lie group $(\mathrm{SU}_2,\pi_{\mathrm{SU}_2})$ corresponding to $\mathrm{SO}_2$ and the toric Poisson manifold $(\mathbb{C}P^{1},\pi_{\mathbb{C}P^{1}})$ arising from $\mathbb{C}^{2}\diagdown \{0\}$. Then a degree $k \in \mathbb{Z}$ homomorphism $z \mapsto z^k$ determines Poisson submersions with Poisson fibres
\begin{align*}
 & \mathrm{SU}_2 \times_{\mathrm{SO}_2} (\mathrm{SU}_2/\mathrm{SO}_2) \to \mathrm{SU}_2/\mathrm{SO}_2, & \mathrm{SU}_2 \times_{\mathrm{SO}_2} \mathbb{C}P^{1} \to \mathrm{SU}_2/\mathrm{SO}_2
\end{align*}which have locally trivial foliation (and have respectively four and six leaves).
\end{example}

\begin{proposition}\label{pro: toric base finite} Let $p:(P_\Delta,\pi_P)\to (M_\Delta,\pi_M)$ be 
the ($N_\C$-principal) GIT presentation of the Poisson toric manifold $M_\Delta$, and let $(X,\pi_X)$ be either:

--- a manifold of full flags $G'/T'$, or

--- a toric Poisson manifold $T'_\C \curvearrowright M'_\Delta$ whose Poisson structure is induced by a totally real bivector. 

Then a group homomorphism $N \to T'$ determines
on the associated bundle $\Sigma=P_\Delta\times_N X$
a Poisson structure $\pi_{\Sigma}$  with a finite number of symplectic leaves.
\end{proposition}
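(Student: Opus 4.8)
The plan is to deduce the statement from Proposition~\ref{pro : zero or isotropic fiber associated}, which identifies the leaf space of a suitable associated bundle with a product of leaf spaces. Granting its hypotheses, the conclusion is immediate: the base $(M_\Delta,\pi_M)$ is a toric Poisson manifold, so by the nondegeneracy of $\pi$ (and the remark following Proposition~\ref{pro : positive GIT}) its leaves are the finitely many $T_\C$-orbits; and the fibre $(X,\pi_X)$ likewise has finitely many leaves --- the Bruhat cells of $G'/T'$ in the first case (Proposition~\ref{pro : Lu-Weinstein}), the $T'_\C$-orbits of $M'_\Delta$ in the second. Since Proposition~\ref{pro : zero or isotropic fiber associated} produces a homeomorphism $\Sigma/\mathcal{S}_\Sigma \simeq M_\Delta/\mathcal{S}_M \times X/\mathcal{S}_X$, the leaf space of $\pi_\Sigma$ is finite. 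All the work is therefore in verifying the hypotheses of that proposition, the principal bundle $p:(P_\Delta,\pi_P)\to(M_\Delta,\pi_M)$ being coregular by Proposition~\ref{pro : positive GIT}.

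In both cases I would take the acting group to be the compact torus $N$, via the homomorphism $N\to T'$, rather than its complexification. For flag fibres this is forced already at the level of the Poisson-diffeomorphism requirement: only the compact $T'$ preserves the Bruhat--Poisson structure (the non-compact directions of $T'_\C$ rescale the leafwise symplectic forms), and it does so because inversion on $G'$ is anti-Poisson, as in Proposition~\ref{pro: bruhat base finite}. For toric fibres $T'_\C$ does act by Poisson diffeomorphisms (the structure $\pi_X$ being $(\C^\times)^{d'}$-invariant by construction), but its orbits fill out the symplectic leaves and so are not isotropic, so one must again descend to $N$ to gain the isotropy needed below. To present $\Sigma$ with this compact structure group I would use that $P_\Delta=\Sigma_\Delta$ is the complexification $\mu_N^{-1}(0)\times_N N_\C$ of the principal $N$-bundle $\mu_N^{-1}(0)\to M_\Delta$, so that $P_\Delta\times_N X=\mu_N^{-1}(0)\times_N X$.

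Because the fibres of $P_\Delta\to M_\Delta$ do \emph{not} inherit the zero structure --- they are $N$-orbits lying inside the K\"ahler leaves of $\Pi$, on which the restricted symplectic form need not vanish --- hypothesis a) of Proposition~\ref{pro : zero or isotropic fiber associated} is unavailable, and I would verify hypothesis b): that the orbits of $N\curvearrowright X$ are isotropic in the symplectic leaves of $(X,\pi_X)$. The orbits lie inside leaves because $X$ has finitely many leaves and $N$ acts by Poisson diffeomorphisms. Isotropy is where the hypotheses on $X$ enter: on each leaf the compact torus acts in Hamiltonian fashion, and an abelian Hamiltonian action has isotropic orbits, since the leafwise symplectic pairing of two fundamental vector fields is the Poisson bracket of the corresponding moment components and hence vanishes. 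For flag fibres the $T'$-action on each Bruhat cell is Hamiltonian automatically; for toric fibres this is precisely the force of the totally real assumption, which makes the compact torus orbits isotropic in the K\"ahler leaves (mirroring the isotropy of the compact torus orbits in $(\C^{d'},\omega_{\mathrm{std}})$, whose bivector is totally real).

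The main obstacle is the structure-group reconciliation: establishing that $\mu_N^{-1}(0)$ is a coregular submanifold of $(\C^d,\Pi)$ whose induced $N$-invariant Poisson structure makes $\mu_N^{-1}(0)\to M_\Delta$ a coregular principal Poisson submersion inducing exactly $\pi_\Sigma$ on the associated bundle. An alternative that localizes this difficulty is to argue over leaves, as in the proof of Proposition~\ref{pro : zero or isotropic fiber associated}: over each symplectic leaf $S_M=T_\C\cdot z$ the restriction $P_\Delta|_{S_M}\to S_M$ is a coregular coupling submersion which, being $N_\C$-invariant, is complete and hence strongly locally trivial (Example~\ref{ex: complete}); reducing to the compact torus over the symplectic base and applying Lemma~\ref{lem : SLT principal has SLT associated} yields a fibre model $\widetilde{\pi}_X=\wedge^2\rho(\pi_{N,e})+\pi_X$, whose isotropy forces it to induce the same foliation $\mathcal{S}_X$ as $\pi_X$. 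Summing over the finitely many leaves $S_M$ --- each a Poisson submanifold of $\Sigma$ by Theorem~\ref{thm : restriction of PD submersion over leaf} --- then gives the finiteness directly.
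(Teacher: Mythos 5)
Your argument is, at its core, the paper's: invoke Proposition~\ref{pro : zero or isotropic fiber associated} through hypothesis b), observe that hypothesis a) is unavailable because the fibres of $p:P_\Delta\to M_\Delta$ carry symplectic (not zero) induced structures, and reduce everything to the isotropy of the compact torus orbits in $(X,\pi_X)$ --- via the totally real condition for toric fibres (exactly as in the paper) and via Hamiltonian-ness of the $T'$-action on Bruhat cells for flag fibres. On that last point the paper simply cites Lu's charts, in which the leafwise symplectic form is a product of area forms and $T'$ acts by coordinatewise rotations; your more abstract route (abelian Hamiltonian actions have isotropic orbits) is valid, but the word ``automatically'' conceals the two facts that make it run: each Bruhat cell is contractible, so the symplectic $T'$-action admits moment components, and it contains a $T'$-fixed point, which forces the locally constant function $\omega(X_\xi,X_\eta)=\{f_\xi,f_\eta\}$ to vanish. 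Either way, the isotropy is established, and the finiteness of the leaf space then follows from the homeomorphism $\Sigma/\mathcal{S}_\Sigma\simeq M_\Delta/\mathcal{S}_M\times X/\mathcal{S}_X$ as you say.

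The one place where your write-up stops short of a proof is the ``main obstacle'' you flag and leave open: the reconciliation between the $N_\C$-principal bundle $P_\Delta\to M_\Delta$ and the compact group whose orbits you need to be isotropic. Your proposed fix --- restricting to $\mu_N^{-1}(0)$ --- manufactures a genuinely awkward question (what Poisson structure does the level set $\mu_N^{-1}(0)\subset(\C^d,\Pi)$ inherit, and is $\mu_N^{-1}(0)\to M_\Delta$ then a coregular Poisson submersion?) which neither you nor your leaf-by-leaf fallback actually answers. The detour is unnecessary. Keep $N_\C$ as the structure group and let it act on $X$ through the composite $N_\C\twoheadrightarrow N\to T'$, where $N_\C\to N$ is the projection attached to the decomposition $N_\C\cong N\times\exp(i\mathfrak{n})$ of the abelian group $N_\C$ (this projection is a group homomorphism precisely because $N_\C$ is abelian). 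Then $\Sigma=P_\Delta\times_{N_\C}X$ is the associated bundle of the coregular, $N_\C$-invariant Poisson submersion furnished by Proposition~\ref{pro : positive GIT}; the $N_\C$-orbits in $X$ are exactly the orbits of the compact image of $N\to T'$, which act by Poisson diffeomorphisms and are the orbits whose isotropy you verified; and Proposition~\ref{pro : zero or isotropic fiber associated} applies verbatim. With that substitution your argument closes, and the alternative leaf-by-leaf argument becomes superfluous.
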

\begin{proof}
To apply Proposition \ref{pro : zero or isotropic fiber associated}, it suffices to show that the orbits of $T'\curvearrowright (X,\pi_X)$ are isotropic. (Note that the fibres of $p:(P_\Delta,\pi_P)\to (M_\Delta,\pi_M)$ are symplectic). 

--- For manifolds of full flags this is a consequence of the (global symplectic) charts in \cite{Lu} for the Bruhat cells: 
the symplectic form splits as a product of area forms and the torus action is induced from a Cartesian product of rotations in the plane. 

--- For toric Poisson manifolds, this is a consequence of the fact that the positive, nondegenerate bivector of which it is a quotient is totally real, and therefor the fixed-point set of complex conjugation --- that is, the Lie algebra of $T'$ --- is Lagrangian. 
\end{proof}

\begin{example}\normalfont
Consider the Poisson-Lie group $(\mathrm{SU}_2,\pi_{\mathrm{SU}_2})$ corresponding to $\mathrm{SO}_2$ and a totally real toric Poisson manifold $(\mathbb{C}P^{1},\pi_{\mathbb{C}P^{1}})$ arising from $\mathbb{C}^{2}\diagdown \{0\}$. Then a degree $k \in \mathbb{Z}$ homomorphism $z \mapsto z^k$ determines Poisson submersions with Poisson fibres
\begin{align*}
 & (\mathbb{C}^{2}\diagdown \{0\}) \times_{\mathrm{SO}_2} (\mathrm{SU}_2/\mathrm{SO}_2) \to \mathbb{C}P^{1}, & (\mathbb{C}^{2}\diagdown \{0\}) \times_{\mathrm{SO}_2} \mathbb{C}P^{1} \to \mathbb{C}P^{1}
\end{align*}which have locally trivial foliation (and have respectively six and nine leaves). The diffeomorphism type of these associated bundles (and those in Example \ref{ex : flag base}) is determined by the parity of $k$, and it would be interesting to understand in which cases the Poisson structures with the same number of
symplectic leaves are Poisson diffeomorphic.  
\end{example}

\section{Appendix}

Symplectic leaves of a Poisson manifold $(M,\pi)$ are not in general embedded submanifolds. However, because of the Weinstein splitting, the following is true: for each $x \in M$, there exists a diffeomorphism
\begin{align*}
    \varphi : U \diffto V \times W
\end{align*}
where $U$ is an open, connected neighborhood $U$ of $x$ in $M$, $V$ is an open, connected neighborhood of $x$ in $\mathrm{S}_M(x)$, and $x \in W \subset M$ is a connected submanifold transverse to $V$, with the property that
\begin{align*}
    & \varphi|_{V}=\mathrm{id}_V, & \varphi|_{W}=\mathrm{id}_W, 
\end{align*}and, for each leaf $\mathrm{S}_M(y)$ of $\pi$,
\begin{align*}
    & \varphi(U \cap \mathrm{S}_M(y)) = V \times \Lambda(y), & \Lambda(y) \subset W,
\end{align*}with $\Lambda(x)$ being at most countable. This suggests the notion of a \emph{leaf-like submanifold} (see \cite[Appendix B]{PT1.875}), which includes as examples leaves of singular foliations or Lie algebroids:

\begin{definition}
    A subset $S$ of a smooth manifold $M$ is a {\bf leaf-like submanifold} if, around each point $x \in S$ there is an open neighborhood $U$ of $x$ in $M$, together with a diffeomorphism \[\phi : U \diffto V \times W\]
    from $U$ into the product of connected manifolds $V$ and $W$, such that \[\phi(U \cap S) = V \times \Lambda\]
    for a subset $\Lambda \subset W$ which is at most countable.
\end{definition}
The submanifolds $S^U_w:=\phi^{-1}(V \times \{w\}) \subset U \cap S$, as $w$ ranges in $\Lambda$, are called the {\bf plaques} of $S$ over $U$. Each plaque is an embedded submanifold, and plaques partition $U \cap S$:
\begin{align*}
        U \cap S = \coprod_{w \in \Lambda} S^U_w.
    \end{align*}

As explained in \cite[Appendix B]{PT1.875}, leaf-like manifolds are \emph{initial} submanifolds --- that is, they are abstract manifolds equipped with an injective immersion $j:S \to M$, with the property that, if $f:N \to M$ is a smooth map whose image lies in $S$, then the unique set-theoretic map $\widetilde{f}:N \to S$ through which $f$ factors is smooth. This implies that the differentiable structure on $S$ in uniquely determined by that of $M$.

\begin{definition}\label{def : clean}
    Let $X \subset M$ be an embedded submanifold, and $S \subset M$ be a leaf-like submanifold. We say that $X$ and $S$ {\bf intersect cleanly} if 
\begin{enumerate}[a)]
 \item $X \cap S$ is an embedded submanifold of $S$;
 \item $T(X \cap S)=TX \cap TS$.
\end{enumerate}
\end{definition}

\begin{remark}\label{rem : second countable}\normalfont
    Condition a) in Definition \ref{def : clean} should be clarified. Because manifolds are supposed to be second-countable, they can have at most countably many connected components. Hence, when we say $X \cap S$ is a submanifold, it is implied that $X \cap S$ is the disjoint union of \underline{countably many} connected (second-countable) submanifolds. However, we do allow connected components to have \underline{different dimensions} (as in Example \ref{ex : helicoid}).
    \end{remark}


This definition recovers the notion of clean intersection of manifolds \cite[Appendix C]{Hor} when $S$ is also embedded. 

\begin{remark}\normalfont
    In Definition \ref{def : clean}, the notions of embedded- and leaf-like submanifolds play asymmetric roles, in that we do not require that the intersection $X \cap S$ be a leaf-like submanifold of $X$. As the example below shows, that need not always be the case.
\end{remark}

\begin{example}\label{ex : helicoid}\normalfont Let $M$ be the 3-torus endowed with a  Kronecker-type foliation, meaning that every one-dimensional leaf is dense. Select a leaf $S$ and fix a foliated chart $\varphi:U\to \R^3$ so that the vertical axis corresponds to a plaque $S_i$ belonging to $S$. Define $X\subset U$ to be the preimage by $\varphi$ of the helicoid in $\R^3$ with axis the vertical axis:
\begin{align*}
    & \mathbb{R}^2 \to \mathbb{R}^3, & (t,s) \mapsto (t\cos(s),t\sin(s),s)\end{align*}
Then the intersection of $X$ and $S$ is clean and consists of $S_i$ and a countable collection of points which accumulate in $S_i$. Therefore $X\cap S$ is not a leaf-like submanifold.
\end{example}

In contrast, under Definition \ref{def : clean}, a clean intersection of an embedded submanifold with a leaf-like submanifold is always an initial submanifold of the former:

\begin{lemma}\label{lem : app : leaf like clean is initial}
    If an embedded submanifold $X \subset M$ intersects a leaf-like submanifold $S \subset M$ cleanly, then $X \cap S$ is an initial submanifold of $X$.
\end{lemma}
\begin{proof}
    By hypothesis, $X \cap S$ is an embedded submanifold of $S$. Denote by
\begin{align*}
    && i:X \to M, && j:S \to M, && i':X \cap S \to S, && j':X \cap S \to X
\end{align*}the implied immersions. Because $S$ is leaf-like, $j$ is initial, and because $X$ is embedded, $i$ and $i'$ are initial. Consider a smooth map $f:N \to X$, whose image lies inside $X \cap S$. Then the image of the composition $i \circ f:N \to M$ with the inclusion $i:X \to M$ lies inside the image of the inclusion $j:S \to M$, and the latter, being an initial submanifold, has a smooth lift $\widetilde{f}:N \to S$, such that
\begin{align*}
    i \circ f = j \circ \widetilde{f}.
\end{align*}The image of $\widetilde{f}$ is contained in the embedded submanifold $X \cap S$, which is also initial, and therefore $\widetilde{f}$ has a smooth lift $\widehat{f}:N \to X \cap S$, with
\begin{align*}
    \widetilde{f} = i' \circ \widehat{f}.
\end{align*}
Hence the lift $F:=j'\circ \widehat{f}$ of $f$ is smooth:
\[
\xymatrix{
 X \cap S \ar[dd]_{j'} \ar[rr]^{i'} & & S \ar[dd]^j\\
 & N \ar[dl]^f \ar[ur]_{\widetilde{f}} \ar[ul]_{\widehat{f}}\\
 X \ar[rr]_i & & M
}
\]
and this shows that $j'$ is initial as well.
\end{proof}

\begin{proposition}\label{prop:app-clean}
     Let $X$ be an embedded submanifold, and $S$ be a leaf-like submanifold of a smooth manifold $M$. Then the following assertions are equivalent:
\begin{enumerate}[i)]
\item $X$ and $S$ intersect cleanly.
 \item $X \cap S$ is a disjoint union of  (a priori uncountably many) initial submanifolds $Z$, for which
 \[T_zZ=T_zX\cap T_z S\]
 for each $z\in Z$. 
\item Every $z\in X\cap S$ is the center of a coordinate chart $\varphi : M \supset U \to \R^m$ which is adapted to $X$ and to the plaque of $S$ over $U$ which passes through $z$.
\end{enumerate}
\end{proposition}
\begin{proof}
\noindent \emph{i) implies ii)} By definition of clean intersection, if $X$ and $S$ meet cleanly, then
\begin{align*}
    X \cap S = \coprod Z_i
\end{align*}is a disjoint union of at most countably many embedded, connected submanifolds $Z_i \subset S$, with
\begin{align*}
    & T_zZ_i = T_zX \cap T_zS, & z \in Z_i.
\end{align*}Hence i) (trivially) implies ii).\\

\noindent \emph{ii) implies iii)} Assume now that
\begin{align*}
    X \cap S = \coprod Z_i
\end{align*}is a disjoint union of \emph{possibly uncountably many} connected, initial submanifolds $Z_i \subset M$, with
\begin{align*}
    & T_zZ_i = T_zX \cap T_zS, & z \in Z_i.
\end{align*}
Because $S$ is leaf-like, we can find an open neighborhood $U$ of any $z \in Z_i$, for which
\begin{align*}
    U \cap S = \coprod S_a,
\end{align*}
is a disjoint union of at most countably many plaques. Write $U \cap Z_i$ as a countable disjoint union of its connected components:
\begin{align*}
    U \cap Z_i = \coprod_{A(i)} W_{\beta}.
\end{align*}Then $U \cap X \cap S_a$ is a union of such connected components. Let $S_0$ denote the plaque of $S$ over $U$ which passes through $z$, and $W_0$ the connected component of $U \cap X \cap S_0$ through $z$. Then $T_zW_0=T_zX \cap T_zS_0$ ensures that one can build as in \cite[Proposition C.3.1]{Hor} a coordinate chart $(U,\varphi)$ of $M$ which is centered at $z$ and is adapted to both $X$ and $S_0$. So ii) implies iii). \\

\noindent \emph{iii) implies i)} If around each $z \in X \cap S$ one can find a chart $\varphi : M \supset U \to \mathbb{R}^m$ which is centered at $z$, and
\begin{align*}
    & \varphi(U \cap X) = \varphi(U) \cap W_X, & \varphi(U \cap S_0) = \varphi(U) \cap W_S
\end{align*}where $S_0$ denotes the plaque of $S$ over $U$ through $z$, and $W_X$ and $W_S$ denote vector subspaces of $\mathbb{R}^m$, then
\begin{align*}
    & \varphi(U \cap X \cap S_0) = \varphi(U) \cap W_X \cap W_S
\end{align*}shows that 
\begin{align*}
    \varphi|_{U \cap S_0}:U \cap S_0 \to W_S
\end{align*}is a coordinate chart of $S$ adapted to $U \cap X \cap S_0$. This shows that $X \cap S$ is an embedded submanifold of $S$, and moreover,
\begin{align*}
    && z \in X \cap S && \Longrightarrow && T_z(X \cap S) = T_zX \cap T_zS. 
\end{align*}Therefore iii) implies i).
\end{proof}

\begin{remark}\normalfont
The following subtlety in the formulation of Proposition \ref{prop:app-clean} is worth mentioning : we raised in item ii) the \emph{a priori} possibility that $X\cap S$ be the disjoint union of \emph{uncountably} many submanifolds --- and therefore \emph{not} itself a submanifold, see Remark \ref{rem : second countable} --- only to rule out that possibility by concluding that $X \cap S$ must in fact be an embedded submanifold of $S$. 
\end{remark}


\end{document}